\newcommand*{\R}{\mathbb{R}}
\newcommand{\cA}{\mathcal{A}}
\newcommand{\cM}{\mathcal{M}}
\newcommand{\cP}{\mathcal{P}}
\newcommand{\be}{\begin{equation}}
\newcommand{\ee}{\end{equation}}
\renewcommand{\d}{\,\mathrm{d}}
\newcommand*{\eqset}{\coloneqq}
\newcommand{\piv}{\ensuremath{\mathrm{piv}}}
\DeclareMathOperator*{\argmin}{argmin}
\DeclareMathOperator{\spt}{spt}
\DeclareMathOperator{\diam}{diam}
\DeclareMathOperator{\id}{id}
\DeclareMathOperator{\inter}{int}
\numberwithin{equation}{section}
\theoremstyle{plain}
\newtheorem{theorem}{Theorem}[section]
\newtheorem{corollary}[theorem]{Corollary}
\newtheorem{proposition}[theorem]{Proposition}
\newtheorem{lemma}[theorem]{Lemma}
\theoremstyle{remark}
\newtheorem{remark}[theorem]{Remark}
\newtheorem{example}[theorem]{Example}
\newcommand{\tfi}{\widetilde{\varphi}}
\newcommand{\eps}{\varepsilon}
\newcommand{\tB}{\widetilde{B}}
\newcommand{\tA}{\widetilde{A}}
\newcommand{\iK}{\inter(K)}
\newcommand{\mui}{\mu^{\inter}}
\newcommand{\mub}{\mu^{\mathrm{bd}}}
\title[Monge-Kantorovich interpolation and parking problem]{Monge-Kantorovich interpolation with constraints and application to a parking problem}
\author{Giuseppe Buttazzo 
\and
Guillaume Carlier
\and
Katharina Eichinger 
}
\date{August 2023}
\begin{document}
\maketitle

\begin{abstract}
We consider optimal transport problems where the cost for transporting a given probability measure $\mu_0$ to another one $\mu_1$ consists of two parts: the first one measures the transportation from $\mu_0$ to an intermediate (pivot) measure $\mu$ to be determined (and subject to various constraints), and the second one measures the transportation from $\mu$ to $\mu_1$. This leads to Monge-Kantorovich interpolation problems under constraints for which we establish various properties of the optimal pivot measures $\mu$. Considering the more general situation where only some part of the mass uses the intermediate stop leads to a mathematical model for the optimal location of a parking region around a city. Numerical simulations, based on entropic regularization, are presented both for the optimal parking regions and for Monge-Kantorovich constrained interpolation problems. 
\end{abstract}

\bigskip
\textbf{Keywords:} optimal transport, Monge-Kantorovich distance, measure interpolation, optimal parking regions.

\medskip 
\textbf{2010 Mathematics Subject Classification:} 49Q22, 49J45, 49M29, 49K99.

\section{Introduction}\label{sec:intro}

We consider optimal transport problems where a given probability measure $\mu_0$ in $\R^d$ has to be transported to a given probability measure $\mu_1$ with minimal transportation cost. This cost consists of two parts: the first one measures the transportation from $\mu_0$ to an intermediate measure $\mu$, to be determined, and the second one measures the transportation from $\mu$ to $\mu_1$. This situation occurs in some applications, where the transport of $\mu_0$ to $\mu_1$ is not directly made but the possibility of an intermediate stop is taken into account. The two parts are described by the Monge-Kantorovich functionals $W_{c_0}(\mu_0,\mu)$ and $W_{c_1}(\mu,\mu_1)$ respectively, where for every pair of probabilities $\rho_0$, $\rho_1$ we set
\be\label{defwasser}
W_c(\rho_0,\rho_1)=\inf\Big\{\int_{\R^d\times\R^d} c(x,y)\,\d\gamma(x,y)\ :\ \gamma\in\Pi(\rho_0,\rho_1)\Big\}.
\ee
Here
$$\Pi(\rho_0,\rho_1)\eqset\left\{\gamma\in\cP(\R^d\times\R^d)\ :\ {\pi_i}_\#\gamma=\rho_i,\ i=0,1\right\}$$
is the set of transport plans between $\rho_0$ and $\rho_1$, denoting by $\pi_i:\R^d\times\R^d\to\R^d$ ($i=0,1$) the projections on the first and second factor respectively, ${\pi_i}_\# \gamma$ are the marginals of $\gamma$, so that a probability measure $\gamma$ on $\R^d\times \R^d$ belongs to $\Pi(\rho_0,\rho_1)$ when
$$\begin{cases}
{\pi_0}_\#\gamma(A)=\gamma(A\times\R^d)=\rho_0(A),\\
{\pi_1}_\#\gamma(A)=\gamma(\R^d\times A)=\rho_1(A)
\end{cases}\qquad\text{for all Borel set }A\subset\R^d.$$
Some extra constraints on the {\it pivot} measure $\mu$ can be added, as for instance:
\begin{itemize}
\item location constraints, where the support of $\mu$, $\spt\mu$, is required to be contained in a given region $K\subset\R^d$;
\item density constraints, where the measure $\mu$ is required to be absolutely continuous and with a density not exceeding a prescribed function $\phi$.
\end{itemize}

 Without additional constraint on the measure $\mu$, the minimization of $W_{c_0}(\mu_0,.)+ W_{c_1}(., \mu_1)$, or its generalizations to more than two prescribed measures, arise in different applied settings such as multi-population matching \cite{ce} or Wasserstein barycenters \cite{AC}. In particular, in the quadratic case where $c_0(x,y)=c_1(x,y)=\vert x-y\vert^2$, minimizers of $W_{c_0}(\mu_0,.)+ W_{c_1}(., \mu_1)$ are the midpoints of McCann's displacement interpolation \cite{McCann} between $\mu_0$ and $\mu_1$ i.e. geodesics for the quadratic Wasserstein metric\footnote{As kindly pointed out to us by a referee, naming optimal transport distances after Wasserstein is controversial and historically incorrect, eventhough the use of the name is widely spread in the literature, we preferred to mostly use Monge-Kantorovich instead in the present paper.}. Density constraints are important to model congestion effects as in the seminal crowd motion model of Maury, Roudneff-Chupin and Santambrogio \cite{MRS}. A first goal of the present paper is to investigate the effect of location and density constraints on such Monge-Kantorovich interpolation problems. Let us also mention that the minimization of $W_{c_0}(\mu_0, \mu)$ with respect to $\mu$ in a class of measures which are singular with respect to $\mu_0$ was addressed in \cite{BCL} whereas the parallel case where the density constraint appears in the definition of congestion penalization for singular measures was studied in \cite{LMSS} and \cite{Sarrazin}.

\smallskip

A second goal of the paper is to investigate a more general class of problems as a mathematical model for the optimal location of a parking region around a city. In this context, one is given two probability measures $\nu_0$ and $\nu_1$, which may be interpreted as a distribution of residents and a distribution of services respectively. A resident living at $x_0$ reaching a service located at $x_1$ may either walk directly to $x_1$ for the cost $c_1(x_0, x_1)$ or drive to an intermediate parking location $x$ and then walk from $x$ to $x_1$ paying the sum $c_0(x_0,x)+c_1(x, x_1)$. In this model, detailed in Section \ref{sec:parking}, the pivot/parking measure $\mu$ may have total mass less than $1$, and one may decompose $\nu_0$ and $\nu_1$ as $\nu_i=\nu_i-\mu_i+ \mu_i$ with $0 \leq \mu_i \leq \nu_i$ denoting the \emph{driving} part of $\nu_i$ and the unknowns $\mu_0$, $\mu$ and $\mu_1$ (with same total mass) should minimize the overall cost $W_{c_1}(\nu_0-\mu_0, \nu_1-\mu_1)+ W_{c_0}(\mu_0, \mu)+ W_{c_1}(\mu, \mu_1)$ subject to possible additional location and density constraints on $\mu$. Let us remark that if $(\mu_0, \mu_1, \mu)$ solves this parking problem, then $\mu$ solves the corresponding Monge-Kantorovich interpolation problem i.e. minimizes $W_{c_0}(\mu_0, .)+ W_{c_1}(., \mu_1)$ so that the qualitative properties established in Sections \ref{sec:distance} and \ref{sec:convex} will be directly applicable to optimal parking measures. We have chosen, as an application of our results, a model for determining the optimal location of parking areas around a city, but other models in different fields use similar frameworks and can be found in the literature: we quote for instance \cite{PR} and \cite{LPR}, where the transport between singular measures is used to model the behavior of biological membranes.
 
\smallskip

In Section \ref{sec:wasser}, we consider the general optimization problem \eqref{eq:transportwcar} and after solving an explicit example, we prove existence and discuss uniqueness of solutions. Dual formulations are introduced in Section \ref{sec:alter-form}. In Section \ref{sec:distance}, the particular case of distance-like costs is studied, while Section \ref{sec:convex} deals with the case of strictly convex cost functions, in these sections we study various qualitative properties of the solutions, in particular their integrability. 
In Section \ref{sec:parking}, we study a problem related to the optimization of a parking area. Finally, in Section \ref{sec:numeric}, we present some numerical simulations thanks to an entropic approximation scheme and compare the solutions of interpolation and parking problems.

\section{Monge-Kantorovich interpolation with constraints}\label{sec:wasser}

Let $\mu_0,\mu_1\in\cP(\R^d)$ be two probabilities with compact support, and let $c_0,c_1:\R^d\times\R^d\to\R_+$ be two continuous cost functions. For a class $\cA\subset\cP(\R^d)$ we are interested in solving the optimization problem
\be\label{eq:transportwcar}
\inf\big\{W_{c_0}(\mu_0,\mu)+W_{c_1}(\mu,\mu_1)\ :\ \mu\in\cA\big\}.
\ee
Here $W_{c_i}(\rho_0,\rho_1)$ denotes the value of the optimal transport problem between two measures $\rho_0,\rho_1\in\cP(\R^d)$, obtained by means of the Monge-Kantorovich functionals defined in \eqref{defwasser}. In order to simplify the presentation, by an abuse of notation, if $\rho$ is a measure and $\phi$ is a nonnegative Lebesgue integrable function, by $\rho\le\phi$ we mean that $\rho$ is is absolutely continuous and its density, again denoted $\rho$, satisfies $\rho \le\phi$ Lebesgue a.e. ; also, all the integrals with no domain of integration explicitly defined are intended on the whole $\R^d$.

Typical cases for the class $\cA$ of admissible choices are:
\begin{enumerate}[(i)]
\item no constraint, that is $\cA=\cP(\R^d)$;\label{case:Noconstraints}
\item location constraints, that is $\cA=\cP(K)$ for a nonempty compact subset $K$ of $\R^d$;\label{case:Location}
\item density constraints, that is $\cA=\{\rho\in\cP_{\mathrm{ac}}(\R^d):~\rho\le\phi\}$ for an $L^1$-function $\phi: \R^d\to\R_+$ with compact support and $\int\phi\,dx>1$.\label{case:Densityconstr}
\end{enumerate}

\subsection{Explicit one-dimensional examples}

Before going to the general case, let us illustrate our problem in a simple one-dimensional case, where optimal solutions can be easily recovered by explicit computations.

\begin{example}
Consider the one-dimensional case and the measures
$$\mu_0(x)=\mathds{1}_{[0,1]}(x),\qquad\mu_1(x)=\mathds{1}_{[5,6]}(x).$$
We first look at the case where the cost functions are given by distances:
$$c_0(x,y)=(1-t)|x-y|,\qquad c_1(x,y)=t|x-y|\qquad\text{ with }t\in]0,1[.$$
The following results can be easily seen by rephrasing the problem in terms of the distribution functions $f,f_0,f_1$ of the probabilities $\mu,\mu_0,\mu_1$ (see for instance Chapter 2 of \cite{san}):
$$\min\left\{\int_0^6(1-t) |f_0-f|+t|f-f_1|\,dx\ :\ f\text{ nondecreasing, }f(0)=0,\ f(6)=1\right\}$$
with the constraints
\begin{enumerate}[(i)]
\item no additional constraint;
\item $\spt f'\subset[2,4]$;
\item $f'\le\theta\mathds{1}_{[2,4]}$.
\end{enumerate}
Since $f_1 \le f_0$, it is easy to see that in the minimization above, one can always assume that $f_1\le f\le f_0$ and then remove the absolute values and minimize under the constraint that $f$ is nondecreasing and $f_1\le f\le f_0$. We then have:
\begin{enumerate}[(i)]
\item In the absence of constraints, this becomes the problem of finding the Wasserstein median between $\mu_0$ and $\mu_1$ (see \cite{CCE} for more on Wasserstein medians). In particular, the optimal solutions $\mu$ are characterized as follows:
\begin{itemize}
\item if $t>1/2$ (respectively $t<1/2$), the unique solution is given by $\mu=\mu_1$ (respectively $\mu=\mu_0$);
\item if $t=1/2$, any probability $\mu$ whose distribution function $f$ is between the two distribution functions $f_0$ and $f_1$ of $\mu_0$ and $\mu_1$, in the sense that
$$f_1(x)\le f(x)\le f_0(x)\qquad\text{for all }x\in\R,$$
is a minimizer.
\end{itemize}
\item In the case of the location constraint $K=[2,4]$, we observe a similar threshold effect:
\begin{itemize}
\item if $t>1/2$ (respectively $t<1/2$), the unique solution is given by $\mu=\delta_4$ (respectively $\mu=\delta_2$);
\item if $t=1/2$, then any probability measure supported on $K$ is a solution.
\end{itemize}
\item In the case of density constraint $\phi(x)\eqset\theta\mathds{1}_{[2,4]}(x)$ with $\theta>1/2$ we have:
\begin{itemize}
\item if $t>1/2$ (respectively $t<1/2$), the unique solution is given by $\mu=\theta\mathds{1}_{[4-1/\theta,4]}$ (respectively $\mu=\theta\mathds{1}_{[2,2+1/\theta]}$);
\item if $t=1/2$ any probability measure satisfying the constraint is a solution.
\end{itemize}
\end{enumerate}

The example above relies on the fact that for distance-like costs, optimality somehow forces the triangular inequality to be saturated in dimension 1. We will investigate this phenomenon further in Section \ref{sec:distance}.

We consider now strictly convex cost functions: as a prototype we take, with the same measures $\mu_0$ and $\mu_1$ above,
$$c_0(x,y)=(1-t)|x-y|^2,\quad c_1(x,y)=t|x-y|^2\qquad\text{with }t\in(0,1).$$
Also this case can be rephrased in terms of the so-called {\it pseudo-inverse} $g,g_0,g_1$ (see for instance Definition 2.1 of \cite{san}) of the distribution functions $f,f_0,f_1$ as:
$$\min\left\{\int_0^1(1-t)(g-g_0)^2+t(g_1-g)^2ds\ :\ g\text{ nondecreasing}\right\}$$
with the constraints
\begin{enumerate}[(i)]
\item no additional constraint;
\item $g([0,1])\subset[2,4]$;
\item $g'\ge1/\theta$ and $g([0,1])\subset[2,4]$.
\end{enumerate}
This implies:
\begin{enumerate}[(i)]
\item In the unconstrained case the solution simply corresponds to the Wasserstein-geodesic from $\mu_0$ to $\mu_1$ at time $t\in(0,1),$ or equivalently the weighted barycenter. It is given by
$$\mu_t(x)\eqset\mathds{1}_{[5t,1+5t]}(x).$$
\item Take the constraint $K=[2,4]$, as above. Here the solution depends on the location of the unconstrained geodesic $\mu_t$. We present a few cases (the other ones are clear by symmetry)
\begin{itemize}
\item if $t\le\frac{1}{5}$ the support of $\mu_t$ is contained in $[0,2]$, hence the optimal solution is $\delta_2$;
\item if $\frac{1}{5}<t<\frac{2}{5}$ the optimal solution is $\mu=(2-5t)\delta_2+\mathds{1}_{[2,1+5t]}$;
\item if $\frac{2}{5}\le t\le\frac{3}{5}$ the support of $\mu_t$ is contained in $[2,4]$, hence the solution is simply $\mu_t$.
\end{itemize}
\item Take the function $\phi(x) \eqset \theta \mathds{1}_{[2,4]}(x)$ with $1 > \theta > \frac{1}{2}$. The solution depends again on the location of the unconstrained geodesic $\mu_t.$ We have the following cases (remaining cases are again obtained by symmetry)
\begin{itemize}
\item if $t\le\frac{1}{5}$ the support of $\mu_t$ is contained in $[0,2]$, hence the optimal solution is $\theta\mathds{1}_{[2,2+1/\theta]}$;
\item if $\frac{1}{5}<t<\frac{2}{5}$ the optimal solution is still $\mu = \theta\mathds{1}_{[2,2+1/\theta]}$;
\item if $\frac{2}{5}\le t\le\frac{3}{5}$ the support of $\mu_t$ is contained in $[2,4]$, but by the density constraint $\mu_t$ is not even feasible this time. So the solution is of the form $\theta\mathds{1}_{[a,b]}$ with $2\le a<b\le4$ and $b-a=1/\theta$.
\end{itemize}
\end{enumerate}
\end{example}

\subsection{Reformulation, existence, uniqueness}

Let us now come back to the constrained Monge-Kantorovich interpolation problem \eqref{eq:transportwcar} assuming that the measures $\mu_0$ and $\mu_1$ are compactly supported and the costs $c_0$ and $c_1$ are continuous and nonnegative, then by the direct method one directly gets:

\begin{lemma}
Assume either (ii): $\cA=\cP(K)$ with $K$ nonempty and compact  or (iii) $\cA:=\{\rho\in\cP(\R^d)\ :\ \rho\le\phi\}$ with $\phi\in L^1$ compactly supported and $\int\phi\d x\ge1$. Then problem \eqref{eq:transportwcar} admits a solution.
\end{lemma}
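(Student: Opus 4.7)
The plan is to apply the direct method in the narrow (weak-$*$) topology on $\cP(\R^d)$. Let $(\mu_n)$ be a minimizing sequence in $\cA$. In case (ii) every $\mu_n$ is supported in $K$, and in case (iii) every $\mu_n$ is supported in the compact set $\spt\phi$; in either case the family is uniformly tight, so by Prokhorov's theorem we may extract a (not relabeled) subsequence converging narrowly to some $\bar\mu\in\cP(\R^d)$.

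Next I would check that $\bar\mu\in\cA$. In case (ii) this is immediate, since the support of a narrow limit is contained in any closed set containing the supports, so $\bar\mu\in\cP(K)$. In case (iii), the inequality $\mu_n\le\phi$ is equivalent to $\int\psi\,\d\mu_n\le\int\psi\,\phi\,\d x$ for every $\psi\in C_c(\R^d,\R_+)$; passing to the limit gives the same inequality with $\bar\mu$ in place of $\mu_n$, hence $\bar\mu\le\phi\cdot\mathcal{L}^d$ as measures, i.e.\ $\bar\mu$ is absolutely continuous with density bounded by $\phi$.

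Finally, I would invoke the standard lower semicontinuity of Wasserstein functionals under narrow convergence of the marginals: since $c_0,c_1$ are continuous and nonnegative and all the relevant measures sit in a fixed compact set, optimal plans $\gamma_n^{(0)}\in\Pi(\mu_0,\mu_n)$ and $\gamma_n^{(1)}\in\Pi(\mu_n,\mu_1)$ form tight families in $\cP(\R^d\times\R^d)$. Extracting narrow limits gives admissible plans $\gamma^{(0)}\in\Pi(\mu_0,\bar\mu)$ and $\gamma^{(1)}\in\Pi(\bar\mu,\mu_1)$ (the marginal conditions pass to the limit since $\mu_0,\mu_1$ are fixed and $\mu_n\rightharpoonup\bar\mu$), and nonnegativity plus continuity of $c_i$ yield $\int c_i\,\d\gamma^{(i)}\le\liminf_n\int c_i\,\d\gamma_n^{(i)}$. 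Summing gives
\[
W_{c_0}(\mu_0,\bar\mu)+W_{c_1}(\bar\mu,\mu_1)\le\liminf_n\bigl(W_{c_0}(\mu_0,\mu_n)+W_{c_1}(\mu_n,\mu_1)\bigr),
\]
so $\bar\mu$ attains the infimum.

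The only mildly delicate step is the narrow closedness of the density-constraint set in case (iii); the test-function argument above handles it cleanly and avoids any appeal to Dunford--Pettis. Everything else is routine compactness-plus-lower-semicontinuity.
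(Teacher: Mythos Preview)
Your argument is correct and is precisely the direct method the paper invokes; the paper's one-line proof (``optimize over probabilities on a fixed compact set the sum of two weakly-$*$ lsc Wasserstein terms'') is exactly what you have unpacked, including the tightness from compact support and the narrow closedness of the density constraint.
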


\begin{proof}
In both cases, one is left to optimize over probabilities over a fixed compact set, the sum of two Monge-Kantorovich terms which are weakly* lsc.
\end{proof}

In the unconstrained case where $\cA:=\cP(\R^d)$, one of course needs some coercivity in the problem. We shall therefore assume that there exists a compact subset of $\R^d$, denoted (again) by $K$, such that for every $(x_0, x_1)\in\spt(\mu_0)\times\spt(\mu_1)$ one has
\be\label{eq:coerciv-c}
\argmin_{x\in\R^d}\{c_0(x_0,x)+c_1(x,x_1)\}\mbox{ is nonempty and included in $K$}.
\ee
We then define, for $(x_0,x_1)\in\spt(\mu_0)\times\spt(\mu_1)$
$$c(x_0,x_1):=\inf_{x\in\R^d}\{c_0(x_0,x)+c_1(x,x_1)\}=\min_{x\in K}\{c_0(x_0,x)+c_1(x,x_1)\}.$$
In the following proposition, we show that the optimization problem \eqref{eq:transportwcar}, with $\cA=\cP(\R^d)$, is equivalent to the standard transport problem with cost $c$:
\be\label{eq:InfCostF}
\inf_{\gamma\in\Pi(\mu_0,\mu_1)}\int_{\R^d\times\R^d} c(x_0,x_1)\d\gamma(x_0,x_1)
\ee
which clearly admits a solution, since $c\in C(\spt(\mu_0)\times\spt(\mu_1))$. We easily deduce the existence of a solution to \eqref{eq:transportwcar} when $\cA=\cP(\R^d)$ as well as the fact that all solutions are supported by $K$.

We will denote by $\Pi(\mu_0,\mu,\mu_1)$ the set of transport plans in the variables $(x_0,x,x_1)$ with marginals $\mu_0,\mu,\mu_1$, and we denote by $\pi_{0,\piv}$, $\pi_{\piv,1}$, $\pi_{0,1}$ the projections on the first and second, second and third, first and third factors respectively.

\begin{proposition}\label{prop:MultimargForm}
Assume \eqref{eq:coerciv-c}.
\begin{itemize}
\item  Let $\gamma\in\Pi(\mu_0,\mu_1)$ solve \eqref{eq:InfCostF} and let $T:\spt(\mu_0)\times\spt(\mu_1)\to\R^d$ be measurable and such that
\[T(x_0,x_1)\in\argmin_{x\in K}\{c_0(x_0,x)+c_1(x,x_1) \}\qquad\forall(x_0,x_1)\in\spt(\mu_0)\times\spt(\mu_1).\]
Then   $T_\#\gamma$ is a solution of \eqref{eq:transportwcar} with $\cA=\cP(\R^d)$ and the optimal values of \eqref{eq:transportwcar} and \eqref{eq:InfCostF} coincide;
\item conversely, for any optimal solution $\mu$ of \eqref{eq:transportwcar}, consider optimal transport plans $\gamma_0\in\Pi(\mu_0,\mu)$ with respect to the cost $c_0$ and $\gamma_1\in\Pi(\mu,\mu_1)$ with respect to the cost $c_1$. Then there exists a plan $\tilde\gamma\in\Pi(\mu_0,\mu,\mu_1)$ with ${\pi_{0,\piv}}_\#\tilde\gamma=\gamma_0$ and ${\pi_{\piv,1}}_\#\tilde\gamma=\gamma_1$ such that ${\pi_{0,1}}_\#\tilde\gamma$ is optimal for \eqref{eq:InfCostF} and $c_0(x_0,x)+c_1(x,x_1)=c(x_0,x_1)$ on $\spt(\tilde\gamma)$ so that $\mu$ is supported by $K$.
\end{itemize}
The previous equivalence also holds between \eqref{eq:transportwcar} with $\cA=\cP(K)$ (with $K$ a given compact subset of $\R^d$) and \eqref{eq:InfCostF} with $c$ given by $c(x_0, x_1)=\min_{x\in K}\{c_0(x_0,x)+c_1(x,x_1)\}$. 
\end{proposition}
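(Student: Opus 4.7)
The plan is to sandwich the optimal values of \eqref{eq:transportwcar} and \eqref{eq:InfCostF} against each other via the gluing lemma and the pointwise inequality $c(x_0,x_1)\le c_0(x_0,x)+c_1(x,x_1)$ for all $x\in\R^d$ (with equality exactly on the $\argmin$ set, which by \eqref{eq:coerciv-c} is contained in $K$). The only nontrivial ingredients are the gluing lemma and the existence of a measurable selector $T$, both standard in this compact and continuous setting; I do not anticipate any genuine obstacle beyond chaining inequalities through a multi-marginal plan.

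\emph{Upper bound.} Given an optimal $\gamma\in\Pi(\mu_0,\mu_1)$ for \eqref{eq:InfCostF} and the selector $T$, I set $\mu:=T^\#\gamma$ and define $\tilde\gamma\in\cP(\R^d\times\R^d\times\R^d)$ as the image of $\gamma$ under $(x_0,x_1)\mapsto(x_0,T(x_0,x_1),x_1)$. Then $\tilde\gamma\in\Pi(\mu_0,\mu,\mu_1)$, so $\pi_{0,\piv}^\#\tilde\gamma$ is admissible for $W_{c_0}(\mu_0,\mu)$ and $\pi_{\piv,1}^\#\tilde\gamma$ for $W_{c_1}(\mu,\mu_1)$, whence
\[
W_{c_0}(\mu_0,\mu)+W_{c_1}(\mu,\mu_1)\le\int\bigl[c_0(x_0,x)+c_1(x,x_1)\bigr]\d\tilde\gamma=\int c(x_0,x_1)\d\gamma,
\]
since $T(x_0,x_1)$ is by construction a minimizer.

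\emph{Lower bound.} Conversely, for any $\nu\in\cP(\R^d)$ and any $\gamma_0\in\Pi(\mu_0,\nu)$, $\gamma_1\in\Pi(\nu,\mu_1)$ optimal for $W_{c_0}$ and $W_{c_1}$, the gluing lemma provides $\tilde\beta\in\Pi(\mu_0,\nu,\mu_1)$ with $\pi_{0,\piv}^\#\tilde\beta=\gamma_0$ and $\pi_{\piv,1}^\#\tilde\beta=\gamma_1$. Since $\pi_{0,1}^\#\tilde\beta\in\Pi(\mu_0,\mu_1)$ and $c\le c_0+c_1$ pointwise,
\[
\inf_{\eta\in\Pi(\mu_0,\mu_1)}\int c\,\d\eta\le\int c\,\d\pi_{0,1}^\#\tilde\beta\le\int[c_0+c_1]\d\tilde\beta=W_{c_0}(\mu_0,\nu)+W_{c_1}(\nu,\mu_1).
\]

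Minimizing over $\nu$ and comparing with the upper bound applied at an optimal $\gamma$ yields equality of the two optimal values and optimality of $T^\#\gamma$. For the second bullet, running the lower-bound construction at an optimal $\mu$ and its $\gamma_0,\gamma_1$ forces all the displayed inequalities to be equalities: hence $\pi_{0,1}^\#\tilde\gamma$ is optimal for \eqref{eq:InfCostF}, the identity $c_0(x_0,x)+c_1(x,x_1)=c(x_0,x_1)$ holds $\tilde\gamma$-a.e. and thus on $\spt(\tilde\gamma)$ by continuity, and assumption \eqref{eq:coerciv-c} forces $x\in K$ on $\spt(\tilde\gamma)$, so $\mu=\pi_{\piv}^\#\tilde\gamma$ is supported in $K$. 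The constrained case $\cA=\cP(K)$ is entirely analogous, working with the restricted cost $c(x_0,x_1):=\min_{x\in K}\{c_0(x_0,x)+c_1(x,x_1)\}$ and a selector $T$ valued in $K$.
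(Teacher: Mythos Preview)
Your proof is correct and follows essentially the same approach as the paper: both use the gluing lemma to pass from a pair of two-marginal plans to a three-marginal plan, exploit the pointwise inequality $c\le c_0+c_1$ (with equality on the $\argmin$ set), and read off the converse by forcing the chain of inequalities to collapse. The only cosmetic difference is that the paper runs the entire chain in one display starting from an arbitrary $\mu$ and ending at $T^\#\gamma$, whereas you split it into separate upper- and lower-bound steps.
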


\begin{proof}
Let $\mu\in\cP(\R^d)$, $\gamma_0\in\Pi(\mu_0,\mu)$ and $\gamma_1\in\Pi(\mu,\mu_1)$; by the gluing Lemma (see Lemma 7.6 in \cite{villatopic}), there is a plan $\tilde\gamma\in\Pi(\mu_0,\mu,\mu_1)$ with ${\pi_{0,\piv}}_\#\tilde\gamma=\gamma_0$ and ${\pi_{\piv,1}}_\#\tilde\gamma=\gamma_1$. Hence, since $\gamma$ solves \eqref{eq:InfCostF} and ${\pi_{0,1}}_\#\tilde\gamma\in\Pi(\mu_0,\mu_1)$, we have
\begin{align*}
\int_{\R^d\times\R^d} c_0\d\gamma_0+\int_{\R^d\times\R^d} c_1\d\gamma_1
&=\int_{\R^d\times\R^d\times\R^d}\left\{c_0(x_0,x)+c_1(x,x_1)\right\}\d\tilde\gamma(x_0,x,x_1)\\
&\ge\int_{\R^d\times\R^d\times} c\d{\pi_{0,1}}_\#\tilde\gamma\ge\int_{\R^d\times\R^d} c\d\gamma\\
&=\int_{\R^d\times\R^d}\left\{c_0(x_0,T(x_0,x_1))+c_1(T(x_0,x_1),x_1)\right\}\d\gamma(x_0,x_1)\\
&\ge W_{c_0}(\mu_0,T_\#\gamma)+W_{c_1}(T_\#\gamma,\mu_1)
\end{align*}
which, taking the infimum with respect to $\gamma_0\in\Pi(\mu_0,\mu)$ and $\gamma_1\in\Pi(\mu,\mu_1)$, enable us to deduce that $T_\#\gamma$ solves \eqref{eq:transportwcar} as well as the equality of the optimal values of \eqref{eq:transportwcar} and \eqref{eq:InfCostF}.

Assume now that $\mu$ solves \eqref{eq:transportwcar} and consider optimal transport plans $\gamma_0\in\Pi(\mu_0,\mu)$ with respect to the cost $c_0$ and $\gamma_1\in\Pi(\mu,\mu_1)$ with respect to the cost $c_1$. Using again the gluing lemma we find $\tilde\gamma\in\Pi(\mu_0,\mu,\mu_1)$ with ${\pi_{0,\piv}}_\#\tilde\gamma=\gamma_0$ and ${\pi_{\piv,1}}_\#\tilde\gamma=\gamma_1$, and we then have
\begin{align*}
\inf \eqref{eq:InfCostF}
&=\inf \eqref{eq:transportwcar}=\int_{\R^d\times\R^d\times\R^d}\left\{c_0(x_0,x)+c_1(x,x_1)\right\}\d\tilde\gamma(x_0,x,x_1)\\
&\ge\int_{\R^d\times\R^d\times\R^d} c(x_0,x_1)\d\tilde\gamma(x_0,x,x_1)=\int_{\R^d\times\R^d\times} c\d{\pi_{0,1}}_\#\tilde\gamma.
\end{align*}
Therefore ${\pi_{0,1}}_\#\tilde\gamma$ is optimal for \eqref{eq:InfCostF} and $c_0(x_0,x)+c_1(x,x_1)=c(x_0,x_1)$ on $\spt(\tilde\gamma)$.
\end{proof}

In other words, the coercivity condition \eqref{eq:coerciv-c} ensures that we can replace $\cA=\cP(\R^d)$ by $\cA=\cP(K)$ in \eqref{eq:transportwcar} and therefore always optimize over probabilities over a fixed compact subset of $\R^d$. 

\begin{remark}\label{rem:uniqu}
We now discuss uniqueness. Letting $K$ be a nonempty compact subset of $\R^d$ and $\cA$ be a convex subset of $\cP(K)$, note first that $\mu \in \cA\mapsto W_{c_0}(\mu_0,\mu) $ and $\mu \in \cA\mapsto W_{c_1}(\mu,\mu_1) $ are convex (regardless of specific assumptions on the costs and the measures $\mu_0$ and $\mu_1$). If we further assume that
 $\mu_0$ is absolutely continuous and $c_0$ is locally Lipschitz and satisfies the twist condition, i.e. it is differentiable in the first coordinate and for every $x_0\in\spt(\mu_0)$
$$y\mapsto\nabla_{x_0} c_0(x_0,y)\qquad\hbox{is injective,}$$
 then we claim that 
\begin{equation}\label{eq:StrictConvW}
\mu\mapsto W_{c_0}(\mu_0,\mu)\qquad\hbox{is strictly convex.}
\end{equation}
This implies in particular the strict convexity of functional to be minimized in \eqref{eq:transportwcar}, and thus  the uniqueness of a minimizer.
The proof of strict convexity of \eqref{eq:StrictConvW} follows the same  lines as Proposition 7.19 in \cite{san}, we recall the argument for the sake of completeness. Indeed,  thanks to the twist condition and the regularity assumptions on $c_0$ and $\mu_0$, the optimal transport problem between $\mu_0$ and any $\mu\in \cA$ has a unique transport plan induced by a map, see Proposition 1.15 of \cite{san} and the discussion after. Assume that $(\mu, \widetilde{\mu}, t) \in \cA\times \cA\times (0,1)$ are such  that 
\[W_{c_0}(\mu_0, (1-t)\mu + t \widetilde{\mu}) = (1-t) W_{c_0}(\mu_0, \mu) + t W_{c_0}(\mu_0, \widetilde{\mu})\]
denoting by $T$ and $\widetilde{T}$ the optimal transport maps between $\mu_0$ and $\mu$ and $\mu_0$ and $\widetilde{\mu}$ respectively, we have  
\[W_{c_0}(\mu_0, (1-t)\mu + t \widetilde{\mu})=\int_{\R^d\times \R^d} c_0 \d  \gamma_t \mbox{ with } \gamma_t:=(\id, (1-t)T+ t \widetilde{T})_\#\mu_0\]  
and since $\gamma_t \in \Pi(\mu_0, (1-t)\mu + t \widetilde{\mu})$, we deduce that $\gamma_t$ is an  optimal plan between $\mu_0$ and  $(1-t)\mu + t \widetilde{\mu}$, which, by the twist condition and the absolute continuity of $\mu_0$  implies that $\gamma_t$ is induced by a map so that $T=\widetilde{T}$ $\mu_0$-a.e. hence $\mu=T_{\#}\mu_0= \widetilde{T}_{\#} \mu_0=\widetilde{\mu}$. This shows the announced strict convexity claim.  In particular, this argument gives uniqueness for smooth and strictly convex costs. Note that this also gives uniqueness for \ref{case:Location} and \ref{case:Densityconstr} in the case of concave costs, i.e. when $c_0(x,y)=l(|x-y|)$ for $l:\R_+\to\R_+$ strictly concave, increasing and differentiable on $(0,+\infty)$, if we assume $\mu_0$ absolutely continuous and for \ref{case:Location} $K\cap\spt\mu_0=\emptyset$, or for \ref{case:Densityconstr} $\spt(\phi)\cap\spt\mu_0=\emptyset$ (see \cite{GMC} or \cite{piazzoli_full_2015} for refinements and weaker conditions). All these arguments for uniqueness of course remain true if we replace the assumptions on $\mu_0$ and $c_0$ by similar assumptions on $\mu_1$ and $c_1$.
\end{remark}

\section{Dual formulations}\label{sec:alter-form}
\subsection{Location constraints}\label{subsec:loc-cont}

Thanks to the coercivity condition \eqref{eq:coerciv-c} any solution $\mu$ to \eqref{eq:transportwcar} with $\cA=\cP(\R^d)$ is necessarily concentrated on the compact set $K$, hence both cases (i) and (ii) can be formulated over $\cP(K)$. In this case, it can be convenient, to characterize solutions of the convex minimization problem \eqref{eq:transportwcar} by duality as follows. Given $\varphi\in C(K)$, define the $c_0$-transform of $\varphi$, $\varphi^{c_0}\in C(\spt(\mu_0))$ by
\begin{equation}\label{ctransf00}
\varphi_0^{c_0}(x_0):=\min_{x\in K} \{c_0(x_0, x)-\varphi(x)\}\qquad\forall x_0 \in \spt(\mu_0),
\end{equation}
and similarly define the $c_1$-transform of $\varphi$, $\varphi^{c_1}\in C(\spt(\mu_1))$ by
\begin{equation}\label{ctransf11}
\varphi_1^{c_1}(x_1):=\min_{x\in K}\{c_1(x,x_1)-\varphi(x)\}\qquad\forall x_1\in\spt(\mu_1).
\end{equation}
It follows from Theorem 3 in \cite{ce} (where the more general multi-marginal case is considered) that the minimum in \eqref{eq:transportwcar} coincides with the value of the dual:
\be\label{eq:DualCtrans}
\sup\left\{\int\varphi_0^{c_0}\d\mu_0+\int\varphi_1^{c_1}\d\mu_1\ :\ \varphi_0,\varphi_1\in C(K),\ \varphi_0+\varphi_1=0\right\},
\ee
and the supremum in \eqref{eq:DualCtrans} is attained. Moreover, if $\varphi_0$ and $\varphi_1$ solve \eqref{eq:DualCtrans}, then $\mu\in\cP(K)$ solves \eqref{eq:transportwcar} if and only if $\varphi_0$ is a Kantorovich potential between $\mu_0$ and $\mu$ and $\varphi_1$ is a Kantorovich potential (see \cite{villatopic} and \cite{san} for more on Kantorovich duality) between $\mu$ and $\mu_1$ i.e. there exist $(\gamma_0, \gamma_1)\in\Pi(\mu_0,\mu)\times\Pi(\mu,\mu_1)$ such that
\[\begin{split}
&\varphi_0(x)+\varphi_0^{c_0}(x_0)=c_0(x_0,x)\qquad\forall(x_0,x)\in\spt(\gamma_0),\\
&\varphi_1(x)+\varphi_1^{c_1}(x_1)=c_1(x,x_1)\qquad\forall(x,x_1)\in\spt(\gamma_1).
\end{split}\]
Defining the $c_0$-concave envelope of $\varphi_0$ and the $c_1$-concave envelope of $\varphi_1$ by
\[\begin{split}
&\tfi_0(x):=\min_{x_0\in\spt(\mu_0)}\{c_0(x_0,x)-\varphi_0^{c_0}(x_0)\},\\
&\tfi_1(x):=\min_{x_1\in\spt(\mu_1)}\{c_1(x,x_1)-\varphi_1^{c_1}(x_1)\},
\end{split}\]
one has $\tfi_0\ge\varphi_0$ and $\tfi_1\ge\varphi_1$ with an equality on $\spt(\mu)$ so that $\tfi_0+ \tfi_1\ge0$ with an equality on $\spt(\mu)$.

\subsection{Density constraint} 
We now consider case (iii) where there is a constraint on the density $\mu \leq \phi$, one can characterize minimizers by duality as follows:
 
\begin{proposition}\label{densityconstraintsol}
Consider \eqref{eq:transportwcar} in the case (iii) where there is a constraint on the density $\mu\le\phi$ with $\phi\in L^1(\R^d)$, $\phi\ge0$, $\int\phi\,dx>1$ and $\spt(\phi)$ compact (as well as $\spt(\mu_0)$ and $\spt(\mu_1)$). Then the value of \eqref{eq:transportwcar} coincides with the value of its (pre-)dual formulation
\be\label{dualdensityconstraint}
\sup_{\varphi_0, \varphi_1 \in C(\spt(\phi))^2}\int\varphi_0^{c_0} \d\mu_0+\int\varphi_1^{c_1} \d\mu_1+\int\min (\varphi_0+\varphi_1,0)\phi\,\d x
\ee
(where $\varphi_i^{c_i}$ are as in formulae \eqref{ctransf00}-\eqref{ctransf11} with $K$ replaced by $\spt(\phi)$).
Moreover, the supremum in \eqref{dualdensityconstraint} is attained. If $(\varphi_0, \varphi_1)$ solves \eqref{dualdensityconstraint}, then $\mu$ solves \eqref{eq:transportwcar} under the constraint $\mu\leq \phi$ if and only if there exist $\gamma_0\in \Pi(\mu,\mu_0)$ and $\gamma_1 \in \Pi(\mu, \mu_1)$ such that
\be\label{cn1dc}
\varphi_0(x)+ \varphi_0^{c_0}(x_0)=c_0(x_0, x),\qquad\forall (x_0,x) \in \spt(\gamma_0), 
\ee
\be\label{cn2dc}
\varphi_1(x)+ \varphi_1^{c_1}(x_1)=c_1(x, x_1),\qquad\forall (x,x_1) \in \spt(\gamma_1)
\ee
(so that $\gamma_0$ and $\gamma_1$ are optimal plans and $\varphi_0$ and $\varphi_1$ are Kantorovich potentials) and
\be\label{cn3dc}
\varphi_0+\varphi_1\ge0\mbox{ on }\spt(\phi-\mu),\qquad \varphi_0+\varphi_1\le0\mbox{ on }\spt(\mu).
\ee
\end{proposition}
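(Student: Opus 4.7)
My plan is to deduce the duality via Kantorovich duality applied to each Wasserstein term combined with a minimax swap, and then to extract the optimality conditions by tracking equality through the resulting weak-duality chain. For \emph{weak duality}, I would write, for any $\varphi_0,\varphi_1\in C(\spt(\phi))$ and any admissible $\mu$,
\[
W_{c_0}(\mu_0,\mu)+W_{c_1}(\mu,\mu_1)\ge \int\varphi_0^{c_0}\d\mu_0+\int\varphi_1^{c_1}\d\mu_1+\int(\varphi_0+\varphi_1)\d\mu,
\]
and then bound $\int(\varphi_0+\varphi_1)\d\mu\ge\int\min(\varphi_0+\varphi_1,0)\d\mu\ge\int\min(\varphi_0+\varphi_1,0)\phi\d x$, the first inequality from $\min(\cdot,0)\le\cdot$ and the second from $\min(\cdot,0)\le 0$ combined with $\d\mu\le\phi\d x$. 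This already gives the $\ge$ direction of the duality.

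For \emph{strong duality and existence} of a dual maximizer, I would rewrite the primal as the saddle-point
\[
\inf_{\mu\le\phi,\,\mu\in\cP(\spt\phi)}\sup_{\varphi_0,\varphi_1\in C(\spt\phi)}\Big\{\int\varphi_0^{c_0}\d\mu_0+\int\varphi_1^{c_1}\d\mu_1+\int(\varphi_0+\varphi_1)\d\mu\Big\},
\]
and invoke Sion's minimax theorem: the feasible set for $\mu$ is convex and weakly-$*$ compact, the integrand is affine (hence weakly-$*$ continuous) in $\mu$ and concave upper semicontinuous in $(\varphi_0,\varphi_1)$ because $c$-transforms are concave. After swapping, the inner infimum $\inf_{\mu\le\phi,\,\mu\in\cP}\int\psi\d\mu$ with $\psi:=\varphi_0+\varphi_1$ is handled by dualizing the unit-mass constraint with a Lagrange multiplier $\alpha\in\R$, yielding $\sup_\alpha\{-\alpha+\int\min(\psi+\alpha,0)\phi\d x\}$. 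The key algebraic observation is that the shift $\varphi_0\mapsto\varphi_0+\alpha$ decreases $\int\varphi_0^{c_0}\d\mu_0$ by $\alpha$ while raising $\psi$ by $\alpha$, so $\alpha$ is absorbed into the existing supremum over $\varphi_0$ and one recovers exactly \eqref{dualdensityconstraint}. Existence of a maximizer follows by restricting to $c_i$-concave potentials $\tfi_0,\tfi_1$ and invoking Arzel\`a--Ascoli on the compact set $\spt(\phi)$ after fixing a normalization made possible by the absorbed $\alpha$-shift.

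For the \emph{characterization of optimal $\mu$} via \eqref{cn1dc}--\eqref{cn3dc}, I would chase equality through the weak-duality chain: the difference between primal and dual values decomposes into three non-negative excess terms,
\[
\int(c_0-\varphi_0^{c_0}-\varphi_0)\d\gamma_0,\quad \int(c_1-\varphi_1-\varphi_1^{c_1})\d\gamma_1,\quad \int(\varphi_0+\varphi_1)\d\mu-\int\min(\varphi_0+\varphi_1,0)\phi\d x,
\]
where $\gamma_0\in\Pi(\mu,\mu_0)$ and $\gamma_1\in\Pi(\mu,\mu_1)$ are any transport plans (taken optimal for each $W_{c_i}$ when $\mu$ is primal-optimal). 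When both primal and dual are attained, all three excesses must vanish: the first two give \eqref{cn1dc}--\eqref{cn2dc} (forcing $\gamma_0,\gamma_1$ to be optimal with the $\varphi_i$ as Kantorovich potentials), and for the third, writing $\d\mu=h\,\phi\,\d x$ with $h\in[0,1]$ and splitting into $\{\psi>0\}$ and $\{\psi<0\}$ gives
\[
\int_{\{\psi>0\}}\psi h\phi\d x+\int_{\{\psi<0\}}\psi(h-1)\phi\d x=0,
\]
with both integrands non-negative; hence $h=0$ on $\{\psi>0\}$ and $h=1$ on $\{\psi<0\}$, which by continuity of $\psi$ is precisely \eqref{cn3dc}. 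The converse is obtained by reading the same chain backwards.

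The \emph{main obstacle} is the rigorous implementation of the minimax step: applying Sion's theorem in the correct topologies (weak-$*$ on measures vs.\ uniform on $C(\spt\phi)$) and carrying out the $\alpha$-absorption so that the inner minimization in $\mu$ aligns exactly with \eqref{dualdensityconstraint}; in particular one must verify that the sup over $\varphi_0$ is unchanged by a global additive shift so that the Lagrange multiplier really can be absorbed. Once this identification is clean, existence and the optimality characterization follow from essentially routine manipulations.
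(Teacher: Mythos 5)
Your route to strong duality is genuinely different from the paper's: you use Sion's minimax theorem to swap $\inf_\mu\sup_{\varphi}$, then compute the inner $\inf_{0\le\mu\le\phi,\ \int\d\mu=1}\int(\varphi_0+\varphi_1)\d\mu$ by dualizing the unit-mass constraint with a scalar multiplier $\alpha$ and absorbing $\alpha$ into $\varphi_0$ via the shift-invariance of the objective. The paper instead invokes Fenchel--Rockafellar once, directly on the functional $\mu\mapsto W_{c_0}(\mu_0,\mu)+W_{c_1}(\mu,\mu_1)+\chi_{\{\mu\le\phi\}}$. Both are valid; your version makes the Lagrange multiplier for the mass constraint explicit, while the paper's is shorter because one duality theorem handles everything at once. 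The equality-chasing argument you use for \eqref{cn1dc}--\eqref{cn3dc} (three nonnegative excesses summing to zero, with the third split over $\{\psi>0\}$ and $\{\psi<0\}$) is essentially identical to what the paper does. Do note that the inner Lagrangian swap needs its own (elementary) justification — e.g.\ exhibiting an $\alpha^*$ with $\int_{\{\varphi_0+\varphi_1<-\alpha^*\}}\phi\le 1\le\int_{\{\varphi_0+\varphi_1\le-\alpha^*\}}\phi$ and a matching primal $\mu$ — rather than being subsumed by Sion.

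There is a genuine gap in your argument for attainment of the dual supremum. Restricting to $c_i$-concave envelopes gives equicontinuity, and the shift-invariance lets you normalize so that, say, $\min_{\spt\phi}\varphi_0^n=0$, which bounds $\varphi_0^n$. But Arzel\`a--Ascoli also needs a uniform bound on $\varphi_1^n$, and neither equicontinuity nor the normalization provides that by itself: the objective contains $\int\min(\varphi_0^n+\varphi_1^n,0)\phi\,\d x$, which only penalizes $\varphi_1^n$ becoming very negative \emph{through the factor $\int\phi\,\d x$}, while $\int(\varphi_1^n)^{c_1}\d\mu_1$ rewards it with weight $1$. Concretely, if $m_n:=\min_{\spt\phi}\varphi_1^n\to-\infty$, the dual objective is bounded above by $C+(\int\phi\,\d x-1)\,m_n$, which tends to $-\infty$ \emph{precisely because} $\int\phi\,\d x>1$; this contradicts the maximizing property. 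This is the only place the hypothesis $\int\phi\,\d x>1$ enters the attainment proof, and your sketch never uses it. Without this step, the maximizing sequence need not be uniformly bounded and Arzel\`a--Ascoli does not apply.
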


\begin{proof}
The fact that the concave maximization problem \eqref{dualdensityconstraint} is the dual of \eqref{eq:transportwcar} under the constraint $\mu\leq \phi$ follows from the Fenchel-Rockafellar duality theorem and the Kantorovich duality formula. Indeed, we first have
\[\sup \eqref{dualdensityconstraint}=-\inf_{\varphi_0, \varphi_1 \in C(\spt(\phi))^2} F(\varphi_0, \varphi_1)+ G(-\varphi_0-\varphi_1)\]
where 
\[F(\varphi_0, \varphi_1):=-\int\varphi_0^{c_0} \d\mu_0-\int\varphi_1^{c_1} \d\mu_1, \; G(\varphi):=\int \max(\varphi, 0)\phi \; \d x, \; \varphi \in C(\spt(\phi)).\]
Note that $F$ and $G$ are convex and continuous for the uniform convergence topology and it is easy to see that $\sup \eqref{dualdensityconstraint}$ is finite (see the proof of existence of a solution to \eqref{dualdensityconstraint} below) so that by the Fenchel-Rockafellar duality Theorem, we have
\[\sup \eqref{dualdensityconstraint}=-\sup_{\mu \in C(\spt(\phi))^*}\{-F^*(\mu, \mu)-G^*(\mu)\}=\inf_{\mu \in C(\spt(\phi))^*}\{F^*(\mu, \mu)+G^*(\mu)\} \]
By Kantorovich duality formula (Proposition 1.11 in \cite{san}) we have (also see \cite{ce} for details)
\[F^*(\mu, \mu)=\begin{cases} W_{c_0}(\mu_0, \mu) + W_{c_1}(\mu, \mu_1) \mbox{ if $\mu \in \cP(\spt(\phi))$} \\ + \infty \mbox{ otherwise.} \end{cases}\]
and \[G^*(\mu)=\sup_{\varphi \in C(\spt(\phi))} \int \varphi \d \mu-\int \max(\varphi, 0)\phi \; \d x\]
and when $\mu \in \cP(\spt(\phi))$ (which is the case when $F^*(\mu, \mu)<+\infty$), the above maximization can be restricted to nonnegative functions $\varphi$ yielding
\[G^*(\mu)=\begin{cases} 0 \mbox{ if $\mu \leq \phi$}\\ + \infty \mbox{ otherwise.}\end{cases}\] 
We thus have
\[\sup \eqref{dualdensityconstraint}=\inf_{\mu \in  \cP(\spt(\phi)), \; \mu \leq \phi} W_{c_0}(\mu_0, \mu) + W_{c_1}(\mu, \mu_1) \]
and (up to extending $\mu$ by $0$ outside $\spt(\phi)$) the right-hand side of the previous equality is equivalent to \eqref{eq:transportwcar} in the case (iii) where there is a constraint on the density $\mu\le\phi$.
Let us now prove that \eqref{dualdensityconstraint} admits a solution. To see this we remark that the objective is unchanged when one replaces $(\varphi_0,\varphi_1)$ by $(\varphi_0+\lambda, \varphi_1-\lambda)$ where $\lambda$ is a constant. Moreover, replacing $\varphi_0$ and $\varphi_1$ by their $c_0/c_1$-concave envelopes defined for every $x\in \spt(\phi)$ by:
\be\label{cctrick}
\begin{split}
\tfi_0(x)&:=\min_{x_0\in\spt(\mu_0)}\{c_0(x_0,x)-\varphi_0^{c_0}(x_0)\}, \;\\ \tfi_1(x)
&:=\min_{x_1 \in \spt(\mu_1)} \{c_1(x, x_1)-\varphi_1^{c_1}(x_1)\}
\end{split}\ee
it is well-known that $\tfi_i\ge\varphi_i$ and $\tfi_i^{c_i}=\varphi_i^{c_i}$ for $i=0,1$ so that replacing $\varphi_i$ by $\tfi_i$ is an improvement in the objective of \eqref{dualdensityconstraint}, moreover the functions $\tfi_i$ have a uniform modulus of continuity inherited from the uniform continuity of $c_i$. From these observations, we can find a uniformly equicontinuous maximizing sequence $(\varphi_0^n, \varphi_1^n)_n$ for which $\min_{\spt(\phi)} \varphi_0^n=0$ so that $\varphi_0^n$ is also uniformly bounded. Since $\min(\varphi_1^n+ \varphi_0^n,0)\leq 0$, the fact that $(\varphi_0^n, \varphi_1^n)_n$ is a maximizing sequence together with the uniform bounds on $\varphi_0^n$ we get a uniform lower bound on $\int (\varphi_1^n)^{c_1} d\mu_1$ from which we easily derive a uniform upper bound on $\varphi_1^n$ thanks to \eqref{cctrick}. To show that $\varphi_1^n$ is also uniformly bounded from below, we observe that the quantity
$$\int(\varphi_1^n)^{c_1}d\mu_1+\int\min(\varphi_1^n+\varphi_0^n,0)\phi\,dx$$
is bounded from below and bounded from above by $C+(\int\phi\,dx-1) \min_{\spt(\phi)} \varphi_1^n$ for some constant $C$. Since $\int\phi\,dx>1$ this gives the desired lower bound. Having thus found a uniformly bounded and equicontinuous maximizing sequence, we deduce the existence of a solution to \eqref{dualdensityconstraint} from Arzel\`a-Ascoli theorem.

Let us now look at the optimality conditions which follow from the above duality. If $(\varphi_0, \varphi_1)$ solves \eqref{dualdensityconstraint}, then $\mu$ solves \eqref{eq:transportwcar} under the constraint $\mu\leq \phi$ if and only if
$$W_{c_0}(\mu_0, \mu)+ W_{c_1}(\mu, \mu_1) =\int \varphi_0^{c_0} \d \mu_0+ \int \varphi_1^{c_1} \d \mu_1+ \int \min (\varphi_0+\varphi_1,0)\phi.$$
If $\gamma_0$ (respectively $\gamma_1$) is an optimal plan for $c_0$ (resp. $c_1$) between $\mu_0$ and $\mu$ (resp. $\mu$ and $\mu_1$), we thus have
\[\begin{split}
&\int\varphi_0^{c_0}\d\mu_0+\int\varphi_1^{c_1}\d\mu_1+\int\min(\varphi_0+\varphi_1,0)\phi=\int c_0 \d \gamma_0+ \int c_1 \d \gamma_1\\
&\qquad\ge\int (\varphi_0^{c_0}(x_0)+\varphi_0(x)) \d \gamma_0(x_0, x)+\int(\varphi_1^{c_1}(x_1)+\varphi_1(x)) \d \gamma_1(x, x_1)\\
&\qquad=\int\varphi_0^{c_0} \d\mu_0+\int\varphi_1^{c_1} \d\mu_0 + \int(\varphi_0+ \varphi_1) \d\mu\\
&\qquad\ge\int \varphi_0^{c_0} \d\mu_0+ \int \varphi_1^{c_1}\d\mu_0+\int\min(\varphi_0+ \varphi_1,0) \d \mu\\
&\qquad\ge\int \varphi_0^{c_0} \d\mu_0+ \int \varphi_1^{c_1}\d\mu_0+\int\min(\varphi_0+ \varphi_1,0)\phi\,\d x
\end{split}\]
where we have used that $\mu \leq \phi$ in the last line. All the inequalities above should therefore be equalities which together with the continuity of $\varphi_0$ and $\varphi_1$ is easily seen to imply \eqref{cn1dc}-\eqref{cn2dc}-\eqref{cn3dc}. This shows the necessity of these conditions, the proof of sufficiency by duality is direct and therefore left to the reader. 
\end{proof}

\begin{corollary}\label{minimaloustidefi}
Under the same assumptions as in Proposition \ref{densityconstraintsol}, assume that $\mu$ is optimal for \eqref{eq:transportwcar} under the constraint $\mu\leq \phi$ and let $\gamma_0$ and $\gamma_1$ be optimal transport plans. Then, whenever $x_0,x,x_1$ are such that $(x_0,x)\in\spt(\gamma_0)$, $(x,x_1)\in\spt(\gamma_1)$, $x\in\spt(\phi-\mu)$, we have
$$c_0(x_0,x)+c_1(x,x_1)=\min_{y\in\spt(\phi-\mu)}\big\{c_0(x_0, y)+c_1(y, x_1)\big\}.$$
\end{corollary}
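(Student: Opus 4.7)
The plan is to exploit the optimality conditions \eqref{cn1dc}, \eqref{cn2dc}, \eqref{cn3dc} from Proposition \ref{densityconstraintsol}, together with the elementary inequalities satisfied by $c_0$-/$c_1$-concave potentials. Let $(\varphi_0,\varphi_1)$ be a maximizer of the dual \eqref{dualdensityconstraint} associated with the optimal $\mu$ (which exists by the proposition), and use the same optimal plans $\gamma_0,\gamma_1$.

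First, I would observe that for the distinguished triple $(x_0,x,x_1)$ in the statement, combining \eqref{cn1dc} at $(x_0,x)$ and \eqref{cn2dc} at $(x,x_1)$ gives
\[
c_0(x_0,x)+c_1(x,x_1)=\varphi_0^{c_0}(x_0)+\varphi_1^{c_1}(x_1)+\bigl(\varphi_0(x)+\varphi_1(x)\bigr).
\]
The key step is then to show $\varphi_0(x)+\varphi_1(x)=0$. Because $(x_0,x)\in\spt(\gamma_0)$ and $\gamma_0\in\Pi(\mu_0,\mu)$, the point $x$ lies in $\spt(\mu)$, while by hypothesis $x\in\spt(\phi-\mu)$; condition \eqref{cn3dc} then forces $\varphi_0(x)+\varphi_1(x)\le 0$ and $\ge 0$ simultaneously, so equality holds. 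Thus
\[
c_0(x_0,x)+c_1(x,x_1)=\varphi_0^{c_0}(x_0)+\varphi_1^{c_1}(x_1).
\]

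Next, for an arbitrary competitor $y\in\spt(\phi-\mu)$, the defining inequalities of the $c_i$-transforms \eqref{ctransf00}--\eqref{ctransf11} give
\[
c_0(x_0,y)\ge\varphi_0(y)+\varphi_0^{c_0}(x_0),\qquad c_1(y,x_1)\ge\varphi_1(y)+\varphi_1^{c_1}(x_1),
\]
and the first half of \eqref{cn3dc} gives $\varphi_0(y)+\varphi_1(y)\ge 0$. Summing these three inequalities yields
\[
c_0(x_0,y)+c_1(y,x_1)\ge\varphi_0^{c_0}(x_0)+\varphi_1^{c_1}(x_1)=c_0(x_0,x)+c_1(x,x_1),
\]
which establishes the minimality claim, since the infimum is attained at $y=x$.

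I do not expect any real obstacle: the whole argument is a bookkeeping of the primal/dual optimality relations already packaged in Proposition \ref{densityconstraintsol}. The only mildly delicate point is justifying that a point in the projection of $\spt(\gamma_0)$ onto the second factor lies in $\spt(\mu)$, which follows from the standard fact that the support of a transport plan projects into the supports of its marginals.
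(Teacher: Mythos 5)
Your proof is correct and follows essentially the same route as the paper's: both arguments hinge on the identity $c_0(x_0,x)+c_1(x,x_1)=\varphi_0^{c_0}(x_0)+\varphi_1^{c_1}(x_1)$ obtained from \eqref{cn1dc}--\eqref{cn2dc} and the sign condition $\varphi_0(x)+\varphi_1(x)=0$ on $\spt(\mu)\cap\spt(\phi-\mu)$, together with the transform inequalities and the first half of \eqref{cn3dc} to bound $c_0(x_0,y)+c_1(y,x_1)$ from below for $y\in\spt(\phi-\mu)$. The only cosmetic difference is the order in which you establish the two inequalities; the ingredients and the bookkeeping are identical.
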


\begin{proof}
Let $(\varphi_0, \varphi_1)$ solve \eqref{dualdensityconstraint}. By construction, for every $(x_0, x_1, y)\in \spt(\mu_0)\times \spt(\mu_1)\times\spt(\phi)$ one has
$$c_0(x_0,y)+c_1(y,x_1)\ge\varphi_0^{c_0}(x_0)+\varphi_1^{c_1}(x_1)+(\varphi_0+\varphi_1)(y).$$
Together with \eqref{cn3dc} this implies that for every $(x_0,x_1)\in\spt(\mu_0)\times\spt(\mu_1)$
$$\min_{y\in\spt(\phi-\mu)}\{c_0(x_0,y)+c_1(y,x_1)\}\ge\varphi_0^{c_0}(x_0)+\varphi_1^{c_1}(x_1).$$
But now if $x\in\spt(\mu)\cap\spt(\phi-\mu)$, by \eqref{cn3dc} again we have $\varphi_0(x)+\varphi_1(x)=0$. Hence by \eqref{cn1dc}-\eqref{cn2dc} whenever $(x_0,x)\in\spt(\gamma_0)$, $(x,x_1)\in\spt(\gamma_1)$ and $x\in\spt(\phi-\mu)$ we have
\[\varphi_0^{c_0}(x_0)+ \varphi_1^{c_1}(x_1)=c_0(x_0, x)+c_1(x, x_1)\ge\min_{y\in \spt(\phi-\mu)} \{c_0(x_0,y)+c_1(y,x_1)\},\]
which yields the desired result.
\end{proof}

In the discrete case, we can easily deduce a bang-bang result stating that the constraint $\mu\leq \phi$ is always binding when $\mu>0$ under mild conditions on the cost. We will give similar bang-bang results for distance-like costs in Section \ref{sec:distance}.

\begin{corollary}\label{corodiscret}
Assume that $\mu_0$ and $\mu_1$ are discrete, that for every $(x_0, x_1)\in \spt(\mu_0)\times \spt(\mu_1)$, $c_0(x_0,.)$ and $c_1(., x_1)$ are $C^1$ and $M$-Lipschitz on $\spt(\phi)$ (for some $M$ that does not depend on $x_0$ and $x_1$) and that the set
\be\label{nullset}
\{x\in \spt(\phi)\; : \; \nabla_x c_0(x_0,x)+\nabla_x c_1(x, x_1)=0\}
\ee
is Lebesgue negligible. Then if $\mu$ is optimal for \eqref{eq:transportwcar} under the constraint $\mu\leq \phi$ there exists a measurable subset $E$ of $\spt(\phi)$ such that $\mu= \phi \mathds{1}_E$.
\end{corollary}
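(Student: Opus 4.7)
The plan is to prove that the set $A:=\{x\in\spt(\phi): 0<\mu(x)<\phi(x)\}$ (with $\mu$ also denoting the density, per the paper's convention) is Lebesgue-negligible; the corollary then follows by taking $E:=\{\mu=\phi\}\cap\{\phi>0\}$, so that $\mu=\phi\mathds{1}_E$.

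First, I invoke Proposition \ref{densityconstraintsol} to fix continuous dual maximizers $(\varphi_0,\varphi_1)$, satisfying the optimality relations \eqref{cn1dc}--\eqref{cn3dc} with some optimal plans $\gamma_0,\gamma_1$. Replacing each $\varphi_i$ by its $c_i$-concave envelope as in \eqref{cctrick} only improves (or preserves) the dual objective, so I may assume the explicit representation
\[\varphi_0(x)=\min_{1\le i\le m}\{c_0(x_0^i,x)-a_i\},\qquad\varphi_1(x)=\min_{1\le j\le n}\{c_1(x,x_1^j)-b_j\},\]
where $\{x_0^i\}_{i=1}^m$ and $\{x_1^j\}_{j=1}^n$ are the (finitely many) atoms of $\mu_0,\mu_1$ and $a_i,b_j$ are real constants. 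By the $M$-Lipschitz hypothesis on each $c_0(x_0^i,\cdot)$ and $c_1(\cdot,x_1^j)$, both $\varphi_0$ and $\varphi_1$ are then $M$-Lipschitz on $\spt(\phi)$.

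At every Lebesgue density point $x$ of $A$ one has $\mu(B(x,r))>0$ and $(\phi-\mu)(B(x,r))>0$ for all small $r$, whence $x\in\spt(\mu)\cap\spt(\phi-\mu)$, and \eqref{cn3dc} forces $\varphi_0(x)+\varphi_1(x)=0$. By Rademacher's theorem, $\varphi_0$ and $\varphi_1$ are differentiable at almost every such $x$. The delicate step is the claim that the continuous function $f:=\varphi_0+\varphi_1$, which vanishes on $A$, must satisfy $\nabla f(x)=0$ at any Lebesgue density point of $A$ where it is differentiable: this uses the Lebesgue density theorem to produce sequences in $A$ approaching $x$ from almost every direction, forcing all directional derivatives to vanish. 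Once this is in hand, choosing indices $i_0,j_0$ realizing the minima at $x$, the inequalities $\varphi_0\le c_0(x_0^{i_0},\cdot)-a_{i_0}$ and $\varphi_1\le c_1(\cdot,x_1^{j_0})-b_{j_0}$ with equality at $x$, combined with the $C^1$ assumption, yield $\nabla\varphi_0(x)=\nabla_xc_0(x_0^{i_0},x)$ and $\nabla\varphi_1(x)=\nabla_xc_1(x,x_1^{j_0})$, hence
\[\nabla_xc_0(x_0^{i_0},x)+\nabla_xc_1(x,x_1^{j_0})=0\qquad\text{at a.e. }x\in A.\]

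Consequently, up to a Lebesgue-null set, $A$ is contained in the finite union $\bigcup_{i,j}\{x\in\spt(\phi): \nabla_xc_0(x_0^i,x)+\nabla_xc_1(x,x_1^j)=0\}$, which is Lebesgue-null by \eqref{nullset}. Therefore $|A|=0$, and the corollary follows. The main obstacle is the gradient-vanishing argument at Lebesgue density points of $A$: this is a routine consequence of Rademacher combined with the density theorem, but it is the one place where the geometry of $A$ genuinely enters the proof, everything else being an algebraic extraction from the optimality system.
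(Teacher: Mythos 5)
Your proof is correct and follows essentially the same path as the paper: obtain dual potentials from Proposition~\ref{densityconstraintsol}, observe that $\varphi_0+\varphi_1=0$ on the ``interior'' set $\{0<\mu<\phi\}$, differentiate to get $\nabla\varphi_0+\nabla\varphi_1=0$ a.e.\ there, identify each gradient via the envelope/first-order condition, and conclude from the null-set hypothesis~\eqref{nullset}. The one place you diverge is in how you identify the pairing atoms: you pick indices $i_0,j_0$ realizing the minima in the $c$-concave envelope, which tacitly requires $\mu_0,\mu_1$ to have \emph{finitely} many atoms (your $\min_{1\le i\le m}$), whereas the paper works with the optimal plans $\gamma_0,\gamma_1$ and the fact that for $\mu$-a.e.\ $x$ one has $(x_0,x)\in\spt(\gamma_0)$ with $x_0$ an \emph{atom} of $\mu_0$; the latter argument covers the genuinely countable (e.g.\ accumulating) case, where the envelope's minimizer over the compact set $\spt(\mu_0)$ need not be an atom, and keeps the final union in~\eqref{nullset} countable. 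For the finitely supported case the two selections coincide, and your more detailed treatment of the Lebesgue density argument (all directional derivatives of $\varphi_0+\varphi_1$ vanish at density points of $A$) is the standard justification of the step the paper takes for granted.
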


\begin{proof}
Let $(\varphi_0,\varphi_1)$ solve \eqref{dualdensityconstraint}, As seen in the proof of Proposition \ref{densityconstraintsol}, we may assume that, for every $x\in\spt(\phi)$
\[\begin{split}
&\varphi_0(x):=\min_{x_0 \in \spt(\mu_0)} \{c_0(x_0, x)-\varphi_0^{c_0}(x_0)\},\\
&\varphi_1(x):=\min_{x_1 \in \spt(\mu_1)} \{c_1(x, x_1)-\varphi_1^{c_1}(x_1)\},
\end{split}\]
so that $\varphi_0$ and $\varphi_1$ are Lipschitz hence differentiable a.e. on $\spt(\phi)$. Since $\varphi_0+ \varphi_1=0$ on $\spt(\mu)\cap \spt(\phi-\mu)$, we then have
\[ \nabla \varphi_0+ \nabla \varphi_1=0\qquad\mbox{ a.e. on }\{0<\mu <\phi\}\]
but if $\varphi_0$ (respectively $\varphi_1$) is differentiable at $x$ and $(x_0,x)\in\spt(\gamma_0)$ (resp. $(x, x_1)\in\spt(\gamma_1))$, where $\gamma_0$ and $\gamma_1$ are optimal plans, then
\[\nabla \varphi_0(x)=\nabla_x c_0(x_0,x),\qquad\nabla \varphi_1(x)=\nabla_x c_1(x,x_1).\]
Hence, denoting by $A_i$ the countable concentration set of $\mu_i$ ($i=0,1$), a.e. $x$ such that $0<\mu(x)<\phi(x)$ belongs to
\[\bigcup_{(x_0,x_1)\in A_0\times A_1}\{x\in\spt(\phi)\; :\;\nabla_x c_0(x_0,x)+\nabla_x c_1(x,x_1)=0\},\]
which is negligible by assumption. The desired bang-bang conclusion then readily follows.
\end{proof}

\begin{remark}
In some cases, for instance when the costs $c_0$ and $c_1$ depend quadratically or more generally as a $p$-th power of the distance (with $p>1$), the set in \eqref{nullset} reduces to a single point which depends in a Lipschitz way on $x_0$ and $x_1$. The conclusion of Corollary \ref{corodiscret} then still holds under the weaker assumption that one between $\mu_0$ and $\mu_1$ is discrete and the other one is singular with respect to the Lebesgue measure. More precisely, this still holds if the Hausdorff dimension of the support of $\mu_0$ is $h_0$, and the Hausdorff dimension of the support of $\mu_1$ is $h_1$, with $h_0+h_1<d$.
\end{remark}

\section{Distance like costs}\label{sec:distance}

In this section, we pay special attention to the case of distance-like costs:
\be\label{distcost}
c_0(x_0,x):=|x_0-x|^\alpha,\qquad c_1(x,x_1):=\lambda|x-x_1|^\alpha,
\ee
with $0<\alpha\le1$ and $\lambda>0$.

\subsection{Location constraint, concentration and integrability on the boundary}

Let us start with the case of a location constraint of type (ii): $\mu\in\cP(K)$ for some nonempty compact subset $K$ of $\R^d$.

\begin{lemma}\label{supportbyboundary}
Assume $K$ is a compact subset of $\R^d$ and that one of the following assumption holds:
\begin{itemize}
\item $\alpha=1$, $\lambda > 1$ and the interior of $K$ is disjoint from $\spt(\mu_1)$,
\item $\alpha\in(0,1)$ and the interior of $K$ is disjoint from $\spt(\mu_0)\cup\spt(\mu_1)$.
\end{itemize}
Then any solution $\mu$ of \eqref{eq:transportwcar} under the constraint $\mu\in\cP(K)$ is supported by $\partial K$.
\end{lemma}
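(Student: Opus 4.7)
My plan is to reduce the support statement to a pointwise assertion about the inner three-point cost minimization via Proposition~\ref{prop:MultimargForm}. For any optimal $\mu$, that proposition provides a three-marginal plan $\tilde\gamma \in \Pi(\mu_0,\mu,\mu_1)$ with $\mu = \pi_{\piv}^\#\tilde\gamma$ concentrated on triples $(x_0, x, x_1)$ such that $x$ minimizes $h_{x_0,x_1}(y) := |x_0 - y|^\alpha + \lambda|y - x_1|^\alpha$ over $K$. It therefore suffices to prove that, under either of the stated hypotheses, for every $(x_0,x_1) \in \spt(\mu_0) \times \spt(\mu_1)$, every minimizer $x^*$ of $h_{x_0,x_1}$ over $K$ already lies in $\partial K$. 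I argue by contradiction, assuming $x^* \in \inter(K)$, so that $x^*$ is an unconstrained local minimum of $h_{x_0,x_1}$ on some small open ball contained in $K$.

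For the case $\alpha = 1$ and $\lambda > 1$: the function $h_{x_0,x_1}$ is convex with cusps only at $x_0$ and $x_1$ (the degenerate configuration $x_0 = x_1$ is automatically outside $\inter(K)$ by the assumption on $\spt(\mu_1)$). If $x^* \notin \{x_0,x_1\}$, then $h_{x_0,x_1}$ is smooth at $x^*$ and the first-order condition $\nabla h_{x_0,x_1}(x^*)=0$ equates a unit vector with a vector of norm $\lambda$, forcing $\lambda = 1$, a contradiction. If $x^* = x_0$ (which may a priori belong to $\inter(K)$), the subdifferential of $h_{x_0,x_1}$ at $x_0$ is a translate of the closed unit ball by a vector of norm $\lambda$, so $0 \in \partial h_{x_0,x_1}(x_0)$ requires $\lambda \leq 1$, again contradicting $\lambda > 1$. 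Finally $x^* = x_1$ is excluded outright by $\inter(K) \cap \spt(\mu_1) = \emptyset$.

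For the case $\alpha \in (0,1)$: the stronger hypothesis $\inter(K) \cap (\spt(\mu_0) \cup \spt(\mu_1)) = \emptyset$ excludes $x^* \in \{x_0,x_1\}$ directly, so $h_{x_0,x_1}$ is $C^2$ at $x^*$ and both first- and second-order necessary conditions apply. The first-order condition forces $x^*$ to lie on the open segment joining $x_0$ and $x_1$, so the common radial direction at both singularities is $\hat v := (x_1-x_0)/|x_1-x_0|$. A direct computation of the Hessian of $y \mapsto |y|^\alpha$ at a nonzero point yields eigenvalue $\alpha(\alpha-1)|y|^{\alpha-2}$ radially and $\alpha|y|^{\alpha-2}$ tangentially, so the Hessian of $h_{x_0,x_1}$ at $x^*$ along $\hat v$ equals $\alpha(\alpha-1)\bigl(|x^*-x_0|^{\alpha-2} + \lambda|x^*-x_1|^{\alpha-2}\bigr) < 0$ since $\alpha < 1$. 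This contradicts the second-order necessary condition for a local minimum.

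The main obstacle is modest and really reduces to the Hessian sign computation in the second case together with the cusp subdifferential analysis at $x^* = x_0$ in the first; the role of the disjointness hypotheses between $\inter(K)$ and the supports of $\mu_0, \mu_1$ is precisely to rule out the cusp configurations that the cost structure alone cannot exclude. Note the asymmetry in the $\alpha = 1$ case (where only $\spt(\mu_1)$ need be separated from $\inter(K)$): it is exactly the penalty $\lambda > 1$ on the $c_1$-term that forces the cost to be strictly better on $\partial K$ when $x^*$ would otherwise equal $x_0$.
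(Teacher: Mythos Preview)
Your argument is correct and follows essentially the same route as the paper: reduce to the pointwise minimization via Proposition~\ref{prop:MultimargForm}, then rule out interior minimizers by first-order (and in the concave case second-order) considerations. The only cosmetic difference is that for $\alpha=1$ the paper dispatches the case $x^*\neq x_1$ in one stroke by moving $x^*$ toward $x_1$ (the $c_1$-term decreases at rate $\lambda>1$ while the $c_0$-term increases at rate at most $1$), whereas you split off $x^*=x_0$ and treat it via the subdifferential; for $\alpha\in(0,1)$ the paper phrases your Hessian computation as strict concavity of $h_{x_0,x_1}$ restricted to the segment $[x_0,x_1]$, which is the same thing.
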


\begin{proof}
For $(x_0,x_1)\in\spt(\mu_0)\times\spt(\mu_1)$, set
\[\begin{split}
&c(x_0,x_1):=\min_{x\in K}\{|x_0-x|^\alpha+\lambda|x-x_1|^\alpha\},\\
&T(x_0,x_1):=\argmin_{x\in K}\{|x_0-x|^\alpha+\lambda|x-x_1|^\alpha\}.
\end{split}\]
We know from Proposition \ref{prop:MultimargForm} that $\mu$ is supported by $T(\spt(\mu_0)\times \spt(\mu_1))$. In particular, if $x\in \spt(\mu)$ is an interior point of $K$ then it is a local minimizer of $c_0(x_0,\cdot)+c_1(\cdot,x_1)$ for some $(x_0, x_1)\in \spt(\mu_0)\times \spt(\mu_1)$. In the case $\alpha=1$, $\lambda > 1$, since $x\neq x_1$, this is clearly impossible. In the case $\alpha < 1$, our assumption implies that $x\notin\{x_0, x_1\}$, so that $x$ has to be a critical point of $ c_0(x_0,\cdot)+c_1(\cdot,x_1)$. One should have
$$\alpha|x-x_0|^{\alpha-2}(x-x_0) + \lambda\alpha|x-x_1|^{\alpha-2}(x-x_1) = 0,$$
so that $x_0\ne x_1$ and $x\in[x_0,x_1]$. But $c_0(x_0,\cdot)+c_1(\cdot,x_1)$ is strictly concave on $[x_0, x_1]$ which contradicts $x$ being a local minimizer.
\end{proof}

\begin{remark}
If $\alpha=\lambda=1$ the previous result is false: if $d=1$, $\mu_0=\delta_0$ and $\mu_1=\delta_1$, and $K=[1/4, 3/4]$, then it follows from the triangle inequality that any probability supported by $K$ is actually optimal.
\end{remark}

Now that we know that minimizers are supported by $\partial K$, one may wonder, if $K$ and $\mu_1$ are regular enough, whether these minimizers are absolutely continuous with respect to the $(d-1)$-Hausdorff measure on $\partial K$, the answer is positive if $\mu_0$ is discrete, i.e. is concentrated on a countable set, $\mu_0(K)=0$ and $\mu_1$ is absolutely continuous with support disjoint from $\iK$ (see Proposition \ref{dlikeinteg} below). A first step consists in the following result.

\begin{lemma}\label{lemmaTuniv}
Assume that $c_0$ and $c_1$ are as in \eqref{distcost} (with $\alpha\in(0,1]$ and $\lambda>1$ if $\alpha=1$), and that $K$ is compact. Then for every $x_0$ and (Lebesgue-)almost every $x_1\in\R^d\setminus K$, the set
$$T_{x_0}(x_1):=\argmin_{x\in K}\big\{|x_0-x|^\alpha+\lambda|x-x_1|^\alpha\big\}$$
is a singleton. 
\end{lemma}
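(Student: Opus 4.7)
My plan is to hold $x_0$ fixed, study the value function
\[\psi(x_1):=\min_{x\in K}\{|x_0-x|^\alpha+\lambda|x-x_1|^\alpha\}\]
as a function of $x_1$, and show that $T_{x_0}(x_1)$ is a singleton at every point $x_1\in\R^d\setminus K$ where $\psi$ is differentiable. Rademacher's theorem applied to $\psi$ will then yield the almost-everywhere conclusion.

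First I would check that $\psi$ is locally Lipschitz on the open set $\R^d\setminus K$. For $x_1$ in a small ball $B$ staying at distance at least $\delta>0$ from $K$, every map $z\mapsto\lambda|x-z|^\alpha$ with $x\in K$ is smooth on $B$ and its gradient is bounded by a constant depending only on $\alpha,\lambda,\delta$ and $\diam(K)$. Thus $\psi$ is the infimum of a uniformly Lipschitz family on $B$ and is itself Lipschitz there. By Rademacher's theorem, $\psi$ is differentiable at a.e. point of $\R^d\setminus K$.

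Now fix such a differentiability point $x_1$ and take any $y\in T_{x_0}(x_1)$. Since $y\in K$ and $x_1\notin K$, the test function $f_y(z):=|x_0-y|^\alpha+\lambda|y-z|^\alpha$ is smooth near $x_1$, satisfies $f_y\ge\psi$ everywhere and $f_y(x_1)=\psi(x_1)$; comparing first-order expansions forces
\[\nabla\psi(x_1)=\nabla f_y(x_1)=\lambda\alpha|y-x_1|^{\alpha-2}(x_1-y),\]
and this identity must hold for every $y\in T_{x_0}(x_1)$. When $\alpha\in(0,1)$, taking norms gives $|\nabla\psi(x_1)|=\lambda\alpha|y-x_1|^{\alpha-1}$, and since $r\mapsto r^{\alpha-1}$ is strictly monotone on $(0,\infty)$ this pins down $|y-x_1|$; the direction $(x_1-y)/|x_1-y|$ is then read off from $\nabla\psi(x_1)$, so $y$ is uniquely determined.

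The remaining and more delicate case is $\alpha=1$, where the identity only determines the unit vector $(x_1-y)/|x_1-y|$, so a priori several minimizers might lie on a single ray emanating from $x_1$. To rule this out I would use a triangle-inequality argument: if $y_1,y_2\in T_{x_0}(x_1)$ both lie on that ray with $|y_1-x_1|<|y_2-x_1|$, then $|y_2-y_1|=|y_2-x_1|-|y_1-x_1|$, and equating $f_{y_1}(x_1)=f_{y_2}(x_1)$ rearranges to $|x_0-y_1|-|x_0-y_2|=\lambda|y_2-y_1|$, whereas the triangle inequality gives $|x_0-y_1|-|x_0-y_2|\le|y_1-y_2|$; comparing these yields $\lambda\le 1$, contradicting the hypothesis $\lambda>1$. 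This collinear subcase is the step I expect to be the main obstacle, and it is exactly where the strict inequality $\lambda>1$ in the statement is used.
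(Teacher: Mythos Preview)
Your proposal is correct and follows essentially the same route as the paper: fix $x_0$, use Rademacher on the locally Lipschitz value function on $\R^d\setminus K$, deduce from the envelope identity that $\nabla\psi(x_1)=\lambda\alpha|y-x_1|^{\alpha-2}(x_1-y)$ for every minimizer $y$, invert this when $\alpha<1$, and for $\alpha=1$ rule out two distinct collinear minimizers by the same triangle-inequality contradiction exploiting $\lambda>1$. The paper's presentation of the $\alpha=1$ step is a minor rearrangement of yours but the content is identical.
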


\begin{proof}
Fix $x_0$, set
$$c_{x_0}(x_1):=\min_{x\in K}\big\{|x_0-x|^\alpha+\lambda|x-x_1|^\alpha\big\},$$
and observe that $c_{x_0}$ is locally Lipschitz on $\R^d\setminus K$. It thus follows from Rademacher's theorem that almost every $x_1\in\R^d\setminus K$ is a point of differentiability of $c_{x_0}$, and for such a point, if $x\in T_{x_0}(x_1)$, we have
\[\nabla c_{x_0}(x_1)=\lambda\alpha|x_1-x|^{\alpha-2}(x_1-x)\ne0.\]
If $\alpha\in (0,1)$ this immediately gives the claim with
\[T_{x_0}(x_1)=\{x_1+(\lambda\alpha)^{\frac{1}{1-\alpha}} |\nabla c_{x_0}(x_1)|^{\frac{2-\alpha}{\alpha-1}} \nabla c_{x_0}(x_1)\}.\]
If $\alpha=1$ and $\lambda>1$, if both $x$ and $x'$ belong to $T_{x_0}(x_1)$ then $x_1, x$ and $x'$ are aligned, so that the triangle inequality between their differences is saturated. But if $x\in [x_1, x')$, by the definition of $T_{x_0}(x_1)$ and $\lambda>1$, we should also have
\[\begin{split}
c_{x_0}(x_1)&=|x-x_0|+\lambda|x-x_1|=|x'-x_0|+\lambda|x'-x_1|\\
&=|x'-x_0|+\lambda(|x-x_1|+|x'-x|)\\
&>|x'-x_0|+|x'-x|+\lambda|x-x_1|,
\end{split}\]
which is impossible by the triangle inequality, yielding the a.e. single-valuedness of $T_{x_0}$ in this case as well.
\end{proof}

\begin{proposition}\label{dlikeinteg}
Assume that either $\alpha=1$, $\lambda>1$ or $\alpha\in (0,1)$ and
\begin{itemize}
\item $K$ is the closure of an open, bounded set in $\R^d$ with a boundary of class $C^{1,1},$
\item $\mu_0$ is discrete and $\mu_0(K) = 0$,
\item $\mu_1$ is absolutely continuous and $\iK \cap \spt \mu_1 = \emptyset$.
\end{itemize}
Then, any solution $\mu$ of \eqref{eq:transportwcar} under the constraint $\mu\in \cP(K)$ is absolutely continuous with respect to the $(d-1)$-Hausdorff measure on $\partial K$. 
\end{proposition}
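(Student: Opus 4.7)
The plan is to exploit the discrete structure of $\mu_0$ to reduce the claim to an absolute continuity statement about a single push-forward, and then to establish that absolute continuity by parametrizing the fibers of the partial argmin map over $\partial K$ via a Lipschitz map.

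Write $\mu_0 = \sum_k m_k \delta_{x_0^k}$ with $x_0^k \notin K$ (since $\mu_0(K) = 0$). By Proposition \ref{prop:MultimargForm}, fix an optimal plan $\gamma \in \Pi(\mu_0, \mu_1)$ for the cost $c(x_0, x_1) := \min_{x \in K}\{c_0(x_0, x) + c_1(x, x_1)\}$ and a measurable selector $T$ of the argmin, with $\mu = T_\#\gamma$. Disintegrate $\gamma = \sum_k m_k\,\delta_{x_0^k} \otimes \gamma^k$; from $\mu_1 = \sum_k m_k \gamma^k$ and $\mu_1$ absolutely continuous, each $\gamma^k$ is absolutely continuous with respect to Lebesgue measure and supported in $\R^d\setminus \iK$ (hence in $\R^d \setminus K$, as $\partial K$ is Lebesgue negligible). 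By Lemma \ref{lemmaTuniv} the partial map $T_{x_0^k}(\cdot) := T(x_0^k, \cdot)$ is $\gamma^k$-a.e.\ single-valued, and by Lemma \ref{supportbyboundary} its image lies on $\partial K$. Since $\mu = \sum_k m_k (T_{x_0^k})_\# \gamma^k$, it is enough to prove that $(T_{x_0})_\# \nu$ is absolutely continuous with respect to $\mathcal{H}^{d-1}$ on $\partial K$ for every fixed $x_0 \notin K$ and every measure $\nu$ absolutely continuous, compactly supported in $\R^d \setminus K$.

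Fix such an $x_0$. Any $x_1 \in T_{x_0}^{-1}(p)$ with $x_1 \notin K$ writes $x_1 = p - r u_1$ with $r = |p - x_1| > 0$ and $u_1 \in S^{d-1}$, and the Lagrange first-order condition at $p \in \partial K$ takes the form
\[
\alpha |p - x_0|^{\alpha - 2}(p - x_0) + \alpha\lambda\, r^{\alpha - 1}\, u_1 \;=\; \kappa\, \nu(p)
\]
for some Lagrange multiplier $\kappa \in \R$. Imposing $|u_1| = 1$ yields a quadratic equation in $\kappa$ whose coefficients are explicit smooth functions of $r$, $|p - x_0|$ and $\langle \nu(p), (p - x_0)/|p - x_0|\rangle$; for each of its (at most two) real roots, one recovers $u_1 = u_1(p, r)$ in explicit closed form, smooth for $r > 0$. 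Set $\Psi(p, r) := p - r\,u_1(p, r)$ on each branch; taking the union gives $T_{x_0}^{-1}(p) \subset \Psi(\{p\} \times (0, R])$, where $R$ bounds the diameter of $\spt\nu \cup K$. In the special case $\alpha = 1$, $\lambda > 1$, the equation decouples and $u_1$ depends only on $p$, so $\Psi$ is manifestly Lipschitz. Granted Lipschitz regularity of $\Psi : \partial K \times [0, R] \to \R^d$ (using the $C^{1,1}$ regularity of $\partial K$ to get Lipschitz $\nu$, the smoothness of $p \mapsto (p - x_0)/|p - x_0|$ since $x_0 \notin K$, and the explicit formula for $u_1$), the conclusion is immediate: for any Borel $E \subset \partial K$ with $\mathcal{H}^{d-1}(E) = 0$, local $C^{1,1}$ charts of $\partial K$ yield that $E \times [0, R]$ has $\mathcal{H}^d$-measure zero, so $\Psi(E \times [0, R])$ is Lebesgue-null in $\R^d$, and
\[
(T_{x_0})_\#\nu(E) \;=\; \nu\big(T_{x_0}^{-1}(E)\big) \;\leq\; \nu\big(\Psi(E \times [0, R])\big) \;=\; 0.
\]
Summing over $k$ gives the absolute continuity of $\mu$ with respect to $\mathcal{H}^{d-1}$ on $\partial K$.

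The main obstacle is verifying Lipschitz regularity of $\Psi$ up to $r = 0$ in the concave regime $\alpha \in (0, 1)$: the factor $r^{\alpha - 1}$ in the Lagrange equation makes $u_1(p, \cdot)$ only Hölder of exponent $1 - \alpha$ near $r = 0$, but the extra $r$ multiplying $u_1$ in the definition of $\Psi$ should restore Lipschitz behaviour of $r \mapsto r\,u_1(p, r)$ up to $r = 0$. A careful asymptotic expansion of $u_1(p, r)$ as $r \to 0^+$, together with a clean patching of the two possible branches, is the only non-routine point of the argument.
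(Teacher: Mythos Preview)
Your overall strategy is exactly the paper's: decompose along the atoms of $\mu_0$, reduce to a single $x_0\notin K$, use the first-order optimality condition at $p\in\partial K$ to parametrize $T_{x_0}^{-1}(p)$ by a one-parameter family, and push null sets forward via a Lipschitz map $\partial K\times[0,R]\to\R^d$. For $\alpha=1$, $\lambda>1$ your argument and the paper's coincide (and indeed there is a single admissible root, so no branching).

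For $\alpha\in(0,1)$ the paper's execution differs from yours in two simplifying ways that dissolve precisely the obstacle you flag. First, instead of taking the distance $r=|p-x_1|$ as the auxiliary parameter and then solving a quadratic for the multiplier (which produces two branches and a square root that loses Lipschitz regularity where the discriminant vanishes), the paper takes the \emph{Lagrange multiplier itself} as the parameter. The first-order condition then reads $|p-x_1|^{\alpha-2}(x_1-p)=H_{x_0}(r,p):=r\,n(p)+\lambda^{-1}|p-x_0|^{\alpha-2}(p-x_0)$, which one inverts explicitly as $x_1=G_{x_0}(r,p)=p+|H_{x_0}(r,p)|^{(2-\alpha)/(\alpha-1)}H_{x_0}(r,p)$, a single-valued map with no branching. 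Second, rather than attempting an asymptotic expansion near $r=0$, the paper restricts to $A_1^\eps=\{x_1:\dist(x_1,K)\ge\eps\}$; on this set $|x_1-p|\ge\eps$ forces $|H_{x_0}|\le\eps^{\alpha-1}$, hence the multiplier is bounded by an explicit $R_\eps(x_0)$, and $G_{x_0}$ is Lipschitz on $[0,R_\eps(x_0)]\times\partial K$. One concludes $\mu_1^{x_0}(T_{x_0}^{-1}(A)\cap A_1^\eps)=0$ for every $\mathcal H^{d-1}$-null $A$, and lets $\eps\to0^+$. This $\eps$-cutoff trick is the clean substitute for your proposed asymptotic analysis; note also that your parametrization would still require a separate treatment of the discriminant-zero locus (as the paper does in the reversed case, Proposition~\ref{dlikeinteg} with roles of $\mu_0,\mu_1$ swapped), which you have not mentioned.
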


\begin{proof}
Since $\mu_0$ is discrete, we can write $\mu_0 = \sum_{x_0 \in A_0} p_{x_0} \delta_{x_0},$ with $A_0$ at most countable, disjoint from $K$ and $p_{x_0}>0$ for every $x_0\in A_0$. It follows from Proposition \ref{prop:MultimargForm} and Lemma \ref{lemmaTuniv} that there exists a transport plan $\gamma$ between $\mu_0$ and $\mu_1$, which can be written as
$$\gamma=\sum_{x_0\in A_0} p_{x_0}\delta_{x_0} \otimes \mu_1^{x_0},$$
such that defining $T_{x_0}$ as in Lemma \ref{lemmaTuniv} and $T(x_0, x_1)=T_{x_0}(x_1)$ one has
\[\mu=T_\# \gamma=\sum_{x_0\in A_0} p_{x_0} {T_{x_0}}_\# \mu_1^{x_0}.\]
Since the second marginal of $\gamma$ is $\mu_1$, we also have 
\[\mu_1 =\sum_{x_0 \in A_0} p_{x_0}\mu_1^{x_0},\]
so that all the measures $\mu_1^{x_0}$ are dominated by $1/p_{x_0}\mu_1$ hence absolutely continuous. We are thus left to show that for each fixed $x_0$ in the countable set $A_0$, the measure ${T_{x_0}}_\# \mu_1^{x_0}$ (which is supported by $\partial K$ by Lemma \ref{supportbyboundary}) is absolutely continuous with respect to the $(d-1)$-Hausdorff measure on $\partial K$ which from now on we denote by $\sigma_{(d-1), \partial K}$. We now fix $x_0\in A_0$ and a Borel subset $A$ of $\partial K$ and our aim is to bound
\[({T_{x_0}}_\# \mu_1^{x_0})(A)=\mu_1^{x_0} (T_{x_0}^{-1}(A)).\]
To this end, let us distinguish the two cases $\alpha=1$, $\lambda>1$ and $\alpha\in (0,1)$.

\smallskip

Assume $\alpha=1$ and $\lambda>1$. Since $\mu_1(K)=0$ (because $\mu_1$ is absolutely continuous, $\partial K$ is a smooth hypersurface and thus Lebesgue negligible and $\mu_1(\iK)=0$), we have
$$\mu_1^{x_0} (T_{x_0}^{-1}(A)\setminus K) =\mu_1^{x_0} (T_{x_0}^{-1}(A)).$$
Now take $x=T_{x_0}(x_1)\in \partial K$ with $x_1\notin K$ which is $\mu_1$-a.e. the case (so that $x\notin \{x_0, x_1\}$). By optimality, there exists $\beta\ge0$ such that
\[\widehat{x-x_0}+\lambda \widehat{x-x_1}+\beta n(x)=0,\]
where for $\xi\in\R^d\setminus\{0\}$, we have set $\widehat{\xi}=\xi/|\xi|$, and
where $n(x)$ is the outward normal to $\partial K$ at $x$. Using the fact that $\lambda \widehat{x-x_1}$ has norm $\lambda$ yields
\[\lambda^2=\beta^2 +1+2\beta n(x)\cdot \widehat{x-x_0}\]
whose only nonnegative root is
\[\beta=\beta_{x_0}(x):=-n(x)\cdot\widehat{x-x_0}+\sqrt{\lambda^2-1+(n(x)\cdot\widehat{x-x_0})^2},\]
so that
\[\lambda\widehat{x_1-x}=\beta_{x_0}(x) n(x)+\widehat{x-x_0}\]
and the right hand side is a Lipschitz function of $x$ thanks to our assumptions ($\partial K$ being $C^{1,1}$ and $x_0$ being at a positive distance from $K$, hence from $x$). Using again that $\lambda \widehat{ x-x_1}$ has norm $\lambda$, this shows that if $x=T_{x_0}(x_1)$ then for some $r\in [0, R]$ with $R=\lambda^{-1}\diam(\spt\mu_1-K)$ there holds
\[x_1=F_{x_0}(r,x):=x+r[\beta_{x_0}(x) n(x)+\widehat{x-x_0}].\]
Hence
\[\mu_1^{x_0}(T_{x_0}^{-1}(A))\leq\mu_1^{x_0}(F_{x_0}([0,R]\times A)).\]
If $\sigma_{(d-1),\partial K}(A)=0$, the smoothness of $K$ and the fact that $F_{x_0}$ is Lipschitz on $[0,R]\times \partial K$, readily imply that $F_{x_0}([0,R]\times A)$ is Lebesgue negligible. Hence $\mu_1^{x_0}(T_{x_0}^{-1}(A))=0$ and since this holds for any $x_0\in A_0$, we also have $\mu(A)=0$, which implies the absolute continuity of $\mu$ with respect to $\sigma_{(d-1),\partial K}$. 

\smallskip 

Let us now assume that $\alpha\in (0,1)$. To cope with the fact that $c_1(x,x_1)$ is not differentiable if $x=x_1$, it will be convenient to fix $\eps>0$ and to consider $x_1\in A_1^\eps$, where 
\[A_1^\eps:=\{x_1\in\spt(\mu_1) \ ; \ \; d(K,x_1)\ge\eps\}\]
and
$$d(K,x):=\min_{y\in K}|x-y|$$
is the Euclidean distance to $K$. If $x_1\in A_1^\eps\cap T_{x_0}^{-1}(x)$ with $x\in A$, it follows from the first-order optimality condition, there is some $r\ge0$ such that
\[x_1=G_{x_0}(r,x) \eqset x+|H_{x_0}(r,x)|^{\frac{2-\alpha}{\alpha-1}} H_{x_0}(r, x),\]
where
\[H_{x_0}(r, x)=r n(x)+\lambda^{-1}|x-x_0|^{\alpha-2} (x-x_0).\]
Now, note that
\[|H_{x_0}(r,x)| = |x_1 - x|^{\alpha - 1}.\]
This shows that
\begin{align*}
|r|&\le|x_1-x|^{\alpha-1}+\lambda^{-1}|x-x_0|^{\alpha-1}\\
&\le\eps^{\alpha-1}+\lambda^{-1}\max_{x\in K}|x-x_0|^{\alpha-1}=:R_\eps(x_0).
\end{align*}
Hence, $A_1^\eps \cap T_{x_0}^{-1}(x)$ is included in the image by $G_{x_0}$ of the set $\{(r,x), \; x\in A, \ r\in [0, R_\eps(x_0)]\}$. Since $G_{x_0}$ is Lipschitz (with a Lipschitz constant depending on $\eps$) on this set we obtain as soon as $\sigma_{(d-1), \partial K}(A)=0$
\begin{align*}
\mu_1^{x_0}(T_{x_0}^{-1}(A)) &= \mu_1^{x_0}(T_{x_0}^{-1}(A)\setminus K)= \lim_{\eps \searrow 0 }\mu_1^{x_0}(T_{x_0}^{-1}(A)\cap A_1^\eps) \\
&\leq \lim_{\eps \searrow 0 }\mu_1^{x_0}(G_{x_0}([0, R_\eps(x_0)] \times A)) =0.
\end{align*}
Thus we can conclude as before that $\mu$ is absolutely continuous.
\end{proof}

\begin{proposition}\label{dlikeinteg2}
Suppose in addition to the assumptions of Proposition \ref{dlikeinteg} that $\mu_0$ has finite support, $\mu_1$ has a bounded density with respect to the $d$-dimensional Lebesgue measure. If $\alpha\in(0,1)$, further assume that $K\cap\spt\mu_1=\emptyset$. Then $\mu$ has a bounded density with respect to the $(d-1)$-Hausdorff measure on $\partial K$.
\end{proposition}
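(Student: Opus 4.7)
The plan is to revisit the construction of Proposition \ref{dlikeinteg} and, instead of merely checking that the push-forwards $T_{x_0}^\#\mu_1^{x_0}$ annihilate $\sigma_{(d-1),\partial K}$-null sets, to obtain an explicit linear comparison with $\sigma_{(d-1),\partial K}$.

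First I would reuse the decomposition from the proof of Proposition \ref{dlikeinteg}: writing $\mu_0=\sum_{x_0\in A_0} p_{x_0}\delta_{x_0}$ (which is now a \emph{finite} sum) and $\mu_1=\sum_{x_0\in A_0}p_{x_0}\mu_1^{x_0}$, we have $\mu=\sum_{x_0\in A_0} p_{x_0} T_{x_0}^\#\mu_1^{x_0}$ and $\mu_1^{x_0}\le p_{x_0}^{-1}\mu_1$, so each $\mu_1^{x_0}$ has an $L^\infty$ density bounded by $p_{x_0}^{-1}\|\mu_1\|_\infty$. It therefore suffices to show that for each (of the finitely many) $x_0\in A_0$, the push-forward $T_{x_0}^\#\mu_1^{x_0}$ admits a bounded density with respect to $\sigma_{(d-1),\partial K}$, with a constant that is finite.

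For a Borel set $A\subset\partial K$, the proof of Proposition \ref{dlikeinteg} already provides the pointwise inclusions
\[
T_{x_0}^{-1}(A)\subset F_{x_0}([0,R]\times A)\qquad(\alpha=1,\ \lambda>1),
\]
respectively $T_{x_0}^{-1}(A)\cap A_1^\eps\subset G_{x_0}([0,R_\eps(x_0)]\times A)$ in the case $\alpha\in(0,1)$. In the case $\alpha=1$, the map $F_{x_0}$ is Lipschitz on $[0,R]\times\partial K$ (with a constant depending only on $x_0$, on $R$, and on the $C^{1,1}$-norm of $\partial K$, since the unit vector $\widehat{x-x_0}$ is Lipschitz on $\partial K$ because $x_0\notin K$). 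In the case $\alpha\in(0,1)$, the assumption $K\cap\spt\mu_1=\emptyset$ makes $\eps_0:=\dist(K,\spt\mu_1)>0$, so we can fix once and for all $\eps=\eps_0$, and then $G_{x_0}$ is Lipschitz on $[0,R_{\eps_0}(x_0)]\times\partial K$ with a uniform constant. By the standard Lipschitz estimate
\[
\mathcal{L}^d\!\bigl(F_{x_0}([0,R]\times A)\bigr)\le (\lip F_{x_0})^d\, R\, \sigma_{(d-1),\partial K}(A)
\]
(and the analogous bound for $G_{x_0}$), and absolute continuity of $\mu_1^{x_0}$ with $L^\infty$ density, we obtain a constant $C_{x_0}$ such that
\[
(T_{x_0}^\#\mu_1^{x_0})(A)=\mu_1^{x_0}(T_{x_0}^{-1}(A))\le C_{x_0}\,\sigma_{(d-1),\partial K}(A).
\]

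Summing the finitely many contributions $p_{x_0}C_{x_0}$ for $x_0\in A_0$ then yields the desired estimate $\mu(A)\le C\,\sigma_{(d-1),\partial K}(A)$ with $C<\infty$, i.e.\ $\mu$ has a bounded density with respect to $\sigma_{(d-1),\partial K}$. The only real obstacle is the uniform control of the Lipschitz constants of $F_{x_0}$, $G_{x_0}$; this is exactly why we need the finiteness of $\spt\mu_0$ (to avoid $\sup_{x_0\in A_0}\lip F_{x_0}=\infty$), the $C^{1,1}$ regularity of $\partial K$ (to ensure $n$ and the quantities $\beta_{x_0}$ are Lipschitz on $\partial K$), and, in the subadditive case $\alpha<1$, the separation $K\cap\spt\mu_1=\emptyset$ (to fix $\eps$ away from $0$ and thus keep $G_{x_0}$ Lipschitz).
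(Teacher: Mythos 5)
Your proof is correct and follows essentially the same route as the paper's: reuse the maps $F_{x_0}$ (resp. $G_{x_0}$ with $\eps=\eps_0$ fixed thanks to $K\cap\spt\mu_1=\emptyset$) from Proposition~\ref{dlikeinteg}, upgrade the null-set argument to a linear Lipschitz--Lebesgue comparison, and sum over the finitely many $x_0$. The only cosmetic difference is that you bound the density of each $\mu_1^{x_0}$ by $p_{x_0}^{-1}\|\mu_1\|_\infty$ and then sum $p_{x_0}C_{x_0}$, while the paper bounds $p_{x_0}\mu_1^{x_0}$ directly by $\|\mu_1\|_{L^\infty}\mathcal{L}^d$; these are equivalent.
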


\begin{proof}
In the case $\alpha =1,$ $\lambda > 1$ we can continue using the same notation and Lipschitz mapping $F_{x_0}$ and $R$ as in the proof of Proposition \ref{dlikeinteg} to conclude for any Borel subset $A$ of $\partial K$
\begin{align*}
\mu(A)&=\sum_{x_0\in \spt(\mu_0)} p_{x_0}\mu_1^{x_0}(T_{x_0}^{-1}(A))\\
&\le \sum_{x_0\in\spt(\mu_0)} p_{x_0}\mu_1^{x_0} (F_{x_0}([0,R]\times A))\\
&\le \sum_{x_0\in\spt(\mu_0)} \|\mu_1\|_{L^\infty} \mathcal{L}^d(F_{x_0}([0,R]\times A))\\
&\le C{\mathrm{card}}(\spt \mu_0)\|\mu_1\|_{L^\infty} R\sigma_{(d-1),\partial K}(A)
\end{align*}
where $C$ is a constant that only depends on the $C^{1,1}$ smoothness of $\partial K$ and the maximal Lipschitz constant of $F_{x_0}$ over $[0,R]\times\partial K$, with respect to $x_0\in \spt(\mu_0)$. This way we deduce that $\mu\in L^\infty(\partial K,\sigma_{(d-1),\partial K})$.

For the case $\alpha\in(0,1)$ we need in addition $K\cap\spt\mu_1=\emptyset$ to ensure that, again using the same arguments and notation as in the proof of Proposition \ref{dlikeinteg}, there is an $\eps_0>0$ such that $A^{\eps_0}_1=\spt(\mu_1)$. In this way, all the analysis from the previous proof can be carried through on $A^{\eps_0}_1$ and we obtain
\begin{align*}
\mu(A)&=\sum_{x_0\in \spt(\mu_0)} p_{x_0} \mu_1^{x_0}(T_{x_0}^{-1}(A)\cap A^{\eps_0}_1)\\
&\le\sum_{x_0\in\spt(\mu_0)}p_{x_0}\mu_1^{x_0} (G_{x_0}([0,R_{\eps_0}(x_0)]\times A))\\
&\le\sum_{x_0\in\spt(\mu_0)}\|\mu_1\|_{L^\infty}\mathcal{L}^d(G_{x_0}([0,R_{\eps_0}(x_0)]\times A))\\
&\le C {\mathrm{card}}(\spt\mu_0)\|\mu_1\|_{L^\infty}R_{\eps_0}(x_0)\sigma_{(d-1),\partial K}(A),
\end{align*}
where $C$ is a constant that only depends on the $C^{1,1}$ smoothness of $\partial K$ and the maximal with respect to $x_0\in \spt(\mu_0)$ Lipschitz constant of $G_{x_0}$ over $[0,R_{\eps_0}(x_0)]\times \partial K$.
\end{proof}

One might also be interested in the case that the distribution of residents represented by $\mu_0$ is absolutely continuous and $\mu_1$ is discrete. The case $\alpha \in (0,1)$ is completely symmetric as we have not assumed $\lambda >1$ in the previous proofs. However for the case $\alpha = 1, \lambda >1$, the proof slightly differs as we shall see below. Arguing as for the proof of Lemma \ref{lemmaTuniv}, we have:

\begin{lemma}\label{lemmareverse}
Assume that $c_0$ and $c_1$ are as in \eqref{distcost} (with $\alpha\in(0,1]$ and $\lambda>1$ if $\alpha=1$), and that $K$ is compact. Then for (Lebesgue-)almost every $x_0\in\R^d\setminus K$ and every $x_1$, the set
$$T_{x_1}(x_0):=\argmin_{x\in K}\big\{|x_0-x|^\alpha+\lambda|x-x_1|^\alpha\big\}$$
is a singleton.
\end{lemma}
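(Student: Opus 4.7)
The proof should go by a symmetric adaptation of the argument for Lemma \ref{lemmaTuniv}, with the roles of $x_0$ and $x_1$ swapped. Since both cost functions in \eqref{distcost} are distance-like, the scalar $\lambda$ in front of the second term plays no role when we differentiate in the $x_0$-variable, so the argument transfers immediately; only a couple of signs need to be tracked.

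Concretely, I would fix $x_1 \in \R^d$ and set
\[
\tilde c_{x_1}(x_0) := \min_{x \in K}\bigl\{|x_0-x|^\alpha + \lambda|x-x_1|^\alpha\bigr\}.
\]
Since $K$ is compact, $\tilde c_{x_1}$ is locally Lipschitz on $\R^d \setminus K$ (with a local Lipschitz constant depending only on $\alpha$, $\diam(K)$, and $\dist(x_0,K)$ when $\alpha<1$, and bounded by $1$ when $\alpha=1$). By Rademacher's theorem, almost every $x_0 \in \R^d \setminus K$ is a point of differentiability of $\tilde c_{x_1}$. At such $x_0$, a standard envelope/Danskin argument yields, for any $x \in T_{x_1}(x_0)$,
\[
\nabla \tilde c_{x_1}(x_0) \;=\; \alpha\,|x_0-x|^{\alpha-2}(x_0-x),
\]
and this vector is nonzero because $x_0 \notin K \ni x$.

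If $\alpha \in (0,1)$, inverting this relation (the map $\xi \mapsto \alpha|\xi|^{\alpha-2}\xi$ is a bijection from $\R^d\setminus\{0\}$ onto itself) gives an explicit formula for $x_0-x$ in terms of $\nabla \tilde c_{x_1}(x_0)$, so $x$ is uniquely determined and $T_{x_1}(x_0)$ is a singleton. If $\alpha = 1$ and $\lambda > 1$, the gradient only fixes $\widehat{x_0-x}$, so suppose $x, x' \in T_{x_1}(x_0)$ are both on the ray from $x_0$ in direction $\widehat{x-x_0}$, with $x \in [x_0,x')$. Equality of the two cost values then yields $|x'-x| = \lambda\bigl(|x-x_1|-|x'-x_1|\bigr)$, which combined with the triangle inequality $|x-x_1|-|x'-x_1| \le |x'-x|$ forces $\lambda \le 1$, contradicting $\lambda > 1$; hence $x=x'$.

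I do not expect any serious obstacle: the proof of Lemma \ref{lemmaTuniv} already contains every ingredient. The only point to check carefully is that the Rademacher/envelope step really applies to $\tilde c_{x_1}$ as a function of $x_0$ (it does, because only the first term depends on $x_0$ and it is locally Lipschitz and concave-free away from $x = x_0$), and that in the case $\alpha=1$, $\lambda>1$ one still arrives at the same triangle-inequality contradiction after swapping the roles.
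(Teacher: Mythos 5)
Your argument for $\alpha\in(0,1)$ is correct and matches the paper's strategy: Rademacher on $\tilde c_{x_1}$, envelope theorem, and the fact that $\xi\mapsto\alpha|\xi|^{\alpha-2}\xi$ is injective, which pins down $x_0-x$ and hence $x$.

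For $\alpha=1$, $\lambda>1$, however, the argument does not go through, and your final step contains an algebra error. Starting, as you do, from the alignment $x\in[x_0,x')$ (so $|x'-x_0|=|x-x_0|+|x'-x|$) and equal costs, one gets
\[
|x'-x|=\lambda\bigl(|x-x_1|-|x'-x_1|\bigr),
\]
which you correctly record. The triangle inequality gives $|x-x_1|-|x'-x_1|\le|x'-x|$, so substituting yields
\[
|x'-x|=\lambda\bigl(|x-x_1|-|x'-x_1|\bigr)\le\lambda\,|x'-x|,
\]
i.e.\ $\lambda\ge1$ (dividing by $|x'-x|>0$), which is \emph{consistent} with, not contradictory to, $\lambda>1$. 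Your claim that it ``forces $\lambda\le1$'' is the error. This is not just a sign to fix: the original argument in Lemma \ref{lemmaTuniv} works precisely because $\lambda>1$ sits on the term whose variable is being differentiated, so that the gradient of $c_{x_0}$ in $x_1$ has norm $\lambda$, and after using $|x'-x_1|=|x-x_1|+|x'-x|$ the cost through $x'$ exceeds the cost through $x$ by the strict amount $(\lambda-1)|x'-x|$ after one application of the triangle inequality. When you instead differentiate in $x_0$, the gradient has norm $1$, the coefficient $\lambda$ multiplies the $x_1$-term, and the comparison goes the opposite way: you only get $\lambda\ge1$. In fact, a rescaling $\tilde c_{x_1}(x_0)=\lambda\min_{x\in K}\{\lambda^{-1}|x_0-x|+|x-x_1|\}$ shows that Lemma \ref{lemmareverse} for $\alpha=1$ is exactly Lemma \ref{lemmaTuniv} with $\lambda$ replaced by $1/\lambda<1$, i.e.\ the regime in which Lemma \ref{lemmaTuniv}'s proof is not applicable. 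The case $\alpha=1$ therefore genuinely requires a different argument than the one you (and the paper, tersely) invoke, and you should not leave it at ``the proof transfers immediately.''
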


The analogue of Proposition \ref{dlikeinteg}, then reads 

\begin{proposition}
Assume that either $\alpha=1$, $\lambda>1$ or $\alpha\in (0,1)$ and
\begin{itemize}
\item $K$ is the closure of an open, bounded set in $\R^d$ with a boundary of class $C^{1,1},$
\item $\mu_0$ is absolutely continuous and $\iK\cap\spt\mu_0 = \emptyset,$
\item $\mu_1$ is discrete and $\mu_1(K) = 0.$
\end{itemize}
Then, any solution $\mu$ of \eqref{eq:transportwcar} under the constraint $\mu\in \cP(K)$ is absolutely continuous with respect to the $(d-1)$-Hausdorff measure on $\partial K$. 
\end{proposition}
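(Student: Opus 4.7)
The plan is to mirror the proof of Proposition \ref{dlikeinteg}, swapping the roles of $\mu_0$ and $\mu_1$ throughout and invoking Lemma \ref{lemmareverse} in place of Lemma \ref{lemmaTuniv}. Writing the discrete measure $\mu_1=\sum_{x_1\in A_1}q_{x_1}\delta_{x_1}$ with $A_1$ countable, $A_1\cap K=\emptyset$ and $q_{x_1}>0$, Proposition \ref{prop:MultimargForm} together with Lemma \ref{lemmareverse} yields an optimal plan of the form $\gamma=\sum_{x_1\in A_1}q_{x_1}\,\mu_0^{x_1}\otimes\delta_{x_1}$, each $\mu_0^{x_1}\le q_{x_1}^{-1}\mu_0$ absolutely continuous, and
\[\mu=T^{\#}\gamma=\sum_{x_1\in A_1}q_{x_1}(T_{x_1})^{\#}\mu_0^{x_1},\qquad T(x_0,x_1):=T_{x_1}(x_0).\]
By Lemma \ref{supportbyboundary} each summand is concentrated on $\partial K$, so it suffices to show, for every fixed $x_1\in A_1$ and every Borel $A\subset\partial K$ with $\sigma_{(d-1),\partial K}(A)=0$, that $\mu_0^{x_1}(T_{x_1}^{-1}(A))=0$; using the absolute continuity of $\mu_0^{x_1}$ and $\mathcal{L}^d(\partial K)=0$, this reduces to controlling the Lebesgue measure of $T_{x_1}^{-1}(A)\cap(\R^d\setminus K)$.

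The heart of the argument is a Lipschitz parameterization of this preimage via the first-order KKT condition
\[\alpha|x-x_0|^{\alpha-2}(x-x_0)+\lambda\alpha|x-x_1|^{\alpha-2}(x-x_1)+\beta\,n(x)=0,\qquad\beta\ge 0,\]
at the optimal pivot $x=T_{x_1}(x_0)\in\partial K$. For $\alpha\in(0,1)$, solving this system for $x_0$ as a function of the pivot $x$ and a rescaled multiplier $r$ is formally identical to the computation in Proposition \ref{dlikeinteg} after exchanging indices $0\leftrightarrow 1$; one obtains a Lipschitz map $\tilde G_{x_1}\colon[0,R_\eps]\times\partial K\to\R^d$ once we restrict to the $\eps$-truncation $A_0^\eps:=\{x_0\in\spt\mu_0:d(x_0,K)\ge\eps\}$. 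Since $x_1$ lies at positive distance from $K$, no truncation on its side is needed, and letting $\eps\to 0^+$ uses $\mu_0(K)=0$ to conclude as in the original proof.

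The case $\alpha=1$, $\lambda>1$ is genuinely different. Writing $\widehat{x-x_0}=-\lambda\widehat{x-x_1}-\beta\,n(x)$ and imposing unit norm yields
\[\beta^{2}+2\lambda\,a(x)\,\beta+(\lambda^{2}-1)=0,\qquad a(x):=n(x)\cdot\widehat{x-x_1},\]
whose nonnegative roots $\beta_{x_1}^{\pm}(x)$ exist exactly on $\overline U:=\{a\le-\sqrt{1-1/\lambda^{2}}\}\subset\partial K$; outside $\overline U$ no $x_0$ admits $x$ as its optimal pivot. On each $V_\delta:=\{a\le-\sqrt{1-1/\lambda^{2}}-\delta\}$, $\delta>0$, the discriminant is bounded away from zero, so $\beta_{x_1}^{\pm}$ is Lipschitz there and the parameterization $F_{x_1}^{\pm}(r,x):=x-r[\lambda\widehat{x-x_1}+\beta_{x_1}^{\pm}(x)n(x)]$ is Lipschitz on $[0,R]\times V_\delta$, with $R:=\diam(\spt\mu_0\cup K)$; exhausting $\{a<-\sqrt{1-1/\lambda^{2}}\}$ by $V_{1/n}$ handles those preimages. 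The remaining ones map to the \emph{caustic} $\partial U=\{a=-\sqrt{1-1/\lambda^{2}}\}$, which is the principal obstacle: the two branches $\beta_{x_1}^{\pm}$ merge with a $\sqrt{\,\cdot\,}$-type H\"older singularity, so no uniform Lipschitz bound propagates up to $\partial U$. The resolution is the algebraic observation that on $\partial U$ both roots collapse to the \emph{constant} $\beta=\sqrt{\lambda^{2}-1}$, so $F_{x_1}$ is in fact Lipschitz on $[0,R]\times\partial U$; combined with $\mathcal{H}^{d-1}(A\cap\partial U)=0$ and the Hausdorff product estimate $\mathcal{H}^{d}([0,R]\times(A\cap\partial U))=0$, this yields a Lebesgue-null image for the caustic contribution as well, completing the proof.
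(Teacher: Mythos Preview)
Your proposal is correct and follows essentially the same route as the paper. Both arguments reduce the $\alpha=1$, $\lambda>1$ case to the quadratic $\beta^{2}+2\lambda a(x)\beta+(\lambda^{2}-1)=0$ with $a(x)=n(x)\cdot\widehat{x-x_1}$, and then split the target set according to whether the discriminant $(\lambda a)^2+1-\lambda^2$ vanishes or is bounded away from zero: your caustic $\partial U$ is the paper's set $A_0$, and your $V_\delta$ correspond to the paper's $A_\delta$. The only cosmetic difference is that on the zero-discriminant set you note that $\beta$ collapses to the \emph{constant} $\sqrt{\lambda^2-1}$, whereas the paper simply uses that the merged root $-\lambda a(x)$ is a Lipschitz function of $x$; both observations yield the required Lipschitz parameterization on that stratum.
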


\begin{proof}
As already explained, the case $\alpha>1$ can be handled exactly as for Proposition \ref{dlikeinteg}, we shall therefore assume that $\alpha=1$ and $\lambda>1$. We write $\mu_1 = \sum_{x_1 \in A_1} p_{x_1} \delta_{x_1}$, with $A_1$ countable and $p_{x_1}>0.$ It follows from Proposition \ref{prop:MultimargForm} and Lemma \ref{lemmareverse} that there exists a transport plan $\gamma$ between $\mu_0$ and $\mu_1$, which can be written as
$$\gamma=\sum_{x_1\in A_1} \mu_0^{x_1} \otimes p_{x_1}\delta_{x_1},$$
such that defining $T_{x_1}$ as in Lemma \ref{lemmaTuniv} and $T(x_0, x_1)=T_{x_1}(x_0)$ one has
\[\mu=T_\# \gamma=\sum_{x_1\in A_1} p_{x_1} {T_{x_1}}_\# \mu_0^{x_1}.\]
Since the first marginal of $\gamma$ is $\mu_0$, $\mu_0^{x_1}$ is absolutely continuous for every $x_1\in A_1$. We are thus left to show that for each fixed $x_1$ in the countable set $A_1$, the measure ${T_{x_1}}_\# \mu_0^{x_1}$ is absolutely continuous with respect to the $(d-1)$-Hausdorff measure on $\partial K$ which from now on we denote by $\sigma_{(d-1), \partial K}$. We now fix $x_1\in \spt(\mu_1)$ and a Borel subset $A$ of $\partial K$ and our aim is to bound
\[({T_{x_1}}_\# \mu_0^{x_1})(A)=\mu_0^{x_1} (T_{x_1}^{-1}(A)).\]
 Since $\mu_0(K)=0$, we have $\mu_0^{x_1} (T_{x_1}^{-1}(A)\setminus K) =\mu_0^{x_1} (T_{x_1}^{-1}(A))$. Now take $x=T_{x_1}(x_0)\in \partial K$ with $x_0\notin K$. By optimality, there exists $\beta\ge0$ such that
\[\widehat{x-x_0}+\lambda \widehat{x-x_1}+\beta n(x)=0 \mbox{ where for $\xi\in\R^d\setminus\{0\}$, we have set } \widehat{\xi}=\xi/|\xi|\]
where $n(x)$ is the outward normal to $\partial K$ at $x$. 
This time our aim is to write, for fixed $x_1$, $x_0$ as a Lipschitz function of $x$ and a length factor, so we proceed as follows.
Using the fact that $\lambda \widehat{x-x_1}$ has norm $\lambda$ yields
\[1=\beta^2 +\lambda^2+2\beta \lambda n(x)\cdot \widehat{x-x_1}.\]
This time, it is possible that there are two positive solutions for $\beta.$ We denote them
\begin{eqnarray*}
\beta^+_{x_1}(x) &\eqset -\lambda n(x) \cdot \widehat{x-x_1} + \sqrt{(\lambda n(x)\cdot\widehat{x-x_1})^2 + 1 - \lambda^2}, \\
\beta^-_{x_1}(x) &\eqset -\lambda n(x) \cdot \widehat{x-x_1} - \sqrt{(\lambda n(x)\cdot\widehat{x-x_1})^2 + 1 - \lambda^2}.
\end{eqnarray*}
Hence, we have one of the following equalities is satisfied by $(x_0,x,x_1)$
\begin{eqnarray*}
x_0 &= x + r(\lambda\widehat{x-x_1}+\beta^+_{x_1}(x)n(x)) =: F^+_{x_1}(r,x), \\
x_0 &= x + r(\lambda\widehat{x-x_1}+\beta^-_{x_1}(x)n(x)) =: F^-_{x_1}(r,x),
\end{eqnarray*}
where $r\in [0,R]$ and $R=\diam(\spt\mu_0-K)$.

Consider now a Borel set $A \subset \partial K$ with $\sigma_{(d-1), \partial K}(A) = 0.$
We distinguish the cases where the discriminant $(\lambda n(x)\cdot\widehat{x-x_1})^2 + 1 - \lambda^2$ is zero or positive
\begin{eqnarray*}
A_0 &\eqset &\left\{x \in A: (\lambda n(x)\cdot\widehat{x-x_1})^2+1-\lambda^2=0\right\},\\
A_{>} &\eqset &\left\{x \in A: (\lambda n(x)\cdot\widehat{x-x_1})^2+1-\lambda^2>0\right\}\\
&= &\bigcap_{\delta >0}\underbrace{\left\{x \in A: (\lambda n(x)\cdot\widehat{x-x_1})^2 + 1 - \lambda^2 \geq \delta\right\}}_{=: A_\delta}.
\end{eqnarray*}
Since $F_{x_1}^+$ and $F_{x_1}^-$ agree with Lipschitz functions on $[0,R]\times A_0$ and $[0,R]\times A_\delta$ for $\delta>0$ fixed, we obtain
\begin{align*}
\mu_0^{x_1}(T_{x_1}^{-1}(A))
&\le\mu_0^{x_1}(T_{x_1}^{-1}(A_0))+\lim_{\delta\searrow0}\mu_0^{x_1}(T_{x_1}^{-1}(A_\delta))\\
&\le\mu_0^{x_1}(F^+_{x_1}([0,R]\times A_0))+\lim_{\delta\searrow0}\left(\mu_0^{x_1}(F^+_{x_1}([0,R]\times A_\delta))+\mu_0^{x_1}(F^-_{x_1}([0,R]\times A_\delta))\right)\\
&=0,
\end{align*}
as required.
\end{proof}

It is unclear whether an $L^\infty$-bound can be obtained with the same proof strategy since the Lipschitz constant of the maps $F_{x_1}^+$ and $F_{x_1}^-$ may blow up as $\delta \rightarrow 0^+$. In addition, in Proposition \ref{dlikeinteg} the smoothness of $K$ is crucial, as the example below shows.

\begin{example}\label{edeltadist}
In the two-dimensional case take as $K$ the square $\{|x|+|y|\le1\}$ and consider the distance-like cost of Proposition \ref{dlikeinteg} with $\alpha=1$ and $\lambda>1$. Take as $\mu_0$ the Lebesgue measure on the disc $B(x_0,r)$ and as $\mu_1$ the Lebesgue measure on the disc $B(x_1,r)$, with $x_0=(-a,0)$ and $x_1=(a,0)$ as in Figure \ref{figdisc}. The optimal pivot measure $\mu$ has in this case a part proportional to the Dirac mass $\delta_{(1,0)}$ and in some cases, when $\lambda$ is large, $a$ is large, and $r$ is small, actually reduces to the Dirac mass $\delta_{(1,0)}$.

\begin{figure}[htbp]
\hskip-12cm
\begin{tikzpicture}
\tikz\draw[thick]
(-5,2) circle [radius=1cm] (-5,2) node {$\mu_0$}
(-2,2)--(0,4)--(2,2)--(0,0)--(-2,2) (0,2) node {K}
(5,2) circle [radius=1cm] (5,2) node {$\mu_1$};
\end{tikzpicture}
\caption{A nonsmooth constraint set $K$ may provide a singular optimal pivot measure.}\label{figdisc}
\end{figure}
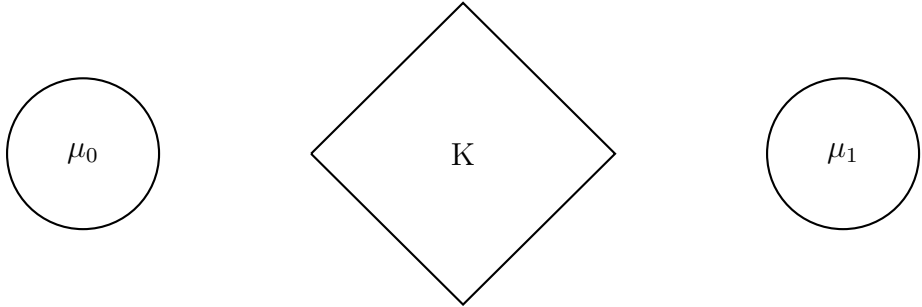
\end{example}

\subsection{Density constrained solutions are bang-bang}

We end this section by observing that in the case of a density constraint $\mu \leq \phi$, for distance-like costs minimizers are of bang bang type.

\begin{proposition}\label{bangbangdistlike}
Assume that $c_0$ and $c_1$ are as in \eqref{distcost} with $\lambda>1$ if $\alpha=1$, that $\phi\in L^1(\R^d)$ is nonnegative with compact support, that $\int\phi\,dx>1$, and that both $\spt(\phi)\cap \spt(\mu_0)$ and $\spt(\phi)\cap\spt(\mu_1)$ are Lebesgue negligible. Then any solution $\mu$ of \eqref{eq:transportwcar} under the constraint $\mu\le\phi$ is of the form $\mu=\phi\mathds{1}_E$ for some measurable subset $E$ of $\spt(\phi)$.
\end{proposition}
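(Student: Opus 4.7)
The strategy is to mimic the argument of Corollary \ref{corodiscret}, with the discreteness assumption on $\mu_0,\mu_1$ replaced by the hypothesis that $\spt(\phi)\cap\spt(\mu_i)$ is Lebesgue negligible for $i=0,1$, and with the specific distance-like structure providing the analogue of the null-set condition \eqref{nullset}. I first invoke Proposition \ref{densityconstraintsol} to extract dual optimizers $(\varphi_0,\varphi_1)$, and replace them by their $c_0$-/$c_1$-concave envelopes via \eqref{cctrick} (which only improves the objective). The key regularity observation is that $|x-y|^\alpha$ is $C^1$ in $x$ away from $y$ with locally bounded gradient, so $\varphi_0$ and $\varphi_1$ are locally Lipschitz on the open set $\spt(\phi)\setminus(\spt(\mu_0)\cup\spt(\mu_1))$, which by the hypothesis on supports has full Lebesgue measure in $\spt(\phi)$. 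Rademacher's theorem then yields differentiability of $\varphi_0$ and $\varphi_1$ at Lebesgue-a.e. $x\in\spt(\phi)$.

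Next, by the complementary slackness \eqref{cn3dc} together with the observation that (up to null sets) $\{0<\mu<\phi\}\subset\spt(\mu)\cap\spt(\phi-\mu)$, one has $\varphi_0+\varphi_1=0$ on $\{0<\mu<\phi\}$. At any Lebesgue density point of $\{0<\mu<\phi\}$ where both $\varphi_i$ are differentiable, I conclude that $\nabla\varphi_0(x)+\nabla\varphi_1(x)=0$. Combining this with the Kantorovich relations \eqref{cn1dc}-\eqref{cn2dc} (which give $\nabla\varphi_0(x)=\nabla_x c_0(x_0,x)$ for some $x_0\in\spt(\mu_0)$ with $(x_0,x)\in\spt(\gamma_0)$, and analogously for $x_1$), the gradient identity becomes
\[
|x-x_0|^{\alpha-2}(x-x_0)+\lambda|x-x_1|^{\alpha-2}(x-x_1)=0
\]
for some $(x_0,x_1)\in\spt(\mu_0)\times\spt(\mu_1)$.

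In the case $\alpha=1$ with $\lambda>1$, this reads $\widehat{x-x_0}+\lambda\widehat{x-x_1}=0$; taking Euclidean norms forces $1=\lambda$, a contradiction. Consequently, the set of admissible $x$ is empty and $\{0<\mu<\phi\}$ is Lebesgue negligible, giving the bang-bang conclusion $\mu=\phi\mathds{1}_E$ with $E=\{\mu=\phi\}\cap\spt(\phi)$. In the case $\alpha\in(0,1)$, the identity instead forces $x_0,x,x_1$ to be collinear, with $x$ strictly between the other two at the fixed barycentric ratio $|x-x_0|/|x_1-x|=\lambda^{1/(\alpha-1)}$; equivalently, $x=(1-t)x_0+tx_1$ for the constant $t=(1+\lambda^{1/(1-\alpha)})^{-1}$.

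The main obstacle is precisely the $\alpha\in(0,1)$ case: unlike in Corollary \ref{corodiscret}, the pair $(x_0,x_1)$ ranges over the possibly uncountable set $\spt(\mu_0)\times\spt(\mu_1)$, so one cannot conclude directly that $\{0<\mu<\phi\}$ is Lebesgue null by taking a countable union. My plan for this step is to exploit the twist property to view $x_0(x)$ and $x_1(x)$ as measurable maps of $x$, explicitly given by
\[
x-x_0(x)=\alpha^{1/(1-\alpha)}|\nabla\varphi_0(x)|^{(2-\alpha)/(\alpha-1)}\nabla\varphi_0(x),
\]
and analogously for $x_1$ (with $\lambda\alpha$ in place of $\alpha$), and then to combine the constraint $x_0(x)\in\spt(\mu_0)$, $x_1(x)\in\spt(\mu_1)$ with the identity $x=(1-t)x_0(x)+tx_1(x)$ via an area/coarea argument. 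The delicate point here will be to leverage the Lebesgue negligibility of $\spt(\phi)\cap\spt(\mu_i)$ to rule out a positive-measure set of such $x$; one natural route is to reduce to the discrete case of Corollary \ref{corodiscret} by approximating $\mu_0,\mu_1$ by discrete measures whose atoms are placed off the (negligible) intersection with $\spt(\phi)$, and then passing to the limit using the stability of bang-bang solutions expressed in terms of the underlying set $E$.
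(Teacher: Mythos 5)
Your treatment of the case $\alpha=1$, $\lambda>1$ is essentially the same as the paper's and is fine: differentiability of the $c$-concave potentials on $\spt(\phi)\setminus(\spt(\mu_0)\cup\spt(\mu_1))$, the identity $\nabla\varphi_0+\nabla\varphi_1=0$ a.e.\ on $\{0<\mu<\phi\}$, and then $\widehat{x-x_0}+\lambda\,\widehat{x-x_1}=0$ is impossible when $\lambda\ne1$. (One small technical point worth making explicit: the paper works on the sets $B_\delta:=\{x\in\spt(\phi):d(x,\spt(\mu_0)\cup\spt(\mu_1))\ge\delta\}$ and $A_\eps:=\{\eps<\mu<\phi-\eps\}$ so that Lipschitz bounds are uniform before passing to the limits $\delta,\eps\to0^+$; your ``locally Lipschitz on an open set of full measure'' shortcut conveys the same idea.)

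For $\alpha\in(0,1)$ there is a genuine gap. You correctly derive that at a.e.\ $x\in\{0<\mu<\phi\}$ there are $(x_0,x_1)\in\spt(\mu_0)\times\spt(\mu_1)$ with $x$ on the open segment $(x_0,x_1)$ at a fixed ratio, and you correctly observe that the countable-union trick of Corollary~\ref{corodiscret} is unavailable. But neither of your two proposed escape routes closes the argument. The coarea/twist route does not obviously ``rule out a positive-measure set of such $x$'': the hypotheses only say $\spt(\phi)\cap\spt(\mu_i)$ is negligible, not that $\spt(\mu_i)$ itself is small, so there is no measure-theoretic obstruction to a positive-measure set of $x$ mapping measurably into $\spt(\mu_0)\times\spt(\mu_1)$ and lying on the prescribed segments. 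The approximation-by-discrete-measures route relies on a ``stability of bang-bang solutions'' that you do not prove and that is not clear: weak-$*$ limits of measures of the form $\phi\mathds{1}_{E_n}$ need not be of that form, and passing to the limit in the sets $E_n$ would need a separate compactness argument. The ingredient you are missing is Corollary~\ref{minimaloustidefi}: it says that at such $x\in\spt(\mu)\cap\spt(\phi-\mu)$, the function $f_{x_0,x_1}:=c_0(x_0,\cdot)+c_1(\cdot,x_1)$ attains its minimum over the whole set $\spt(\phi-\mu)$ at $x$, i.e.\ $\spt(\phi-\mu)\subset\{f_{x_0,x_1}\ge f_{x_0,x_1}(x)\}$. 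Since $x$ lies strictly between $x_0$ and $x_1$, the Hessian $D^2f_{x_0,x_1}(x)$ is proportional to $\id+(\alpha-2)\,e\otimes e$ with $e=\widehat{x_1-x_0}$: it has a strictly negative eigenvalue in the direction $e$ and is positive definite on $e^\perp$, so $x$ is a saddle point. Hence near $x$ the superlevel set $\{f_{x_0,x_1}\ge f_{x_0,x_1}(x)\}$, and a fortiori $\spt(\phi-\mu)$, is confined to a strict (quadratic) cone transversal to $e$. This contradicts $x$ being a point of Lebesgue density $1$ of $A_\eps\subset\spt(\phi-\mu)$, which is the crux of the paper's proof and is absent from your proposal.
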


\begin{proof}
Let us start with the case $\alpha=1$, $\lambda>1$ and define $A:=\{0<\mu<\phi\}$, we then consider (Lipschitz) potentials $\varphi_0$ and $\varphi_1$ as in the proof of Corollary \ref{corodiscret}. A.e. point of $A$ is a differentiability point of $\varphi_0$ and $\varphi_1$, satisfies $\nabla \varphi_0+ \nabla \varphi_1=0$ and lies in $\R^d\setminus(\spt(\mu_0)\cup\spt(\mu_1))$. Hence arguing as in the proof of Corollary \ref{corodiscret}, for a.e. $x$ in $A$ one can find $x_0\in \spt(\mu_0)\setminus\{x\}$ and $x_1\in\spt(\mu_1)\setminus\{x\}$ such that
\[0=\nabla\varphi_0(x)+\nabla\varphi_1(x)=\frac{x-x_0}{|x-x_0|}+\lambda\frac{x-x_1}{|x-x_1|}\]
which is impossible since $\lambda>1$. This shows that $A$ is negligible and ends the proof for this case.

Consider now the slightly more complicated case where $\alpha \in (0,1)$, since $x\mapsto|x-x_0|^\alpha$ is Lipschitz only away from $x_0$, it is convenient for $\delta>0$ to introduce the set 
\[B_{\delta}:=\{x\in \spt(\phi) \; : \; d(x, \spt(\mu_0)\cup \spt(\mu_1))\geq \delta\}\]
on $B_\delta$ the potentials $\varphi_0$ and $\varphi_1$ are Lipschitz and we can find a subset $\tB_\delta$ of $B_{\delta}$ with $B_\delta \setminus \tB_\delta$ negligible such that $\varphi_0$ and $\varphi_1$ are differentiable on $\tB_\delta$. Consider now for $\eps>0$
\[A_\eps:=\{\eps <\mu <\phi-\eps\}\]
and let $\tA_\eps$ be the subset (of full Lebesgue measure in $A_\eps$ by Lebesgue's density Theorem) consisting of its points of density $1$, i.e. 
\[\tA_\eps:=\{y \in A_\eps \; : \; \lim_{r\to 0^+} \frac{ \mathcal{L}^d(  B(y,r)\cap A_\eps)  }{ \mathcal{L}^d( B(y,r))}=1\}.\]
Note that $\tA_\eps\subset\spt(\phi-\mu)$ 	and, arguing as before, for a.e. $x\in \tA_\eps \cap \tB_\delta$, we can find $(x_0, x_1)\in \spt(\mu_0)\times \spt(\mu_1)$ such that
\[\nabla \varphi_0(x)+\nabla\varphi_1(x)=\nabla f_{x_0, x_1}(x)=0,\]
where $f_{x_0,x_1}(x):=|x-x_0|^\alpha+\lambda|x-x_1|^\alpha$. Moreover we know from Corollary \ref{minimaloustidefi} that $\spt(\phi-\mu)$  is included in the level set $f_{x_0, x_1}\geq f_{x_0, x_1}(x)$ and so is  $A_\eps$, up to a Lebesgue negligible set, by continuity of $f_{x_0, x_1}$. Since $x \notin \{x_0, x_1\}$ is a critical point of $f_{x_0, x_1}$ we have $x_1\neq x_0$ and $x$ belongs to $[x_0, x_1]$, 
\[e:=\widehat{x-x_0}= \widehat{x_1-x}=\widehat{x_1-x_0}\]
and the  Hessian $D^2 f_{x_0, x_1}$ of $f_{x_0, x_1}$ at $x$ takes the form
\[D^2 f_{x_0,x_1}(x)=(\alpha|x-x_0|^{\alpha-2}+\lambda\alpha|x-x_0|^{\alpha-2})(\id+(\alpha-2)e\otimes e)\]
which shows that $x$ is a saddle-point of $f_{x_0, x_1}$, its hessian having a negative eigenvalue with eigenvector $e$ and being positive definite on $e^\perp$. Since for $y\in A_\eps$, we have 
\[f_{x_0,x_1}(y)=f_{x_0, x_1} (x)+ \frac{1}{2}  D^2 f_{x_0, x_1}(x)(y-x,y-x)+ o(\vert y-x\vert^2) \geq f_{x_0, x_1}(x) \]
we deduce that for $r>0$ small enough and some positive constant $\kappa$, whenever $y\in A_\eps \cap B(x,r)$, one has $y\in C_{x, e, \kappa}$ where
\[C_{x, e, \kappa}:=\{y \in \R^d \; : \;  \vert e \cdot (y-x)\vert \leq \kappa \vert (\id -e\otimes e) (y-x)\vert\}.\]
 Hence, for small $r>0$, $A_\eps \cap B(x,r) $ should lie inside  the strict cone $C_{x, e, \kappa}$ so that 
\[\limsup_{r\to 0^+} \frac{ \mathcal{L}^d(  B(x,r)\cap A_\eps)  }{ \mathcal{L}^d( B(x,r))} \leq \limsup_{r\to 0^+} \frac{ \mathcal{L}^d(  B(x,r)\cap C_{x, e, \kappa}  )  }{ \mathcal{L}^d( B(x,r))} <1\]
contradicting the fact that $x$ is a point of density $1$ of $A_\eps$.  This shows that $A_\eps\cap B_\delta$ is negligible, letting $\delta\to 0^+$ we find that $A_\eps$ is negligible and since this is true for every $\eps>0$, the desired conclusion follows.
\end{proof}

\section{The case of strictly convex costs with a convex location constraint}\label{sec:convex}

We now consider \eqref{eq:transportwcar} in the case of the location constraint $\cA=\cP(K)$ where $K$ is a compact convex subset of $\R^d$ with nonempty interior and $c_0$ and $c_1$ satisfy the strong convexity and smoothness assumptions:
\begin{equation}\label{sclike}
c_i (x,y):=F_i(y-x), \; F_i\in C^2(\R^d), \; \lambda \id \leq D^2 F_i \leq \Lambda \id, \; i=0, \; 1
\end{equation}
for some constants $0<\lambda \leq \Lambda$. Since these costs are twisted, \eqref{eq:transportwcar} in the case of the location constraint $\cA=\cP(K)$ admits a unique solution as soon as $\mu_0$ (or $\mu_1$) is absolutely continuous, see Remark \ref{rem:uniqu}. 

\begin{example}\label{edeltasquare}
Consider the two dimensional case with a location constraint given by the square $K$ of Example \ref{edeltadist}; take $\mu_0=\delta_{(-2,0)}$, $\mu_1$ uniform on the ball of radius $1$ centered at $(3,0)$, $c_0(x,y)=\vert x-y\vert^2$ and $c_1(x,y)=2\vert x-y\vert^2$. Then by a direct application of Proposition \ref{prop:MultimargForm}, the (unique) solution of \eqref{eq:transportwcar} is explicit: it is the image of the uniform measure on the ball $B$ of radius $2/3$ centered at $(4/3,0)$ by the projection onto $K$. It has an atom at $(1,0)$, an absolutely continuous part, uniform on $B\cap K$ and a one dimensional part corresponding to the points of $B$ which project onto the segments $[(0,1),(1,0)]$ and $[(0,-1), (1,0)]$.
\end{example}

This shows that, contrary to the case of distance like costs, one should expect that $\mu$ in general decomposes into a (nonzero) interior part and a boundary part:
\begin{equation}\label{decompmu}
\mu=\mui+\mub \mbox{ where } \mui(A):=\mu(A\cap \iK), \; \mub(A):=\mu(A \cap \partial K)
\end{equation}
for every Borel subset $A$ of $\R^d$. Regarding $\mub$, arguing as in Proposition \ref{dlikeinteg}, one can show that if $\mu_0$ is absolutely continuous, $\mu_1$ is discrete and $K$ is of class $C^{1,1}$, $\mub$ is absolutely continuous with respect to the $(d-1)$-Hausdorff measure on $\partial K$ (and has a bounded density if in addition $\mu_0 \in L^{\infty}$ and $\mu_1$ is finitely supported, see Proposition \ref{dlikeinteg2}). As for the regularity of $\mui$, we have:

\begin{proposition}
Assume $c_0$ and $c_1$ are of the form \eqref{sclike}, that $\mu_0$ and $\mu_1$ are compactly supported, with $\mu_0\in L^{\infty}$ and that $K$ is a compact convex subset of $\R^d$ with nonempty interior. Decomposing the solution $\mu$ of \eqref{eq:transportwcar} in the case of the location constraint $\cA:=\cP(K)$ as in \eqref{decompmu}, we have $\mui\in L^{\infty}$ and more precisely (identifying $\mui$ with its density), we have
\be\label{muiborne}
\|\mui\|_{L^\infty}\le\|\mu_0\|_{L^\infty} 2^d\lambda^{-d}\Lambda^d
\ee
where $\lambda$ and $\Lambda$ are the positive constants appearing in \eqref{sclike}.
\end{proposition}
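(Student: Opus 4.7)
The plan is to exploit the fact that, since $F_0$ is strictly convex (as $D^2F_0\ge\lambda\,\id$), the cost $c_0$ satisfies the twist condition; combined with $\mu_0\in L^\infty$ (so $\mu_0\ll\mathcal{L}^d$), the optimal $c_0$-plan between $\mu_0$ and $\mu$ is concentrated on the graph of a Borel map $S\colon\spt(\mu_0)\to K$, i.e.\ $\mu=S_{\#}\mu_0$ (cf.~Remark~\ref{rem:uniqu}). Consequently $\mui=S_{\#}(\mu_0\mathbf{1}_{S^{-1}(\iK)})$, and because the injectivity of $\nabla F_0$ readily yields injectivity of $S$ on $S^{-1}(\iK)$, the estimate \eqref{muiborne} will follow via the area formula from the pointwise Jacobian bound $|\det DS(x_0)|\ge \lambda^d/(2\Lambda)^d$ valid $\mu_0$-a.e.\ on $S^{-1}(\iK)$.

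To establish this Jacobian bound, I would use the dual potentials $\tfi_0,\tfi_1\in C(K)$ constructed in Subsection~\ref{subsec:loc-cont}: they are $c_0$- and $c_1$-concave respectively, and satisfy $\tfi_0+\tfi_1\ge 0$ on $K$ with equality on $\spt(\mu)$. Being an infimum of functions of the form $F_i(\,\cdot\,-x_i)-\mathrm{const}$, whose Hessians are bounded above by $\Lambda\,\id$, each $\tfi_i$ is $\Lambda$-semiconcave, hence by Alexandrov's theorem twice differentiable Lebesgue-a.e.\ with $D^2\tfi_i\le\Lambda\,\id$. At Lebesgue-a.e.\ $x\in\spt(\mu)\cap\iK$, the nonnegative function $\tfi_0+\tfi_1$ attains a local minimum at $x$, so the second-order condition gives $D^2\tfi_0(x)+D^2\tfi_1(x)\ge 0$; combined with $D^2\tfi_1\le\Lambda\,\id$ this yields the two-sided bound $-\Lambda\,\id\le D^2\tfi_0(x)\le\Lambda\,\id$.

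Finally, Kantorovich optimality asserts that $S(x_0)$ is a minimizer over $K$ of $y\mapsto F_0(y-x_0)-\tfi_0(y)$; when $S(x_0)\in\iK$, the interior first-order condition reads $\nabla F_0(S(x_0)-x_0)=\nabla\tfi_0(S(x_0))$ and the interior second-order condition reads $D^2F_0(S(x_0)-x_0)-D^2\tfi_0(S(x_0))\ge 0$. Implicit differentiation of the first-order equation yields
\[
DS(x_0)=\bigl(D^2F_0(S(x_0)-x_0)-D^2\tfi_0(S(x_0))\bigr)^{-1}D^2F_0(S(x_0)-x_0),
\]
and since the eigenvalues of $D^2F_0-D^2\tfi_0$ lie in $[0,2\Lambda]$ (using $D^2F_0\le\Lambda\,\id$ together with $-D^2\tfi_0\le\Lambda\,\id$) while $\det D^2F_0\ge\lambda^d$, we obtain $|\det DS|\ge\lambda^d/(2\Lambda)^d$, from which \eqref{muiborne} follows. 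The main obstacle is the low regularity of the potentials (merely Lipschitz and semi-concave, not $C^2$): one has to interpret the implicit differentiation and the second-order optimality argument at Alexandrov twice-differentiability points, and apply the area formula to the map $S$ (which inherits from $\nabla\tfi_0$ only a BV, approximately differentiable structure) rather than to a $C^1$ diffeomorphism. In practice this may be handled either by mollifying the $\tfi_i$ and passing to the limit, or by invoking the BV area formula directly.
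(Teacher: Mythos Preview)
Your formal computation is essentially the one the paper also relies on: from the duality one has $\tfi_0+\tfi_1\ge 0$ with equality on $\spt(\mu)$, semiconcavity gives $D^2\tfi_i\le\Lambda\,\id$, and the interior second-order minimum yields $D^2\tfi_0\ge-\Lambda\,\id$, whence the Jacobian of the transport is bounded below by $\lambda^d/(2\Lambda)^d$. The difference is entirely in how to make this rigorous, and here there is a genuine gap in your direct route.

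The circularity is the following. Your formula $DS(x_0)=(D^2F_0-D^2\tfi_0(S(x_0)))^{-1}D^2F_0$ and the lower bound $D^2\tfi_0(S(x_0))\ge-\Lambda\,\id$ both require the Alexandrov second derivative of $\tfi_0$ (and $\tfi_1$) \emph{at the image point} $S(x_0)$. Alexandrov's theorem only gives this $\mathcal{L}^d$-a.e., so to have it $\mu_0$-a.e.\ on $S^{-1}(\iK)$ you would need $S_\#\mu_0\ll\mathcal{L}^d$ on $\iK$ --- precisely the absolute continuity of $\mui$ you are trying to prove. Working instead with $D^2\tfi_0^{c_0}(x_0)$ (which \emph{is} available $\mu_0$-a.e.) does not help: the lower bound you need on it still has to be pulled back from the lower bound on $D^2\tfi_0$ at $S(x_0)$, and that step has the same circularity. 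Mollifying $\tfi_i$ destroys the Kantorovich relation tying $\tfi_0$ to the map $S$, so this fix does not go through either.

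The paper breaks the circle by penalization rather than by working on the constrained problem directly: it replaces the location constraint by a smooth convex penalty $g_\eps$ (essentially a mollified squared distance to $K$), so that the penalized minimizer $\mu_{g_\eps}$ is \emph{a priori} absolutely continuous by a result of Pass. With absolute continuity in hand, Cordero--Erausquin's Jacobian identity for the transport from $\mu_{g_\eps}$ to $\mu_0$ applies legitimately, and the optimality condition becomes an \emph{equality} $\tfi_0+\tfi_1+g_\eps=0$ on a full ball (no boundary), which gives the Hessian inequality $D^2\tfi_0+D^2\tfi_1+D^2g_\eps\ge 0$ at Alexandrov points without further ado. Since $D^2g_\eps=0$ on $\iK$, the bound $\mu_{g_\eps}\le\|\mu_0\|_{L^\infty}2^d\lambda^{-d}\Lambda^d$ holds uniformly on any $\Omega\Subset\iK$, and a routine $\Gamma$-convergence argument (the $J_\eps$ $\Gamma$-converge to $J$ and the minimizers are tight) lets one pass to the limit and recover \eqref{muiborne}. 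Your core inequality survives; what is missing in your sketch is exactly this detour that supplies absolute continuity before the Jacobian computation.
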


To establish the $L^\infty$ bound in \eqref{muiborne}, we shall use a penalization strategy, detailed in the next paragraph, the proof by a standard $\Gamma$-convergence argument is postponed to the end of this section.

\subsection{Penalization}

Given $g\in C^2(\R^d)$, with $g$ convex and nonnegative, let us consider
\begin{equation}\label{penalizedpbm}
\inf_{\mu \in \cP(\R^d)} T(\mu)+ \int_{\R^d} g \mu \mbox{ with } T(\mu):=W_{c_0}(\mu_0, \mu)+ W_{c_1}(\mu_1, \mu)
\end{equation}
then we have:

\begin{proposition}
Assuming \eqref{sclike} and $\mu_0\in L^\infty$, \eqref{penalizedpbm} admits a unique solution $\mu_g$. Moreover $\mu_g$ is absolutely continuous with respect to the Lebesgue measure and its density (still denoted $\mu_g$) satisfies for a.e. $x\in \R^d$, the bound
\be\label{boundmug}
\mu_g(x)\le\|\mu_0\|_{L^\infty}\lambda^{-d}\det(D^2 g(x)+2\Lambda\id)
\ee
where $\lambda$ and $\Lambda$ are the positive constants appearing in \eqref{sclike}.
\end{proposition}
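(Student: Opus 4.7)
The plan is to handle existence, uniqueness, and the pointwise density bound in turn. For existence, because $D^2 F_i \ge \lambda \id$ forces $c_i(x,y) \to \infty$ as $|y-x|\to\infty$ and $\mu_0,\mu_1$ are compactly supported, any minimizing sequence is tight and can be taken to live in a fixed compact set; the weak-$*$ lower semi-continuity of each of $W_{c_0}(\mu_0, \cdot)$, $W_{c_1}(\cdot, \mu_1)$ and $\mu \mapsto \int g\, d\mu$ (the latter being continuous on such a set) yields a minimizer by the direct method. Since $F_0$ is strictly convex, $c_0$ satisfies the twist condition in the $y$-variable, and $\mu_0\in L^\infty$ is absolutely continuous, so $\mu \mapsto W_{c_0}(\mu_0,\mu)$ is strictly convex (exactly as in Remark \ref{rem:uniqu}); together with convexity of $W_{c_1}(\cdot,\mu_1)$ and linearity of $\int g\,d\mu$, this gives uniqueness of $\mu_g$.

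For the pointwise bound I would first extract Kantorovich potentials. Let $\psi_0$ be a target-side potential for $W_{c_0}(\mu_0,\mu_g)$ and $\psi_1$ a source-side potential for $W_{c_1}(\mu_g,\mu_1)$. A standard first-variation argument (using that target/source-side Kantorovich potentials lie in the subdifferential of $\mu \mapsto W_{c_i}$ at $\mu_g$, together with convexity) produces a constant $\kappa$ such that
\be\label{kkt-pen}
\psi_0(x) + \psi_1(x) + g(x) \ge \kappa \quad \text{on } \R^d, \quad \text{with equality on } \spt(\mu_g).
\ee
Both $\psi_0$ and $\psi_1$ are expressed as infima of the $C^2$ families $x\mapsto F_0(x-x_0) - \psi_0^{c_0}(x_0)$ and $x\mapsto F_1(x_1-x) - \psi_1^{c_1}(x_1)$, whose Hessians in $x$ are bounded above by $\Lambda \id$; hence each $\psi_i$ is $\Lambda$-semi-concave and, by Alexandrov's theorem, admits pointwise second derivatives a.e., with $D^2 \psi_i \le \Lambda \id$. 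From \eqref{kkt-pen} and the fact that $\spt(\mu_g)$ consists of minimizers of $\psi_0+\psi_1+g$, I deduce $D^2\psi_0 + D^2\psi_1 + D^2 g \ge 0$ a.e. on $\spt(\mu_g)$, and combined with $D^2 \psi_1 \le \Lambda \id$ this gives
$$D^2 \psi_0 \ge -\Lambda \id - D^2 g \quad \text{a.e.\ on } \spt(\mu_g).$$

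Next I would translate this second-order information into a Jacobian bound via the optimal map. Because $F_0 \in C^2$ is strongly convex and $\mu_0 \in L^\infty$, Brenier-McCann provides a unique optimal transport map $T_0$ from $\mu_0$ to $\mu_g$, whose $\mu_g$-a.e.-defined inverse $S_0$ satisfies $\nabla \psi_0(x) = \nabla F_0(x - S_0(x))$, i.e.\ $S_0(x) = x - \nabla F_0^{*}(\nabla \psi_0(x))$. At Alexandrov points of $\psi_0$,
$$DS_0(x) = \id - \bigl(D^2 F_0(x-S_0(x))\bigr)^{-1} D^2 \psi_0(x) = \bigl(D^2 F_0\bigr)^{-1}\bigl(D^2 F_0 - D^2 \psi_0\bigr),$$
where the second factor is symmetric and positive semi-definite (the $c_0$-concavity of $\psi_0$ forces $D^2 F_0(x-S_0(x)) - D^2\psi_0(x) \ge 0$, since $F_0(\cdot - S_0(x)) - \psi_0^{c_0}(S_0(x))$ touches $\psi_0$ from above at $x$). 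Using $\lambda \id \le D^2 F_0 \le \Lambda \id$ and the bound on $D^2 \psi_0$, Loewner monotonicity of the determinant on the PSD cone gives
$$0 \le D^2 F_0 - D^2 \psi_0 \le 2\Lambda \id + D^2 g, \qquad |\det DS_0(x)| \le \frac{\det(2\Lambda \id + D^2 g(x))}{\lambda^d}.$$
The classical change of variables $\mu_g(x) = \mu_0(S_0(x))\, |\det DS_0(x)|$ (which simultaneously establishes the absolute continuity of $\mu_g$) then yields \eqref{boundmug}.

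The main obstacle is the rigorous justification of the formal differentiation: since $\psi_0$ is merely semi-concave, one must invoke Alexandrov's theorem to give meaning to $D^2 \psi_0$ and to $DS_0$ a.e., and check that the change-of-variables formula really applies to $S_0$ with the pointwise-a.e.\ Jacobian identified above (this is essentially the content of the Monge-Ampère equation for optimal maps with strongly convex cost; one may alternatively mollify $\mu_0$, $\mu_1$, $g$ to obtain $C^2$ Kantorovich potentials, prove \eqref{boundmug} in that setting, and pass to the limit using the uniqueness already established).
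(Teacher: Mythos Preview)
Your strategy is exactly the paper's: obtain potentials satisfying $\psi_0+\psi_1+g=\mathrm{const}$ on $\spt(\mu_g)$, use semi-concavity and the second-order minimality condition to bound $D^2\psi_0$ from below, and feed this into the Jacobian equation for the optimal map $S_0=x-\nabla F_0^*(\nabla\psi_0)$ from $\mu_g$ to $\mu_0$. The determinant estimate and the resulting inequality \eqref{boundmug} are carried out just as the paper does.

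The one genuine gap is the absolute continuity of $\mu_g$. Your sentence ``which simultaneously establishes the absolute continuity of $\mu_g$'' is circular: the very existence of $S_0$ as a $\mu_g$-a.e.\ defined map (let alone the Cordero-Erausquin Jacobian identity you are invoking) presupposes that $\mu_g$ gives no mass to the non-differentiability set of the semi-concave potential $\psi_0$, which is exactly what absolute continuity provides. Likewise, converting ``Lebesgue-a.e.\ on $\spt(\mu_g)$'' statements about Alexandrov second derivatives into ``$\mu_g$-a.e.'' statements already needs $\mu_g\ll\mathcal{L}^d$. Your proposed mollification fix does not close this: smoothing $\mu_0,\mu_1,g$ does \emph{not} by itself yield $C^2$ Kantorovich potentials for a general strongly convex cost $F_0(y-x)$ (that would require MTW-type regularity theory), so you are back to Alexandrov differentiability and hence back to needing absolute continuity of the mollified minimizer. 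The paper resolves this by invoking an independent result of Pass (Theorem~3.3 in \cite{Pass}) which directly gives absolute continuity of the minimizer in this multi-marginal/barycenter setting; once that is in hand, the rest of your argument goes through verbatim.
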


\begin{proof}
The coercivity of $c_0$, $c_1$ and $g\ge0$ easily give the existence of a minimizer as in Proposition \ref{prop:MultimargForm} (incorporating $g$ in one of the costs considered there), whereas uniqueness is guaranteed by twistedness of the costs and the absolute continuity of $\mu_0$, see Remark \ref{rem:uniqu}. Also Proposition \ref{prop:MultimargForm} ensures there is some ball $B$ which contains a neighbourhood of $\spt(\mu_g)$. Then, Theorem 3.3 from Pass \cite{Pass} guarantees that the minimizer $\mu_g$ is absolutely continuous. The optimality condition derived from the dual formulation of \eqref{penalizedpbm}, (see \eqref{eq:DualCtrans}) gives the existence of potentials $\varphi_0$ and $\varphi_1$ such that 
\begin{equation}\label{oc00}
\varphi_0 + \varphi_1 +g=0 \mbox{ on $B$}
\end{equation}
and
\[W_{c_0}(\mu_0, \mu_g)=\int_{\R^d} \varphi_0^{c_0} \mu_0 + \int_{\R^d} \varphi_0 \mu_g, \; W_{c_1}(\mu_g, \mu_1)=\int_{\R^d} \varphi_1^{c_1} \mu_1 + \int_{\R^d} \varphi_1 \mu_g\]
so that defining the $c_i$-concave potentials
\[\begin{split}
&\tfi_0(x):=\inf_{x_0\in\spt(\mu_0)}\{c_0(x_0,x)-\varphi_0^{c_0}(x_0)\},\\
&\tfi_1(x):=\inf_{x_1\in\spt(\mu_1)}\{c_1(x,x_1)-\varphi_1^{c_1}(x)\},
\end{split}\]
one should have
\be\label{coinct}
\varphi_i\le\tfi_i\mbox{ on $B$\qquad and\qquad}\varphi_i=\tfi_i\mbox{ on $\spt(\mu_g)$}.
\ee
Now observe that thanks to \eqref{sclike}, $\tfi_0$ and $\tfi_1$ are semi-concave and more precisely
\be\label{semiconc}
D^2\tfi_i\le\Lambda\id,\qquad i=0, 1.
\ee
In particular $\tfi_0$ and $\tfi_1$ are everywhere superdifferentiable, but on $\spt(\mu_g)$, thanks to \eqref{oc00} and \eqref{coinct}, $\tfi_0 + \tfi_1+g$ is minimal and since $g$ is differentiable this implies that $\tfi_0+ \tfi_1$ is also subdifferentiable on $\spt(\mu_g)$. This readily implies that $\tfi_0$ and $\tfi_1$ are differentiable on $\spt(\mu_g)$ and that
$$\nabla \tfi_0+ \nabla \tfi_1 + \nabla g =0 \mbox{ on $\spt(\mu_g)$}.$$
The functions $\tfi_0$ and $\tfi_1$ are semi-concave and, by Alexandrov's Theorem (see Theorem 6.9 in \cite{EG}), they are twice differentiable $\mu_g$-a.e.; the minimality of $\tfi_0+\tfi_1+g$ on $\spt(\mu_g)$ then gives
\be\label{Hesstfp}
D^2 \tfi_0 + D^2 \tfi_1 + D^2 g \ge0 \mbox{ $\mu_g$-a.e.}.
\ee
The optimal transport $S_0$ for the cost $c_0$ between $\mu_g$ and $\mu_0$ (see Theorem 3.7 in \cite{GMC}) is then given by
\[S_0(x)=x- \nabla F_0^*(\nabla \tfi_0(x)), \; x\in \spt(\mu_g),\]
where $F_0^*$ is the Legendre transform of $F_0$. The absolute continuity of $\mu_g$ enables us to use Theorem 4.8 of Cordero-Erausquin \cite{Cordero} to get the existence of a set of full measure for $\mu_g$ for which one has the Jacobian equation
\be\label{jacobian00}
\mu_g=\mu_0 \circ S_0\det(\id-D^2 F_0^*(\nabla \tfi_0) D^2 \tfi_0),
\ee
where $D^2 \tfi_0(x)$ is to be understood in the sense of Alexandrov and the matrix $\id-D^2 F_0^*(\nabla \tfi_0) D^2 \tfi_0$ which is diagonalizable with real and nonnegative eigenvalues can be rewritten as 
\[\id-D^2 F_0^*(\nabla \tfi_0) D^2 \tfi_0 = D^2 F_0^*(\nabla \tfi_0) (D^2 F_0(x-S_0(x)) -D^2 \tfi_0(x)).\]
Together with \eqref{jacobian00}, since $D^2 F_0^* \leq \lambda^{-1}\id$ and $D^2 F_0(x-S_0(x)) -D^2 \tfi_0(x)$ is semidefinite positive, this gives for $\mu_g$ a.e. $x$:
\[\mu_g(x)\le\|\mu_0\|_{L^\infty}\lambda^{-d} \det(D^2 F_0(x-S_0(x))-D^2\tfi_0(x))\]
by \eqref{Hesstfp} and \eqref{semiconc}, we then have
\[-D^2\tfi_0(x)\le D^2g(x)+D^2\tfi_1(x)\le D^2g(x)+\Lambda\id\]
but since $D^2 F_0\le\Lambda\id$, the bound \eqref{boundmug} follows.
\end{proof}

\subsection{Proof of the bound by $\Gamma$-convergence}

Recall that we have assumed that $K$ is a convex compact subset with nonempty interior, for $\eps>0$, setting $K_\eps:=K+\eps B$ (where $B$ is the unit Euclidean ball of $\R^d$); consider the mollifiers $\eta_\eps=\eps^{-d}\eta(\frac{\cdot}{\eps})$ with $\eta$ a smooth probability density supported on $B$, consider the smooth and convex function
\[g_\eps:= \eta_\eps \star \eps^{-1} d_{K_\eps}^2\]
where $d_{K_\eps}$ is the distance to $K_\eps$. Defining $T$ as in \eqref{penalizedpbm} and for every $\nu \in \cP(\R^d)$:
\[J_\eps(\nu):=T(\nu)+ \int_{\R^d} g_\eps \nu, \; J(\nu):=\begin{cases} T(\nu) \mbox{ if $\nu \in \cP(K)$} \\ + \infty \mbox{ otherwise} \end{cases}\]
it is easy to see that $J_\eps$ $\Gamma$-converges to $J$ as $\eps \to 0^+$ for the narrow topology. Hence the tight sequence of minimizers of $J_\eps$, $\mu_\eps:=\mu_{g_\eps}$ converges narrowly to $\mu$ the minimizer of $J$ i.e. the solution of \eqref{eq:transportwcar} with $\cA=\cP(K)$. Since $D^2 g_\eps=0$ on $\iK$, we deduce from \eqref{boundmug} that for every open $\Omega$ such that $\Omega \Subset \iK$
\[\|\mu_\eps\|_{L^\infty(\Omega)}\le\|\mu_0\|_{L^\infty} 2^d\lambda^{-d}\Lambda^d\]
from which one deduces \eqref{muiborne} by letting $\eps\to0^+$.

\section{A parking location model}\label{sec:parking}

In this section, we introduce a mathematical model for the optimal location of a parking area in a city. We fix:
\begin{itemize}
\item a compactly supported probability measure on $\R^d$, $\nu_0$ which represents the distribution of residents in a given area; 
\item a a compactly supported probability measure on $\R^d$, $\nu_1$ which represents the distribution of services.
\end{itemize}
The goal is to determine a measure $\mu$ which represents the density of parking places, in order to minimize a suitable total transportation cost. All the residents travel to reach the services, but some of them may simply walk (which will cost $c_1(x,y)$ to go from $x$ to $y$), while some other ones may use their car to reach a parking place (which will cost $c_0(x,y)$ to go from $x$ to the parking place $y$) and then walk from the parking place to the services (which will cost $c_1(y,z)$ to go from $y$ to $z$). We consider two cost functions $c_0$ and $c_1$ and the corresponding Monge-Kantorovich functionals $W_{c_0}$ and $W_{c_1}$, defined as in \eqref{defwasser}, respectively representing the cost of moving by car and the cost of walking. It may be natural to assume that walking is more costly than driving i.e. $c_1\ge c_0$, for instance we may take $p>0$ and
\be\label{parkc}
c_0(x,y)=|x-y|^p,\qquad c_1(x,y)=\lambda|x-y|^p\hbox{ with }\lambda\ge1.
\ee
Assuming that $\mu_0\le\nu_0$ denotes the distribution of driving residents and $\mu_1\le\nu_1$ the corresponding services they reach for, the total cost we consider is
\be\label{parkF}
F(\mu_0,\mu_1,\mu)=W_{c_1}(\nu_0-\mu_0,\nu_1-\mu_1)+W_{c_0}(\mu_0,\mu)+W_{c_1}(\mu,\mu_1).
\ee
The optimization problems we consider are then the minimization of $F(\mu_0,\mu_1,\mu)$, subject to the constraints
\[0\le\mu_0\le\nu_0,\quad 0\le\mu_1\le\nu_1,\quad \int d\mu_0=\int d\mu_1=\int d\mu,\]
and additional constraints as:
\begin{itemize}
\item no other constraints on the parking density $\mu$;
\item location constraints, that is $\spt\mu\subset K$, with a compact set $K\subset\R^d$ a priori given;
\item density constraints, that is $\mu\le\phi$, for a given nonnegative and integrable function $\phi$.
\end{itemize}
This optimization problem in the case of a location constraint can also be reformulated as a linear program in the following way
\be\label{eq:displacementparking}
\inf \int_{\R^d\times\R^d} c_1(x_0,x_1)\d \gamma(x_0,x_1) + \int_{\R^d \times \R^d \times K} (c_0(x_0,x)+c_1(x,x_1)) \d \tilde\gamma(x_0,x,x_1)
\ee
subject to the constraints:
\[ \gamma, \tilde \gamma\ge0, \; \gamma+{\pi_{0,1}}_\#\tilde\gamma\in\Pi(\nu_0,\nu_1).\]
It is indeed easy to see that the optimal solution to minimizing the functional in \eqref{parkF} is given by ${\pi_{\piv}}_\# \tilde \gamma$. Hence to incorporate a density constraint in the formulation \eqref{eq:displacementparking} one needs to add the constraint ${\pi_{\piv}}_\# \tilde \gamma \leq \phi.$
The problem with location constraint is actually equivalent to a standard optimal transport problem with cost function
\begin{equation*}
C(x_0,x_1) \eqset \min\left\{c_1(x_0, x_1), \inf_{x \in K}\{c_0(x_0,x) + c_1(x,x_1)\} \right\}.
\end{equation*}
More precisely, consider 
\begin{equation}\label{eq:parkingOneCost}
\inf_{\beta \in \Pi(\nu_0, \nu_1)}\int_{\R^d \times \R^d} C(x_0,x_1) \d \beta (x_0,x_1).
\end{equation}
Then both \eqref{eq:displacementparking} and \eqref{eq:parkingOneCost} admit solutions and they are equivalent in the following sense
\begin{itemize}
\item $\min \eqref{eq:displacementparking} = \min \eqref{eq:parkingOneCost},$
\item if $\gamma, \tilde\gamma$ are optimal for \eqref{eq:displacementparking}, then $\beta \eqset \gamma+{\pi_{0,1}}_\#\tilde\gamma$ is optimal for $\eqref{eq:parkingOneCost},$
\item if $\beta$ is optimal for \eqref{eq:parkingOneCost}, then defining
$$V_1 \eqset \left\{ (x_0,x_1) \in \R^d \times \R^d:~ c_1(x_0,x_1) = C(x_0,x_1)\right\}$$
$\gamma \eqset \beta_{|V_1}$ and $P:\R^d \times \R^d \rightarrow \R^d$ (measurable)
$$P(x_0,x_1) \in \argmin_{x \in K} \{c_0(x_0,x) + c_1(x,x_1)\},$$
and
$$\d\tilde\gamma(x_0,x_1,x) \eqset \delta_{P(x_0,x_1)}(x) \otimes \d\beta_{|\R^d \setminus V_1}(x_0,x_1),$$
then $\gamma$ and $\tilde \gamma$ are optimal for \eqref{eq:displacementparking}.
\end{itemize}

\begin{remark} 
Note that the solutions $(\mu_0,\mu,\mu_1)$ to minimizing \eqref{parkF}, (respectively the solutions $\gamma$ and $\tilde \gamma$ to \eqref{eq:displacementparking}) are not necessarily probability measures. The optimal common total mass of $\tilde\gamma$, $\mu_0$ and $\mu_1$ represents the fraction of $\nu_0$ which uses the parking. Thus, the parking problem is a generalization of the interpolation problem from Section \ref{sec:wasser}, which corresponds to imposing that the parking measure is of full mass.
\end{remark}

However, under quite general and natural assumptions, it can be shown that the optimal parking measure is non-trivial

\begin{lemma}\label{lemmunontrivial}
Assume that $c_0, ~ c_1 \geq 0$ are continuous with $c_0(x,x)=c_1(x,x)=0$ and $c_1(x,y)>c_0(x,y)$ for $x\neq y \in \R^d$. Consider the density constraint case $\mu \leq \phi$ with $\phi \in L^1(\R^d)$, $0<\phi<+\infty$ almost everywhere. Then, if $\nu_0 \neq \nu_1$ the optimal $\mu$ for the parking problem is non-trivial, i.e. $\mu \neq 0.$
\end{lemma}
\begin{proof}
Assume by contradiction that $\mu=0$ is an optimal solution. The optimal cost for the parking problem is then given by
\begin{equation}
W_{c_1}(\nu_0,\nu_1).
\end{equation}
Let $\gamma_1$ be an associated optimal transport plan. Since $\nu_0 \neq \nu_1$, there is $(x_0,x_1) \in \spt \gamma_1$ with $x_0 \neq x_1$. Clearly, there exists $x \in \R^d$ (take for instance $x=x_1$) such that
\begin{equation}
c_0(x_0,x)+ c_1(x,x_1) < c_1(x_0,x_1).
\end{equation}
Then by continuity of the cost functions there exists an open neighborhood of the form $A_0 \times A \times A_1$ of $(x_0,x,x_1)$ such that the inequality remains valid, i.e.
\begin{equation}
c_0(y_0,y)+ c_1(y,y_1) < c_1(y_0,y_1),
\end{equation}
for all $(y_0,y,y_1) \in A_0 \times A \times A_1$.
 By possibly choosing a smaller (open) $A$ we can assume that
$$
\int_{A} \phi(x) \d x \leq \gamma_1(A_0 \times A_1),
$$
so that there is $t \in (0,1]$ such that
$$
\int_{A} \phi(x) \d x = t \gamma_1(A_0 \times A_1).
$$
But then
\begin{align}
 W_{c_1}(\nu_0,\nu_1) &= \int c_1(y_0,y_1) \d \gamma_1(y_0,y_1) \\
&> \int_{(A_0 \times A_1)^c} c_1(y_0,y_1) \d \gamma_1(y_0,y_1) + (1-t)\int_{A_0 \times A_1} c_1(y_0,y_1) \d \gamma_1(y_0,y_1)\\
& \quad + (\gamma_1(A_0 \times A_1))^{-1} \int_{A_0 \times A_1}\int_A (c_0(y_0,y) +c_1(y,y_1))\phi(y)\d y \d \gamma_1(y_0,y_1) \\
&\geq W_{c_1}(\nu_0 - \mu_0, \nu_1 - \mu_1) + W_{c_0}(\mu_0,\mu') + W_{c_1}(\mu',\mu_1)
\end{align}
where $\mu'=\phi_{|A}$, and $\mu_0$, $\mu_1$ are the marginals of $t {\gamma_1}_{| A_0 \times A_1}$. This gives a contradiction and achieves the proof.
\end{proof}

\subsection{Examples}

We first solve a simple particular example in $\R^2$ before giving some numerical simulations. This example shows that in some cases the optimal choices for $\mu_0,\mu_1,\mu$ are not of unitary mass, that corresponds to the cases where it is more efficient for some residents to walk from their residence to the services without using their car.
\begin{example}\label{parkingexample}
Let $\nu_0=\delta_{x_0}$ and $\nu_1=\delta_{0}$ be two Dirac masses in $\R^2$; with $x_0\neq 0$. We consider the costs $c_0$ and $c_1$ as in \eqref{parkc} with $p> 0$ and $\lambda> 1$. Then $\mu_0=\alpha\delta_{x_0}$ and $\mu_1=\alpha\delta_0$, for some $\alpha\in [0,1]$, and the optimization problems for the functional $F$ in \eqref{parkF} become the minimization of the quantity
\[\begin{split}
\lambda(1-\alpha)|x_0|^p&+\int\big(|x-x_0|^p+\lambda|x|^p\big)\,d\mu\\
&=\lambda|x_0|^p+\int\big(|x-x_0|^p+\lambda|x|^p-\lambda|x_0|^p\big)\,d\mu.
\end{split}\]
Since $\lambda|x_0|^p$ is fixed we are reduced to minimize the quantity
$$\int\big(|x-x_0|^p+\lambda|x|^p-\lambda|x_0|^p\big)\,d\mu$$
with the constraint $\int d\mu\le1$ and possibly other location and density constraints on $\mu$, as illustrated above. Setting
\be\label{parkf}
f(x)=|x-x_0|^p+\lambda|x|^p-\lambda|x_0|^p
\ee
it is clear that $\mu$ has to be concentrated on the set where $f\le0$. The optimization problem with no other constraints on $\mu$ has then the trivial solution $\alpha=1$ and $\mu=\delta_{\argmin f}$ (for instance $\mu=\delta_0$ if $p=1$ and $\mu=\delta_{(1+\lambda)^{-1} x_0}$ if $p=2$). 
The situation becomes more interesting when other constraints on $\mu$ are present. If we impose $\spt\mu\subset K$, let $\bar x\in K$ be a minimum point of the function $f$ in \eqref{parkf} over $K$. If $f(\bar x)<0$ then $\alpha=1$ and $\mu=\delta_{\bar x}$ is a solution; if $f(\bar x)\ge0$ then $\alpha=0$ and $\mu=0$ is a solution.

We consider now the more realistic case when a density constraint on $\mu$ is imposed, we take $\mu\le1$. The optimal measure $\mu$ for the cost \eqref{parkF} is then the characteristic function $1_{A_c}$ of a suitable level set $A_c=\{f\le c\}$ with $c\le0$. Thus the following situations may occur.
\begin{itemize}
\item If $|\{f\le0\}|\ge1$ then $\alpha=1$ and $\mu=1_{A_c}$, where the level $c\le0$ is such that $|A_c|=1$. Note that, since the function $f$ is convex, the set $A_c$ is convex too. This happens when $x_0$ is far enough from the origin, and all people then drive to the parking area $A_c$.
\item If $|\{f\le0\}|<1$ then $\alpha=|\{f\le0\}|$ and $\mu=1_{A_0}$.
\end{itemize}

For instance, when $p=2$ it is easy to see that the set $A_0$ is the ball centered at $x_0/(1+\lambda)$ with radius $\lambda|x_0|/(1+\lambda)$. Therefore:
\begin{itemize}
\item if $|x_0|\ge\pi^{-1/2}(\lambda+1)/\lambda$ we have $\alpha=1$ and $\mu_{opt}=1_A$, where $A$ is the disk centered at $x_0/(1+\lambda)$ of unitary area;
\item if $|x_0|<\pi^{-1/2}(\lambda+1)/\lambda$ we have $\alpha=\pi|x_0|^2\lambda^2/(\lambda+1)^2$ and $\mu_{opt}=1_{A_0}$. In this case only the fraction $\alpha$ of people drive to reach a parking, while the rest of residents walk up to the services.
\end{itemize}
In Figure \ref{fig1} the two situations are graphically represented in the cases $p=\lambda=2$, while in Figure \ref{fig2} we plot the two optimal parking areas when $p=1$ and $\lambda=2$.

\begin{figure}[h!]
\centering
{\includegraphics[scale=0.7]{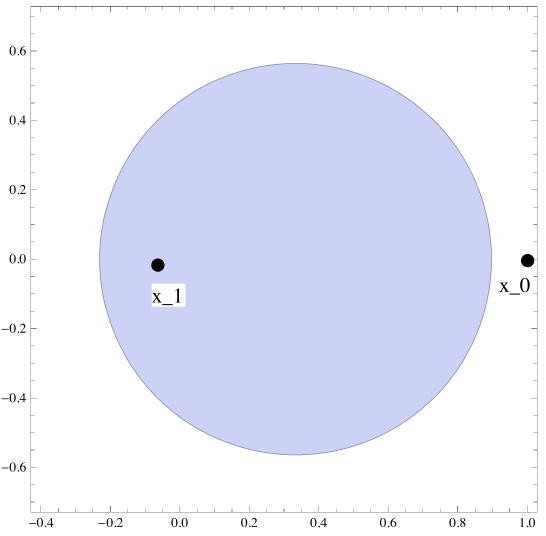}
\includegraphics[scale=0.7]{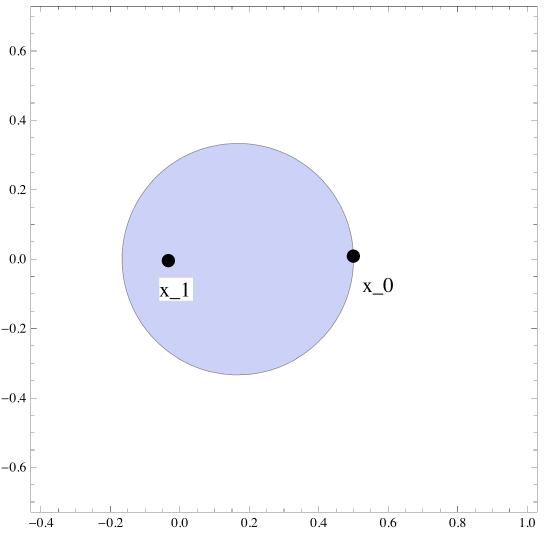}}
\caption{The case $p=\lambda=2$. On the left $|x_0|=1$ gives $|A_{opt}|=1$; on the right $|x_0|=1/2$ gives $|A_{opt}|\simeq0.35$.}\label{fig1}
\end{figure}

\begin{figure}[h!]
\centering
{\includegraphics[scale=0.7]{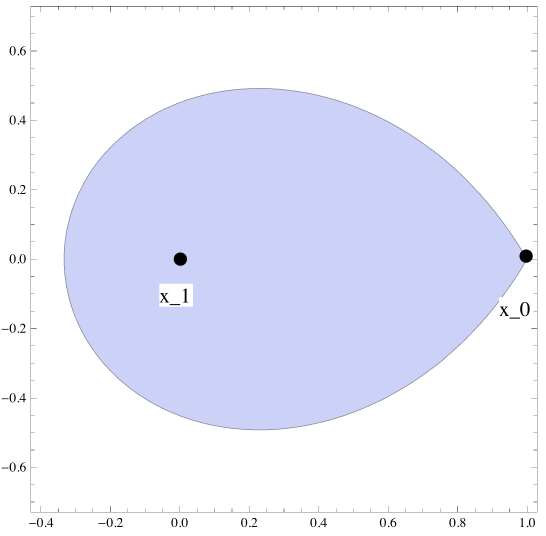}
\includegraphics[scale=0.7]{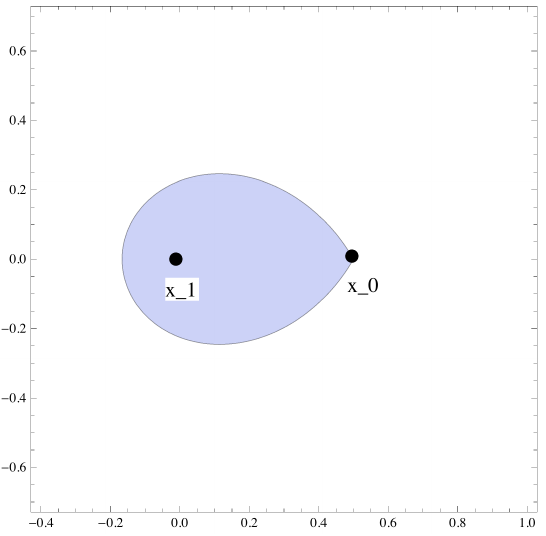}}
\caption{The case $p=1$ and $\lambda=2$. On the left $|x_0|=1$ gives $|A_{opt}|\simeq0.97$; on the right $|x_0|=1/2$ gives $|A_{opt}|\simeq0.24$.}\label{fig2}
\end{figure}
\end{example}

\section{Numerical simulations}\label{sec:numeric}
For the numerical simulation of examples in the case of interpolation between measures \eqref{eq:transportwcar} and the parking problem \eqref{parkF} we replace the optimal transportation costs by their entropically regularized versions. This will enable us to apply some variants of the celebrated Sinkhorn's algorithm, popularized in the context of optimal transport and matching by \cite{cuturi_sinkhorn_2013} and \cite{galichon_matching_2010}. For an introduction to this rapidly developing subject and convergence results, we refer the reader to \cite{peyre_computational_2019} and \cite{nutz_introduction_nodate}.

\subsection{Description of the Sinkhorn-like algorithm}
The entropically regularized optimal transport cost for a cost function $c$, a regularizing parameter $\eps >0$ and a fixed reference measure $Q \in \cP(\R^d \times \R^d)$ is given by
\be\label{entregc}
\inf\left\{\int_{\R^d\times\R^d} c(x,y)\,\d\gamma(x,y)+\eps H(\gamma|Q)\; :\ \gamma\in\Pi(\mu_0,\mu_1)\right\},
\ee
where the relative entropy $H(P|Q)$ between two nonnegative finite measures $P, Q$ on $\R^d$ is defined by
$$H(P|Q)\eqset\begin{cases}
\int_{\R^d} \left(\log\left(\frac{\d P}{\d Q}\right) -1\right) \d P \mbox{ if $P\ll Q$,}\\
+\infty\mbox{ otherwise.}
\end{cases}$$
Note that, by setting $R =e^{-c/\eps}Q$, we have
$$\eps H(\gamma | R) = \int_{\R^d\times\R^d} c(x,y)\,\d\gamma(x,y)\ + \eps H(\gamma|Q)$$
so that \eqref{entregc} amounts to minimizing $H(.|R)$ among transport plans between $\mu_0$ and $\mu_1$. As already observed in \cite{benamou_iterative_2015} in Section 3.2 the entropically regularized version of \eqref{eq:transportwcar} becomes for two suitably chosen reference measures $R_0$, $R_1$
\be\label{eq:EntregInt}
\inf\Big\{ H(\gamma_0|R_0)+ H(\gamma_1|R_1)\ :\ \mu\in\cA,\ \gamma_0\in\Pi(\mu_0,\mu),\ \gamma_1\in\Pi(\mu,\mu_1)\Big\}.
\ee
The cases \ref{case:Noconstraints} with no additional constraint and \ref{case:Location} with location constraint $K$ can be treated by choosing the reference measures to enforce the support of $\mu$ being included in $K$. Namely we choose 
\be\label{eq:choiceRefmeasint}
R_0 =e^{-c_0/\eps}\mu_0\otimes\mathds{1}_{K},\quad R_1 =e^{-c_1/\eps}\mathds{1}_{K}\otimes\mu_1,
\ee
where for case \ref{case:Noconstraints} we choose $K$ large enough (yet still compact) as before. The resulting Sinkhorn iterations are standard, see for instance Proposition 1 and 2 in \cite{benamou_iterative_2015}.
The case \ref{case:Densityconstr} of a density constraint $\phi$ requires performing a suitable projection of the estimated interpolation, as specified in Proposition 4.1 in \cite{peyre_entropic_2015} in the case of $\phi \equiv \kappa.$ We write the corresponding Sinkhorn iterations including the projection for the density constraint for sake of completeness in its dual form where the algorithm essentially becomes alternate gradient ascent. For this, note that the dual of \eqref{eq:EntregInt} with $R_i$ as in \eqref{eq:choiceRefmeasint} in the case of a density constraint $\mu \leq \phi$ ($\spt \phi \subset K$) is given by
\[
\sup_{\substack{\varphi_0,\varphi_1,\\ \psi_0,\psi_1}} \left\{ - \sum_{i=0}^1 \int_{\R^{2d}} \exp\left(\varphi_i + \psi_i \right)\d R_i+ \sum_{i=0}^1 \int_{\R^d} \varphi_i \d \mu_i + \int_{\R^d} \left( \psi_0 + \psi_1\right) \phi \d x : \psi_0 + \psi_1 \leq 0\right\}.
\]
Sinkhorn iterations are given by the explicit coordinate ascent updates for this dual formulation:
\[\begin{split}
&\exp\left(\varphi_i^{l+1}(x_i)\right)=\left( \int_{K} \exp\left(-\frac{c_i}{\eps} + \psi_i^l(x) \right) \d x \right)^{-1},\\
&\exp\left(\psi_i^{l+1}(x)\right)=\min\left\{\mu^l,\phi\right\} \left( \int_{\R^d} \exp\left(-\frac{c_i}{\eps} + \varphi_{i}^{l+1}(x_i) \right) \d \mu_i (x_i) \right)^{-1},
\end{split}\]
where $\mu^l$ is the current approximate interpolation which is given by the geometric mean formula (see Proposition 2 of \cite{benamou_iterative_2015}):
$$\mu^l = \prod_{j=0}^1 \left(\int \exp\left(-\frac{c_j}{\eps} + \varphi_j^{l+1} + \psi_j^l \right) \d \mu_j (x_j)\right)^{\frac{1}{2}}.$$
Regularizing the parking problem \eqref{parkF} in a similar way leads to
\be\label{eq:parkingentrreg}
\inf_{\gamma,\tilde{\gamma_0},\tilde{\gamma_1},\tilde{\gamma}\in\cM} \left\{H\left(\gamma|R\right)+H\left(\tilde{\gamma_0}|R_0\right)+H\left(\tilde{\gamma_1}|R_1\right) \right\},
\ee
where
\[\begin{split}
\cM&=\Big\{\gamma,\tilde{\gamma_0},\tilde{\gamma_1},\tilde{\gamma}\in\cM_+(\R^{2d})^3 \times \cM_+(\R^{3d}):\\
&\qquad\gamma + {\pi_{0,1}}_{\#}\tilde{\gamma} \in \Pi(\nu_0, \nu_1), {\pi_{0,\mathrm{piv}}}_\# \tilde{\gamma}=\tilde{\gamma_0},{\pi_{\mathrm{piv},1}}_\# \tilde{\gamma} = \tilde{\gamma_1}\Big\}.
\end{split}\]
As before, a location constraint on a given set $K$ can be encoded in the choice of the reference measures:
$$R=e^{-c_1/\eps}\nu_0\otimes\nu_1,\quad R_0=e^{-c_0/\eps}\nu_0\otimes\mathds{1}_K,\quad R_1=e^{-c_1/\eps}\mathds{1}_K \otimes \nu_1.$$
For the density constraint, we have to add the condition ${\pi_1}_\# \tilde{\gamma_0} \leq \phi$. The dual of \eqref{eq:parkingentrreg} in the case of a density constraint is then given by 
\begin{eqnarray*}
\sup_{\substack{\varphi_0,\varphi_1,\\ \psi_0, \psi_1}} \left\{
-\int_{\R^{2d}} \exp\left(\varphi_0 + \varphi_1 \right)\d R
-\sum_{i=0}^1 \int_{\R^{2d}} \exp\left(\varphi_i + \psi_i \right)\d R_i\right.\\
\left. +\sum_{i=0}^1 \int_{\R^d} \varphi_i \d \nu_i+\int_{\R^d} \left( \psi_0 + \psi_1\right) \phi \d x\ :\ \psi_0 + \psi_1\le0\right\}.
\end{eqnarray*}
The Sinkhorn iterations (density constraint included) in the dual variables then become
\[\begin{split}
\exp\left(\varphi_i^{l+1}(x_i)\right) & =\left( \int \exp\left(-\frac{c_1}{\eps} + \varphi_{i + 1 \mod 2}^l(x) \right) \d x + \int \exp\left(-\frac{c_i}{\eps} + \psi_i^l(x) \right) \d x \right)^{-1},\\
\exp\left(\psi_i^{l+1}(x)\right) & =\min\left\{\mu^l,\phi\right\} \left( \int \exp\left(-\frac{c_i}{\eps} + \varphi_{i}^{l+1}(x_i) \right) \d \mu_i (x_i) \right)^{-1},
\end{split}\]
and $\mu^l$, the current approximate parking measure, is again given by an explicit geometric mean expression.

\bigskip

\subsection{Numerical results: comparison of the optimal interpolation and the optimal parking} We now present some numerical results based on the iterative schemes described in the previous paragraph. In all our examples (presented in Figures 4 to 7), we compare the solutions of the interpolation and parking problems with a constant density constraint on the unit square $K=[0,1]^2$. We always take as distribution of services $\mu_1=\nu_1=\delta_{(0.5, 0.5)}$, the Dirac at the center of the square and as distribution of residents, we take a symmetric sum of four Dirac masses:
\[\mu_0=\nu_0=\frac14\Big(\delta_{(0.5, 0.1)}+\delta_{(0.5,0.9)}+\delta_{(0.1,0.5)}+\delta_{(0.9,0.5)}\Big)\]
We consider power-like costs
\[c_0(x,y)=|x-y|^p,\qquad c_1(x,y)=2\,c_0(x,y)\]
for several values of $p$ corresponding to concave, linear or convex costs and various constant threshold values for the density constraints $\phi$. In this setting, we know (Corollary \ref{corodiscret} for $p>1$ and Proposition \ref{bangbangdistlike} for $p\leq 1$) that the optimal interpolation and the optimal parking are of bang-bang type. Even with the entropic regularization (which has the effect of blurring the true solution) this is clearly what we observe in these figures with a small regularization $\eps=5.10^{-4}$. Since the optimal parking may have total mass less than $1$, we have indicated its total mass on each figure, of course if the total mass of the parking is $1$ it coincides with the interpolation, a case which is more likely to occur when the threshold level is high. Finally, one can see the influence of the exponent $p$ on the shape of the support of the optimal measure and in particular recognize for $p=1$ (Figure 6) the drop-like shape which was explicitly computed and plotted in Figure 3 and balls for $p=2$ (Figure 7). 

\begin{figure}[h!]
	\centering
	\begin{subfigure}{\textwidth}
	\includegraphics[scale=0.5]{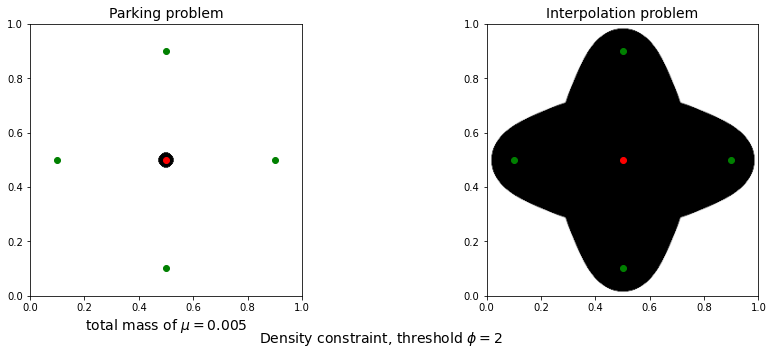}
	\end{subfigure}
	\begin{subfigure}{\textwidth}
	\includegraphics[scale=0.5]{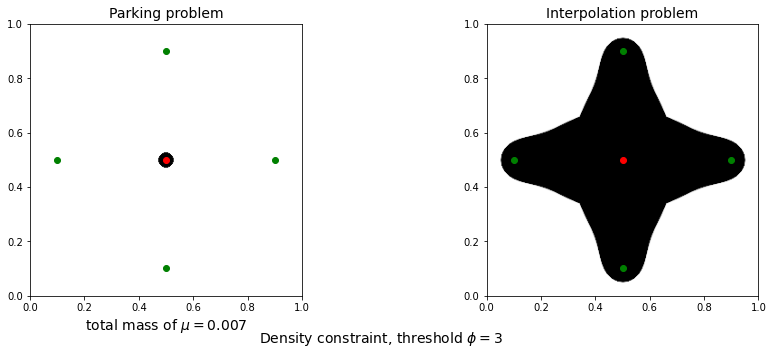}
	\end{subfigure}
	\begin{subfigure}{\textwidth}
	\includegraphics[scale=0.5]{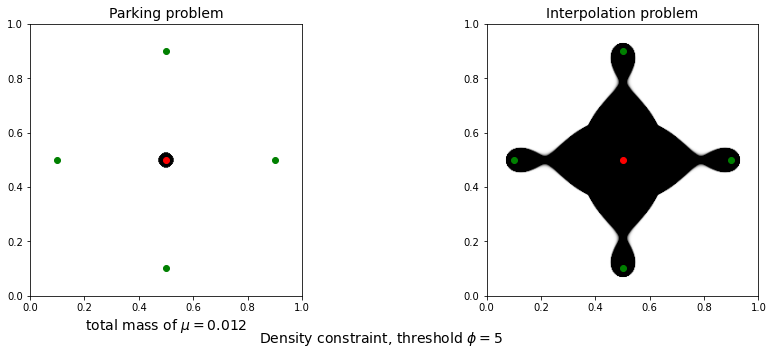}
	\end{subfigure}
	\begin{subfigure}{\textwidth}
	\includegraphics[scale=0.5]{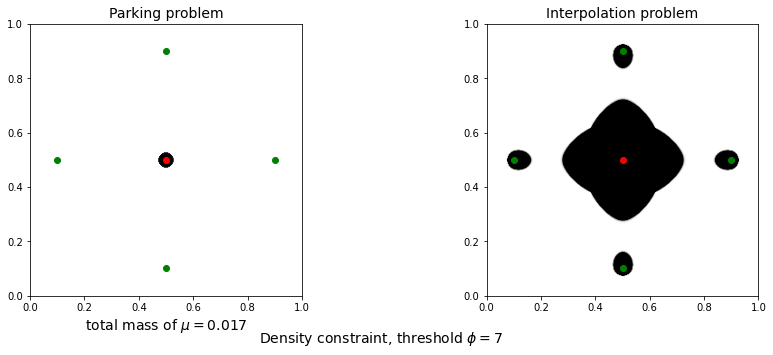}
	\end{subfigure}
	\caption{concave cost $p=0.25$}
	\end{figure}
	
\begin{figure}[h!]
	\centering
	\begin{subfigure}{\textwidth}
	\includegraphics[scale=0.5]{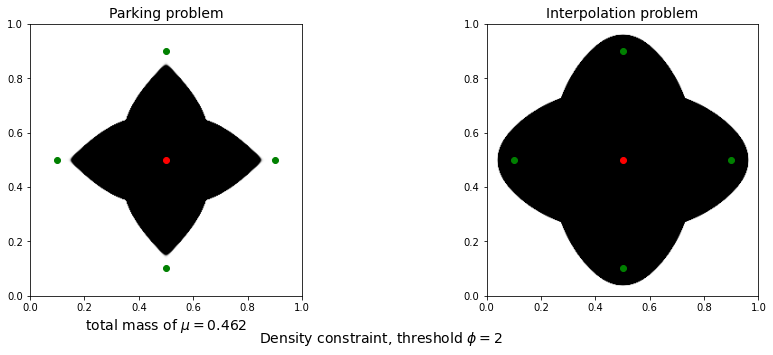}
	\end{subfigure}
	\begin{subfigure}{\textwidth}
	\includegraphics[scale=0.5]{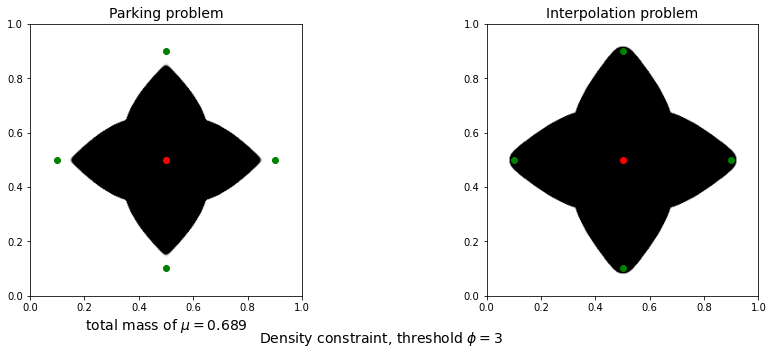}
	\end{subfigure}
	\begin{subfigure}{\textwidth}
	\includegraphics[scale=0.5]{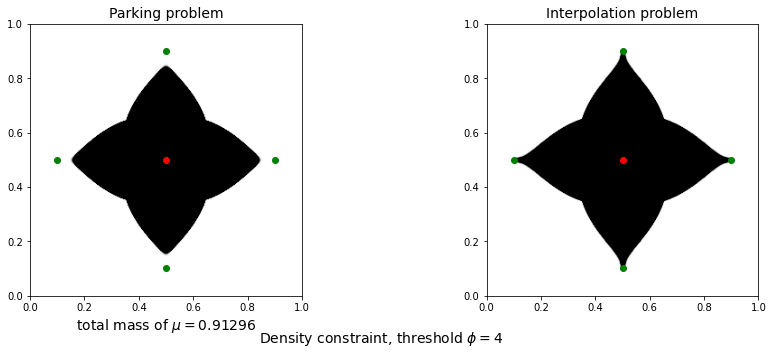}
	\end{subfigure}
	\begin{subfigure}{\textwidth}
	\includegraphics[scale=0.5]{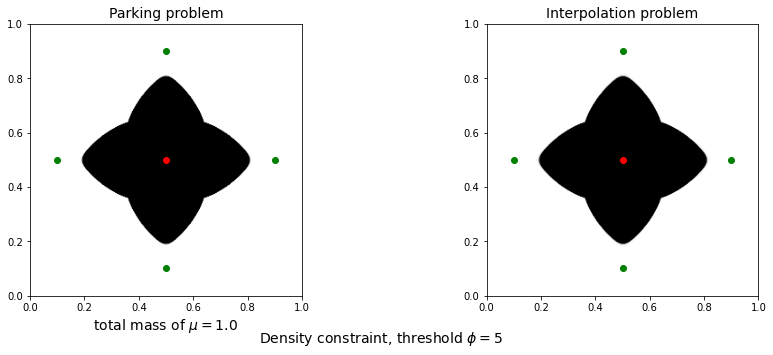}
	\end{subfigure}
	\caption{concave cost $p=0.75$}
\end{figure}

\begin{figure}[h!]
	\centering
	\begin{subfigure}{\textwidth}
	\includegraphics[scale=0.5]{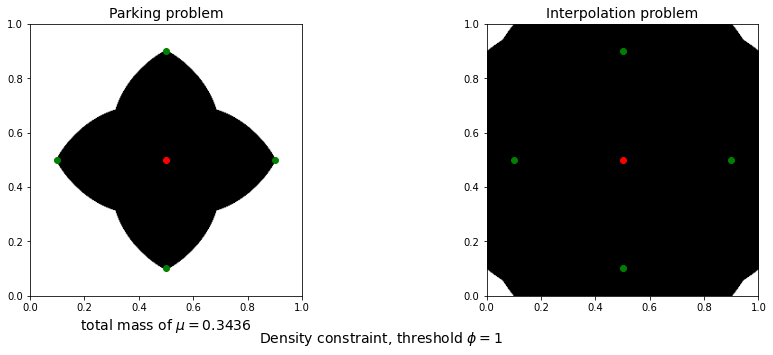}
	\end{subfigure}
	\begin{subfigure}{\textwidth}
	\includegraphics[scale=0.5]{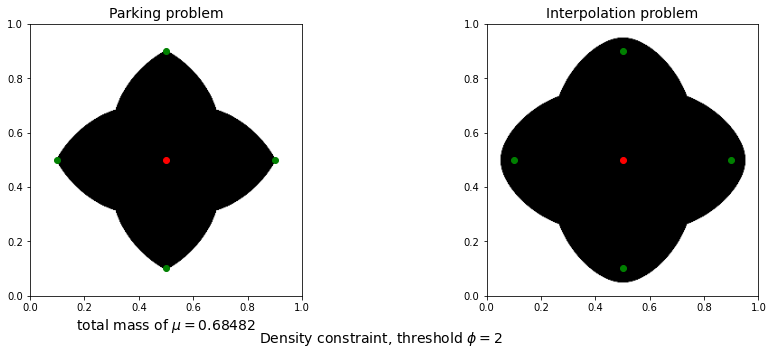}
	\end{subfigure}
	\begin{subfigure}{\textwidth}
	\includegraphics[scale=0.5]{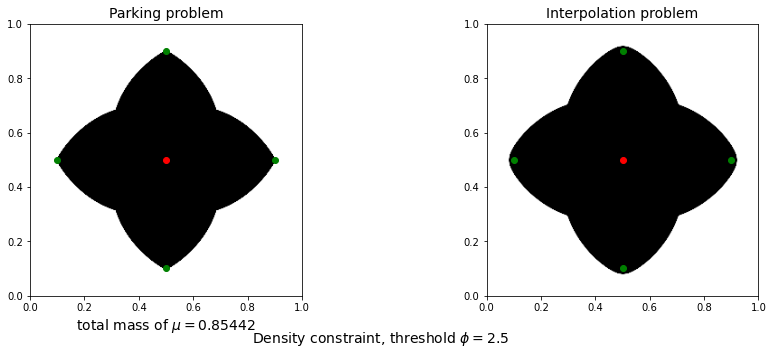}
	\end{subfigure}
	\begin{subfigure}{\textwidth}
	\includegraphics[scale=0.5]{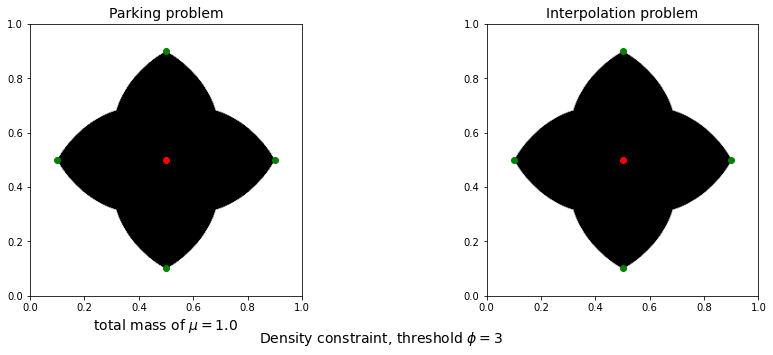}
	\end{subfigure}
	\caption{Linear cost $p=1$}
\end{figure}	

\begin{figure}[h!]
	\centering
	\begin{subfigure}{\textwidth}
	\includegraphics[scale=0.5]{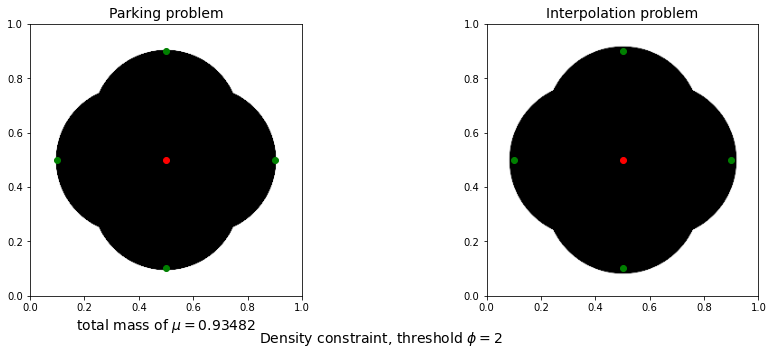}
	\end{subfigure}
	\begin{subfigure}{\textwidth}
	\includegraphics[scale=0.5]{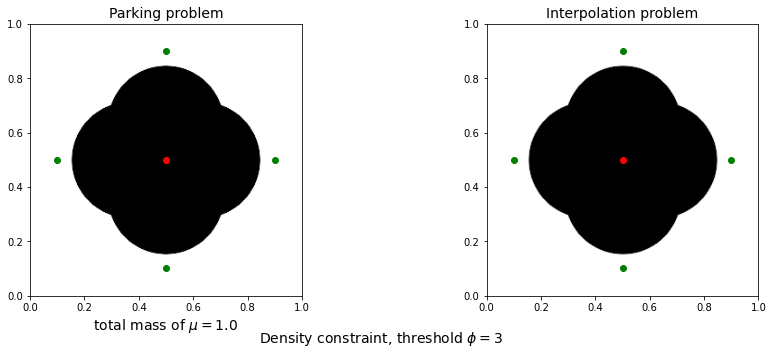}
	\end{subfigure}
	\begin{subfigure}{\textwidth}
	\includegraphics[scale=0.5]{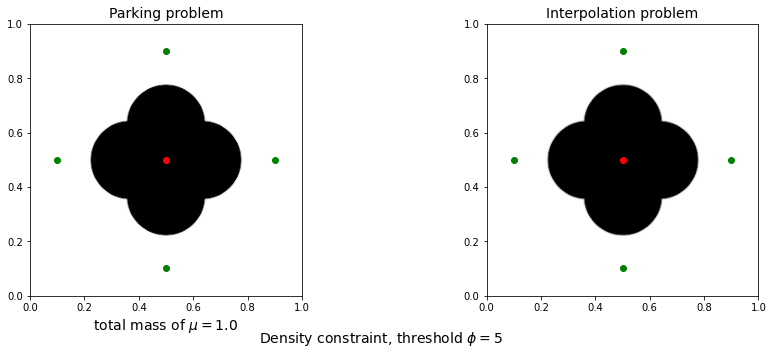}
	\end{subfigure}
	\begin{subfigure}{\textwidth}
	\includegraphics[scale=0.5]{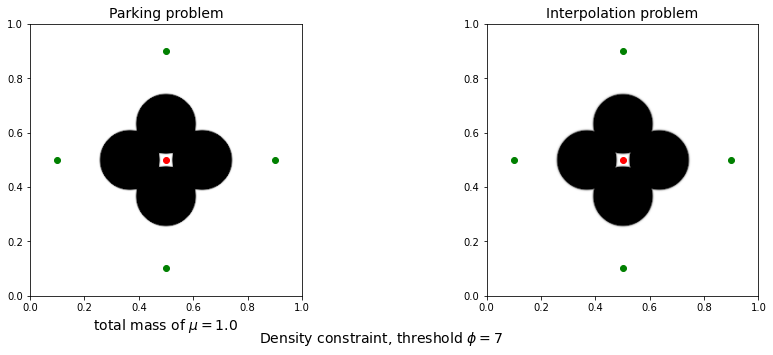}
	\end{subfigure}
	\caption{Convex cost $p=2$}
\end{figure}	

\bigskip

\noindent{\bf Acknowledgments.} The authors are grateful to an anonymous referee for insightful and kind suggestions that directly inspired Lemma \ref{lemmunontrivial}. The first author is member of the Gruppo Nazionale per l'Analisi Matematica, la Probabilit\`a e le loro Applicazioni (GNAMPA) of the Istituto Nazionale di Alta Matematica (INdAM); his work is part of the project 2017TEXA3H {\it``Gradient flows, Optimal Transport and Metric Measure Structures''} funded by the Italian Ministry of Research and University. This work was initiated during a visit of GC and KE at the Dipartimento di Matematica of the University of Pisa, the hospitality of this institution is gratefully acknowledged as well as the support from the Agence Nationale de la Recherche through the project MAGA (ANR-16-CE40-0014). G.C. acknowledges the support of the Lagrange Mathematics and Computing Research Center.
KE acknowledges that this project has received funding from the European Union's Horizon 2020 research and innovation programme under the Marie Sk\l odowska-Curie grant agreement No 754362.
\includegraphics[width=0.6cm]{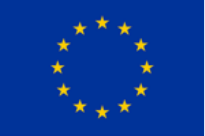}
\newpage
\bibliographystyle{plain}
\bibliography{bibli}

\bigskip
{\small\noindent
Giuseppe Buttazzo:
Dipartimento di Matematica, Universit\`a di Pisa\\
Largo B. Pontecorvo 5, 56127 Pisa - ITALY\\
{\tt giuseppe.buttazzo@unipi.it}\\
{\tt http://www.dm.unipi.it/pages/buttazzo/}

\bigskip\noindent
Guillaume Carlier:
CEREMADE, Universit\'e Paris Dauphine and Inria-Mokaplan, PSL\\
Place du Mar\'echal de Lattre de Tassigny, 75775 Paris Cedex 16 - FRANCE\\
{\tt carlier@ceremade.dauphine.fr}\\
{\tt https://www.ceremade.dauphine.fr/~carlier/}

\bigskip\noindent
Katharina Eichinger:
CEREMADE, Universit\'e Paris Dauphine and Inria-Mokaplan, PSL\\
Place du Mar\'echal de Lattre de Tassigny, 75775 Paris Cedex 16 - FRANCE\\
{\tt eichinger@ceremade.dauphine.fr}

\end{document}